\documentclass[11pt]{amsart}

\usepackage{amsfonts,epsfig}
\usepackage{latexsym}
\usepackage{amssymb}
\usepackage{amsmath}
\usepackage{amsthm}
\usepackage{graphics}
\usepackage[all]{xy}
\usepackage[T2A]{fontenc}
\usepackage{multirow}
\usepackage{array, booktabs, ctable}
\usepackage{hyperref}
\usepackage{color}
\usepackage{mathtools}
\usepackage{enumerate}
\usepackage{url}

\newtheorem{defn}{Definition}[section]

\newcommand{\Mod}[1]{\ (\mathrm{mod}\ #1)}

\usepackage[OT2,T1]{fontenc}
\DeclareSymbolFont{cyrletters}{OT2}{wncyr}{m}{n}
\DeclareMathSymbol{\Sha}{\mathalpha}{cyrletters}{"58}

\newtheorem{corollary}[defn]{Corollary}
\newtheorem{lemma}[defn]{Lemma}

\newtheorem{theorem}[defn]{Theorem}

\newtheorem{proposition}[defn]{Proposition}

\theoremstyle{definition}
\newtheorem*{remark}{Remark}

\newtheorem{conjecture}[defn]{Conjecture}

\newcommand{\lmfdbecx}[3]{\href{https://www.lmfdb.org/EllipticCurve/Q/#1/#2/#3}{#1.#2#3}}

\newcommand{\Q}{\mathbb Q}
\newcommand{\Z}{\mathbb Z}

\newcommand{\Gal}{\operatorname{Gal}}

\newcommand{\ord}{\operatorname{ord}}
\newcommand{\GL}{\operatorname{GL}}
\newcommand{\PGL}{\operatorname{PGL}}

\newcommand{\rk}{\operatorname{rk}}
\newcommand{\Rk}{\operatorname{rk^{an}}}
\newcommand{\Kum}{\mathbb{Q}^\times/(\mathbb{Q}^\times)^2}

\newcommand{\coker}{\operatorname{coker}}

\begin{document}

\bibliographystyle{alphaurl}

\title[Torsion]{Existence and non-existence of rational elliptic curves with prescribed torsion subgroups over quadratic fields}

\author{\"{O}mer Avci}

\address{Dept. of Mathematics and Statistics, University of Ottawa}
\email{oavci@uotttawa.ca} 



\begin{abstract} 
Let $K=\mathbb{Q}(\sqrt{-p})$ be a quadratic field for an odd prime $p$. We show that
there exist infinitely many primes $p$ for which no elliptic curve $E/\mathbb{Q}$
has torsion subgroup $\mathbb{Z}/2\mathbb{Z}\times \mathbb{Z}/2N\mathbb{Z}$ over $K$
for $N=5,6$. We also prove that there exist infinitely many primes $p$ for which
there are infinitely many elliptic curves $E/\mathbb{Q}$ with this torsion
structure, conditional on the parity conjecture. Using these results, we obtain
new torsion classification results over Kummer extensions of cyclotomic fields
and over composites of $\mathbb{Z}_p$-extensions of number fields, refining and
extending our previous work.
\end{abstract}

\maketitle

\section{Introduction and Notation}

Let $K$ denote a number field, and let $E$ be an elliptic curve over $K$. The Mordell-Weil theorem states that the set of $K$-rational points on $E$ forms a finitely generated abelian group. More precisely, denoting $E(K)$ as the set of $K$-rational points on $E$, we have
\begin{equation*}
    E(K) \cong E(K)_{\text{tors}} \oplus \mathbb{Z}^{\rk(E(K))}.
\end{equation*}
Here, $\rk(E(K))$ is a nonnegative integer called the rank, and $E(K)_{\text{tors}}$ is a finite group known as the torsion subgroup of $E$ over $K$.

Classification of torsion subgroups $E(K)_{\mathrm{tors}}$ is an ongoing problem. 
Mazur's celebrated result solved the problem for $K = \Q$, and many results are 
known for other fields. There are several variants of the problem, for instance, 
whether one considers elliptic curves defined over $K$ itself or over $\Q$.  

Chou classified the torsion subgroups $E(\Q^{\mathrm{ab}})_{\mathrm{tors}}$
for elliptic curves $E/\Q$ \cite{Chou}.
We studied the torsion subgroups
$E(\Q(\zeta_p))_{\mathrm{tors}}$ for all primes $p$, obtaining stronger results
for certain families of primes \cite{Omer}. At the time, our sharpest classification result
was the following theorem.

\begin{theorem}[A., Theorem 1.1, \cite{Omer}]
     Let $E/\mathbb{Q}$ be an elliptic curve, and let $p > 3$ be a prime such that $p - 1$ is not divisible by $3$, $4$, or $5$. Then $E(\mathbb{Q}(\zeta_p))_{\text{tors}}$ is either one of the groups from Mazur’s theorem, or one of the following groups:
  \begin{equation*}
    \mathbb{Z}/2\mathbb{Z} \times \mathbb{Z}/10\mathbb{Z}, \quad 
    \mathbb{Z}/2\mathbb{Z} \times \mathbb{Z}/12\mathbb{Z}, \quad 
    \text{or} \quad \mathbb{Z}/16\mathbb{Z}.
 \end{equation*}
 Moreover,  $E(\mathbb{Q}(\zeta_p))_{\text{tors}} = E(\mathbb{Q}(\sqrt{-p}))_{\text{tors}}$ holds.
\end{theorem}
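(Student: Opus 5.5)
We work throughout in the Galois-theoretic picture of $\mathbb{Q}(\zeta_p)$. Its Galois group is $G=(\mathbb{Z}/p\mathbb{Z})^\times$, which is cyclic of order $p-1$. The hypothesis that $3,4,5\nmid p-1$ means $p\equiv 3 \pmod 4$ and $p-1=2m$ with $m$ odd and $\gcd(m,15)=1$. Every subfield of $\mathbb{Q}(\zeta_p)$ therefore has degree dividing $2m$, and the unique quadratic subfield is $\mathbb{Q}(\sqrt{-p})$ (since $p\equiv 3\pmod 4$). We reduce the classification over $\mathbb{Q}(\zeta_p)$ to one over $\mathbb{Q}(\sqrt{-p})$, and then invoke the known classification of torsion of rational elliptic curves over quadratic fields (Najman).

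\textbf{Step 1: the field of definition of torsion points has degree dividing $2$.} Let $P\in E(\mathbb{Q}(\zeta_p))$ be a point of prime-power order $\ell^k$. The field $\mathbb{Q}(P)$ is abelian over $\mathbb{Q}$, and its degree divides $p-1=2m$. We compare this with the known constraints on degrees of fields of definition of torsion points of $E/\mathbb{Q}$ in abelian extensions, using Chou's classification of $E(\mathbb{Q}^{\mathrm{ab}})_{\mathrm{tors}}$ and the results on images of the mod $\ell^k$ representations.
\begin{itemize}
\item For $\ell\ge 5$, the Weil pairing forces $\mathbb{Q}(\zeta_\ell)\subset\mathbb{Q}(E[\ell])$. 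If the point generates an abelian extension, the isogeny character has image of order dividing $\ell-1$ in the Borel case. Chou's list (torsion in $\mathbb{Q}^{\mathrm{ab}}$ has $\ell\in\{2,3,5,7,11,13,17,19,37,43,67,163\}$) together with the known degrees of points for the exceptional isogenies shows that the required degrees are divisible by $3$, $4$ or $5$. The main sources are the characters of order $(\ell-1)$ or $(\ell-1)/2$ and the divisibility of $\ell-1$ by $3$, $4$ or $5$ for these $\ell$. Such degrees are excluded, with the exception of degree $\le 2$ for $\ell=5,7$, which the quadratic classification handles.
\item For $\ell=2,3$ we argue with the images in $\GL_2(\mathbb{Z}/\ell^k)$. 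Any abelian quotient generated by a torsion point has exponent dividing $2$ times a power of $2$ or $3$. Since $\gcd(|G|,12)=2$, the Galois action factors through a quotient of order $\le 2$.
\end{itemize}
In all cases $\mathbb{Q}(P)\subseteq \mathbb{Q}(\sqrt{-p})$, because any element of $G$ of odd order acts trivially on $P$: the image of $G$ in $\mathrm{Aut}(\langle P\rangle)$ has order dividing $\gcd(2m,\cdot)$, which is forced to be $\le 2$. This gives the "moreover" statement
\[
E(\mathbb{Q}(\zeta_p))_{\mathrm{tors}}=E(\mathbb{Q}(\sqrt{-p}))_{\mathrm{tors}}.
\]

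\textbf{Step 2: apply Najman's list.} By Najman's classification, $E(\mathbb{Q}(\sqrt{-p}))_{\mathrm{tors}}$ is one of Mazur's groups, or one of
\[
\mathbb{Z}/15,\quad \mathbb{Z}/16,\quad \mathbb{Z}/2\times\mathbb{Z}/10,\quad \mathbb{Z}/2\times\mathbb{Z}/12,\quad \mathbb{Z}/3\times\mathbb{Z}/3,\quad \mathbb{Z}/3\times\mathbb{Z}/6,\quad \mathbb{Z}/4\times\mathbb{Z}/4.
\]
We eliminate the extra cases as follows.
\begin{itemize}
\item $\mathbb{Z}/3\times\mathbb{Z}/3$ and $\mathbb{Z}/3\times\mathbb{Z}/6$ require $\zeta_3$, i.e.\ $K=\mathbb{Q}(\sqrt{-3})$, which would force $p=3$; this is excluded.
\item $\mathbb{Z}/4\times\mathbb{Z}/4$ requires $K=\mathbb{Q}(i)$, which is impossible since $p>3$.
\item $\mathbb{Z}/15$ occurs only over $\mathbb{Q}(\sqrt5)$ and $\mathbb{Q}(\sqrt{-15})$, and neither of these is $\mathbb{Q}(\sqrt{-p})$ with $p>3$ prime.
\end{itemize}
What remains is exactly Mazur's list together with $\mathbb{Z}/2\times\mathbb{Z}/10$, $\mathbb{Z}/2\times\mathbb{Z}/12$ and $\mathbb{Z}/16$.

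\textbf{Main obstacle.} The hard part is Step 1, the uniform argument that no torsion point is defined over a subfield of degree $>2$. This needs careful case-by-case control of the abelian images of the $\ell$-adic representations, especially for $\ell=2$ (cyclic $2$-power torsion growing by odd-degree extensions) and for the CM and exceptional isogeny primes. I would first try to prove the cleaner statement that for $E/\mathbb{Q}$ and a cyclic extension $L/\mathbb{Q}$ of odd degree $d$ with $\gcd(d,15)=1$, we have $E(L)_{\mathrm{tors}}=E(\mathbb{Q})_{\mathrm{tors}}$. I would prove this using the known results on torsion growth over cubic and odd-degree fields (González-Jiménez, Najman), which say torsion grows only in degrees divisible by $3,5,7,\dots$ via specific isogenies. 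For the primes $7$, $13$, etc., an extra check is needed that such growth requires $3\mid d$. Applying this statement to $\mathbb{Q}(\zeta_p)/\mathbb{Q}(\sqrt{-p})$, after twisting down to $\mathbb{Q}$ via the quadratic twist $E^{(-p)}$, gives Step 1.
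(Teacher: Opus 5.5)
Your argument is correct in outline, but it is organized differently from the method the paper uses. The paper does not reprove this theorem; it quotes it from \cite{Omer}. For results of this kind (see the proof of Theorem \ref{4exceptions}) the paper starts from Chou's finite list of torsion groups over abelian fields and removes candidates one at a time. It uses the Weil pairing for full $m$-torsion with $m\ge 3$. It uses fields of definition of isogeny kernels for $\Z/15\Z$, $\Z/21\Z$, $\Z/27\Z$ and the exceptional $\Z/\ell\Z$ (Theorem \ref{classificationtheoremmain}). It uses Lemma \ref{upgradedfourpointpairing} for $\Z/13\Z$ and $\Z/25\Z$, and the absence of cubic subfields for $\Z/18\Z$, $\Z/2\Z\times\Z/14\Z$ and $\Z/2\Z\times\Z/18\Z$. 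Its tools for descending torsion to a quadratic subfield are Lemma \ref{upgradedfourpointpairing}(2) on odd parts and an order count in $\GL_2(\Z/2^k\Z)$ on the $2$-part, as in the proof of Theorem \ref{metabelian}.

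You reverse the logic. You prove the descent to $\Q(\sqrt{-p})$ first, with essentially those same two tools. Then you let Najman's classification over quadratic fields do all the elimination in one step, including his field-specific result that $\Z/15\Z$ occurs only over $\Q(\sqrt5)$ and $\Q(\sqrt{-15})$. This is shorter here because $\Gal(\Q(\zeta_p)/\Q)$ is cyclic with a unique quadratic subfield. The paper's group-by-group elimination is what carries over to composite $n$ and to the infinite fields of Section \ref{torsionsection}. There torsion need not descend to a single quadratic field (e.g.\ $\Z/2\Z\times\Z/16\Z$ over biquadratic fields), so Najman's list does not apply.

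When you write Step 1 out, make three points explicit.

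First, for odd $\ell$ the $\ell$-primary torsion over $\Q(\zeta_p)$ is cyclic. No admissible $p$ (i.e.\ $p\equiv 11\pmod{12}$, $p\not\equiv 1\pmod 5$; the smallest is $23$) is one of Chou's torsion primes. Hence $\ell\ne p$, so $\zeta_\ell\notin\Q(\zeta_p)$ and $\langle P\rangle$ is Galois-stable. Galois then acts through a character of order dividing $\gcd(p-1,\phi(\ell^k))$.

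Second, this gcd equals $2$ for every $\ell^k$ allowed by Chou's list except $43$ and $67$, where it is \emph{not} forced to be $\le 2$. For example, $\gcd(22,66)=22$ for $p=23$, and $\gcd(238,42)=14$ for $p=239$. Here you need the isogeny-field input behind Theorem \ref{classificationtheoremmain}(vii). The only curves with a rational $43$- or $67$-isogeny have CM by the maximal order of $\Q(\sqrt{-\ell})$. The square of the kernel character is $\omega^{(\ell+1)/2}$, where $\omega$ is the mod-$\ell$ cyclotomic character, and this has order $(\ell-1)/2\in\{21,33\}$. So the character has order divisible by $3$ and cannot factor through $\Gal(\Q(\zeta_p)/\Q)$.

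Third, for $\ell=2,3$ phrase the argument as a degree count: $[\Q(E[\ell^k])\cap\Q(\zeta_p):\Q]$ divides $\gcd\bigl(|\GL_2(\Z/\ell^k\Z)|,\,p-1\bigr)=2$. This holds because $|\GL_2(\Z/\ell^k\Z)|$ is $3\cdot 2^{4k-3}$ or $2^4\cdot 3^{4k-3}$.

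With these, Step 1 is complete. The detour through Gonz\'alez-Jim\'enez--Najman results on odd-degree growth in your last paragraph is then unnecessary.
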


In the same paper, we provided explicit examples realizing
$\Z/2\Z \times \Z/10\Z$ for $p = 59$ and $\Z/2\Z \times \Z/12\Z$ for $p = 47$.
For the case $\Z/16\Z$, we could not find any examples, and were unable to 
show that it is not realized for any prime $p$. In June 2025, Derickx \cite[Theorem 5.2]{derickx16} proved
that 
\begin{equation*}
    E(\Q(\sqrt{-p}))_{\text{tors}}\cong \Z/16\Z
\end{equation*}
 is not possible for any prime $p$ and elliptic curve $E/\Q$. This shows that, excluding $\Z/16\Z$, the statement of our theorem is best possible. 

 The question of determining for which primes $p$
 the groups $\Z/2\Z \times \Z/2N\Z$
 are realized for $N=5,6$
 remains open. In this paper, we prove that for certain families of primes there are no rational elliptic curves with the desired torsion, and for certain families of primes we conjecture, and prove conditionally on the finiteness of the $2$-primary part of the Tate–Shafarevich group of rational elliptic curves, that there exist infinitely many rational elliptic curves with the desired torsion. Our main results are as follows.
\begin{theorem}
      Let $p\equiv 3,7\pmod{20}$ be a prime. Then there does not exist an elliptic curve $E/\Q$ such that
      \begin{equation*}
          E(\mathbb{Q}(\sqrt{-p}))_{\mathrm{tors}} \cong \Z/2\Z \times \Z/10\Z
      \end{equation*}
      Moreover, $E(\Q(\zeta_p))_{\text{tors}}$ is isomorphic to one of the groups in Mazur's theorem or $\Z/2\Z \times \Z/12\Z$.
\end{theorem}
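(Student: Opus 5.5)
The plan is to reduce to a statement about rational torsion on $E$ or its quadratic twist, using the standard decomposition of torsion over a quadratic field. Write $K=\Q(\sqrt{-p})$ and let $E^{-p}$ be the quadratic twist of $E$ by $-p$. For odd $n$ we have $E(K)[n]\cong E(\Q)[n]\oplus E^{-p}(\Q)[n]$. So if $\Z/2\Z\times\Z/10\Z\subseteq E(K)$, then $E(\Q)$ or $E^{-p}(\Q)$ has a point of order $5$. Since $E$ and $E^{-p}$ have the same $2$-division field, and both are twists of each other over $K$, we may replace $E$ by the twist and assume $E(\Q)$ contains a point of order $5$.

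Next we analyse the $2$-torsion. We need full $2$-torsion over $K$, while by Mazur $E(\Q)\not\supseteq \Z/2\Z\times\Z/10\Z$. Hence $E$ has exactly one rational $2$-torsion point, so $E(\Q)_{\mathrm{tors}}\cong\Z/10\Z$, and its $2$-division field is exactly $\Q(\sqrt{-p})$.

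\textbf{Step 1 (parametrization).} Use Kubert's family of curves $E_t/\Q$ with a rational point of order $10$, $t\in\Q$. Write $E_t$ as $y^2=x(x^2+ax+b)$ with $a,b$ explicit in $t$. The $2$-division field is then $\Q(\sqrt{a^2-4b})$. I expect $a^2-4b$ to factor, up to squares, as a constant times a polynomial such as $4t^2-2t-1$ (or a similar quadratic). The key point is that this quadratic has discriminant $20$, i.e.\ it is a norm form from $\Q(\sqrt5)$ up to a square factor.

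\textbf{Step 2 (arithmetic of the norm form).} Write $t=m/n$ in lowest terms and clear denominators, obtaining $D(m,n)$ as an integral binary quadratic form of discriminant $20$, up to squares. Any odd prime $q\neq 5$ dividing $D(m,n)$ to an odd power, with $\gcd(m,n)=1$, must have $5$ a square mod $q$, i.e.\ $q\equiv\pm1\pmod 5$. Combined with the sign of the form and its behaviour at $2$, this yields a congruence condition on the squarefree part.

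\textbf{Step 3 (conclusion).} Suppose the squarefree part equals $-p$. Then $p\equiv\pm1\pmod 5$, or $p=5$, or the $2$-adic or sign condition fails. But $p\equiv 3,7\pmod{20}$ gives $p\equiv 2,3\pmod 5$, a contradiction. Here the condition $p\equiv 3\pmod 4$, which makes $-p\equiv1\pmod 4$ and $K$ of discriminant $-p$, is presumably needed to kill the $2$-adic and sign alternatives. The main obstacle is getting the exact square class of $a^2-4b$ in Kubert's family and handling the primes $2$ and $5$ carefully. I would first compute this directly from the Tate normal form and check it numerically against the $p=59$ example, where $59\equiv -1\pmod 5$ is consistent.

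For the ``moreover'' part, combine the above with the classification of $E(\Q(\zeta_p))_{\mathrm{tors}}$ for general $p$ from \cite{Omer}. Note that $p\equiv 3\pmod 4$ and $p\not\equiv1\pmod 5$ exclude the cases requiring $4\mid p-1$ or $5\mid p-1$. Also use Derickx's result to rule out $\Z/16\Z$. We still need to check, via the torsion-growth results of \cite{Omer} for $3\mid p-1$ (e.g.\ $p=7$), that no additional groups arise over the cubic subfield when $3\mid p-1$. I expect this last check to be the delicate part of the second claim.
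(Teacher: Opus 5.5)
Your reduction is sound: after twisting by $-p$ you may assume $E(\Q)_{\mathrm{tors}}\cong\Z/10\Z$ and $\Q(E[2])=\Q(\sqrt{-p})$. Everything then rests on the square class of the $2$-division discriminant along Kubert's $\Z/10\Z$ family, and that is where Steps~1--3 fail. In the parametrization the paper quotes from Jeon--Kim--Lee, $b=t^3(2t-1)(t-1)/(t^2-3t+1)^2$ and $c=-t(2t-1)(t-1)/(t^2-3t+1)$, this square class is not a quadratic. It is the cubic $(2t-1)(4t^2-2t-1)=8t^3-8t^2+1$, and this is forced. A quadratic would make the double cover $X_1(2,10)\to X_1(10)\cong\mathbb{P}^1$ a conic, whereas $X_1(2,10)$ has genus $1$ and so needs four branch points: $t=\tfrac{1}{2}$, the two roots of $4t^2-2t-1$, and $t=\infty$. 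The extra linear factor destroys Step~2. With $t=m/n$ the square class is $n(2m-n)(4m^2-2mn-n^2)$, and primes dividing $n$ or $2m-n$ are unconstrained. Concretely, $t=2$ gives $b=24$, $c=6$ and $2$-division polynomial $(x-8)(4x^2-39x-72)$, whose quadratic factor has discriminant $81\cdot 33$. Similarly $t=-1$ gives $\Q(\sqrt{-15})$. So $\Z/2\Z\times\Z/10\Z$ occurs over $\Q(\sqrt{33})$ and $\Q(\sqrt{-15})$ even though $3\not\equiv\pm1\pmod 5$. All your congruence argument yields is $p\nmid 4m^2-2mn-n^2$, i.e.\ $p$ divides $n$ or $2m-n$, which is no contradiction.

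\emph{The missing idea.} With $x=2t-1$, the condition $8t^3-8t^2+1\in -p(\Q^\times)^2$ says exactly that the $(-p)$-twist of $X_1(2,10)$, namely $-py^2=x^3+x^2-x$, has a rational point with $x\neq0$. Excluding this is a rank-zero statement about a genus-one curve, and no congruence on the discriminant can deliver it. If you write $x=X/Z$ and push your coprime factorization through, you land on $u^4-pu^2v^2-p^2v^4=w^2$. That equation has the solution $v=0$, coming from $O$ and $(0,0)$, so it cannot be killed locally.

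Your mod-$5$ observation matches one half of the paper's $2$-isogeny descent (Proposition~\ref{rank10}), the half showing $\varphi'(E_{-p}(\Q))=\{\overline{1},\overline{-1}\}$. The missing half is the descent on the isogenous curve $y^2=x(x^2+2px+5p^2)$. There the classes $p$ and $5p$ are excluded because congruences such as $(l^2+m^2)^2+4m^4\equiv0\pmod p$ force $p\mid l,m$ when $p\equiv3\pmod4$. That, not a $2$-adic or sign condition, is how $p\equiv3\pmod 4$ enters. Once the rank is $0$, the paper concludes with Theorem~\ref{noncuspidal}. In your parametric setup you could instead check that the twist's only rational points are $O$ and $(0,0)$, which correspond to the degenerate values $t=\infty,\tfrac{1}{2}$. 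So your route can avoid Najman--Kamienny, but not the descent.

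For the second assertion, your plan is what the paper implicitly uses: \cite[Theorem~1.1]{Omer}, the first part, and Derickx's exclusion of $\Z/16\Z$. However, \cite[Theorem~1.1]{Omer} assumes $p>3$ and $3\nmid p-1$, which $p\equiv3,7\pmod{20}$ does not give, and the check you flagged actually fails.

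For $p=7$, take the CM curve $y^2+xy=x^3-x^2-2x-1$, with $j=-3375$ and $\Delta=-7^3$. Its $2$-division polynomial is $(x-2)(4x^2+5x+2)$, so its full $2$-torsion is defined over $\Q(\sqrt{-7})\subset\Q(\zeta_7)$. The kernel of its rational $7$-isogeny carries a character unramified outside $7$ of order dividing $6$, so it is pointwise defined over $\Q(\zeta_7)$. Hence $E(\Q(\zeta_7))_{\mathrm{tors}}\supseteq\Z/2\Z\times\Z/14\Z$. For $p=3$, the curve $y^2=x^3+16$ has full $3$-torsion over $\Q(\zeta_3)$.

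So the second assertion needs the extra hypothesis $p\equiv2\pmod 3$ (i.e.\ $p\equiv23,47\pmod{60}$). Under that hypothesis your argument goes through as written.
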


\begin{theorem}
     Let $p\equiv 11\pmod{24}$ be a prime. Then there does not exist an elliptic curve $E/\Q$ such that
     \begin{equation*}
         E(\mathbb{Q}(\sqrt{-p}))_{\mathrm{tors}} \cong \Z/2\Z \times \Z/12\Z.
     \end{equation*}
      Moreover, $E(\Q(\zeta_p))_{\text{tors}}$ is isomorphic to one of the groups in Mazur's theorem or $\Z/2\Z \times \Z/10\Z$.
\end{theorem}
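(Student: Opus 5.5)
The plan is to reduce the non-existence claim to the known classification of torsion growth of rational elliptic curves over quadratic fields, and then to a local obstruction on the square class of $-p$.

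First, I will recall the standard decomposition. Let $K=\Q(\sqrt{d})$ and let $E^d$ be the quadratic twist. For odd $n$,
\begin{equation*}
E(K)[n]\cong E(\Q)[n]\oplus E^d(\Q)[n].
\end{equation*}
The $2$-primary part is governed by the $2$-division polynomial and by halving conditions. The classification of González-Jiménez and Tornero (equivalently, Najman's results) of the pairs $\bigl(E(\Q)_{\mathrm{tors}},E(K)_{\mathrm{tors}}\bigr)$ shows that $E(K)_{\mathrm{tors}}\cong\Z/2\Z\times\Z/12\Z$ can only arise in a short list of ways:
\begin{enumerate}[(a)]
\item $E(\Q)_{\mathrm{tors}}\cong\Z/12\Z$, and $K$ is the field of definition of the remaining $2$-torsion, i.e. $d\equiv\Delta_E$ modulo squares;
\item $E(\Q)_{\mathrm{tors}}\cong\Z/2\Z\times\Z/6\Z$, and a $2$-torsion point becomes halvable over $K$;
\item $E(\Q)_{\mathrm{tors}}$ is $\Z/2\Z\times\Z/4\Z$ or $\Z/2\Z\times\Z/2\Z$ (or similar), and the $3$-torsion comes from a twist $E^d$ with $E^d(\Q)[3]\neq 0$.
\end{enumerate}
In each case I will use the Kubert/Tate-normal-form parametrization of the relevant modular curve, with parameter $t\in\Q$. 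This makes the square class of $d$ an explicit polynomial or rational function $f(t)$, so that $d\equiv f(t)\pmod{(\Q^\times)^2}$. For instance:
\begin{itemize}
\item in case (a), $f(t)$ is the discriminant of the $\Z/12\Z$ family modulo squares;
\item in case (b), for $y^2=x(x-a)(x-b)$, halving $(0,0)$ over $K$ forces $ab$ to be a square up to $d$, together with compatible conditions on $-a$ and $-b$, and this yields an explicit $f(t)$;
\item in case (c), $d$ must simultaneously be the square class making $E^d$ acquire a $3$-torsion point while preserving the $2$-part, which is a fibre-product condition that cuts down to an explicit curve.
\end{itemize}

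The main step is then to show that for $p\equiv 11\pmod{24}$ the equation $-p\,s^2=f(t)$ has no rational solutions, in every case. I will treat each resulting genus-$0$ (or low-genus) curve by local analysis. The hypothesis $p\equiv 11\pmod{24}$ is exactly $p\equiv 3\pmod 8$ and $p\equiv 2\pmod 3$. I will check solvability over $\Q_2$ and $\Q_3$ (and possibly $\Q_p$ via Legendre symbols of small primes) by case analysis on the valuations of $t$. The expectation is that $-p\equiv 5\pmod 8$ and $-p\equiv 1\pmod 3$ conflict with the $2$-adic and $3$-adic square classes that $f(t)$ can take. Concretely, I expect $f(t)$ to factor as a product of norm forms like $t^2+3$, $t^2-3$, $t^2\pm 2$, or their homogenizations. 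Then Hilbert symbol computations such as $(-p,-3)_3=-1$ or $(-p,2)_2=-1$ should give the contradiction.

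This local-obstruction step is where I expect the real difficulty. Some cases may yield a curve with points everywhere locally for these $p$. If so, I will first try a descent argument: factor $f(t)$ and use coprimality of the factors to reduce to a system of norm equations, then derive a global contradiction from the primes dividing $p$.

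For the ``moreover'' statement, $p\equiv 11\pmod{24}$ gives $p-1\equiv 10\pmod{24}$, so $3\nmid p-1$ and $4\nmid p-1$. Where $5\nmid p-1$ as well, Theorem 1.1 of \cite{Omer} shows that $E(\Q(\zeta_p))_{\mathrm{tors}}=E(\Q(\sqrt{-p}))_{\mathrm{tors}}$ lies in Mazur's list, or is $\Z/2\Z\times\Z/10\Z$, $\Z/2\Z\times\Z/12\Z$ or $\Z/16\Z$. The $\Z/16\Z$ case is excluded by Derickx \cite[Theorem 5.2]{derickx16}, and $\Z/2\Z\times\Z/12\Z$ is excluded by what we just proved. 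When $5\mid p-1$, I would instead invoke the more general classification of $E(\Q(\zeta_p))_{\mathrm{tors}}$ from \cite{Omer}. There I would check that the additional groups allowed there cannot occur under $3,4\nmid p-1$, or reduce them again to the quadratic subfield $\Q(\sqrt{-p})$.
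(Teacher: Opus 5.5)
\textbf{The main step.} The step you single out as the main one cannot work, because there is no local obstruction to find. Every one of your cases produces a $\Q(\sqrt{-p})$-point on the genus-$1$ modular curve $X_1(2,12)$. Your curves $-ps^2=f(t)$ are therefore (covers of) twists of it, not genus-$0$ curves.

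Concretely, in your case (b) the standard parametrization (Jeon--Kim--Lee, as in Proposition~\ref{infinitely12}) gives $d\equiv (t^2-1)(t^2+3)$ modulo squares. This is exactly the kind of product of norm forms you anticipate. The curve $-ps^2=(t^2-1)(t^2+3)$ is a model of $X_1(2,12)_{-p}\colon y^2=x(x^2+px+p^2)$, via $x=-p(t+1)/(t-1)$. It contains the rational points $(t,s)=(\pm1,0)$, which go to $O$ and $(0,0)$, i.e.\ to cusps. A smooth curve with a rational point has non-degenerate points over every $\Q_\ell$. So no $2$-adic, $3$-adic or Hilbert-symbol computation can exclude admissible $t$. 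Incidentally, $-p\equiv 1\pmod 3$ is a square in $\Q_3$, so $(-p,-3)_3=+1$, not $-1$.

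Non-existence is a global statement: the twist must have rank $0$, and all of its rational torsion points must be cusps. The paper gets the second fact uniformly from Theorem~\ref{noncuspidal}: non-cuspidal quadratic points of $X_1(2,12)$ have infinite order. It therefore only needs $\rk X_1(2,12)_{-p}(\Q)=0$. Proposition~\ref{rank12} proves this by $2$-isogeny descent between $y^2=x(x^2+px+p^2)$ and $y^2=x(x^2-2px-3p^2)$. The nontrivial $2$-coverings are excluded because $-3$ and $2$ are non-residues modulo $p$. This settles all of your cases (a)--(c), and any you have not listed, at once, without the Gonz\'alez-Jim\'enez--Tornero classification. Your fallback ``descent via norm equations'' points in this direction, but as written it is not yet an argument.

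\textbf{The ``moreover'' clause.} When $5\nmid p-1$, your argument (Theorem~1.1 of \cite{Omer}, \cite[Theorem 5.2]{derickx16}, and the first part) is exactly what the paper relies on. Your plan for $5\mid p-1$, however, cannot succeed, because the clause fails there. The prime $p=11$ satisfies $p\equiv 11\pmod{24}$. Take $E/\Q$ of conductor $121$ with a rational $11$-isogeny, e.g.\ the CM curve $y^2+y=x^3-x^2-7x+10$ with $j=-2^{15}$. The character by which $\Gal(\overline{\Q}/\Q)$ acts on the isogeny kernel is unramified outside $11$ and has order dividing $10$. Hence it factors through $\Gal(\Q(\zeta_{11})/\Q)$. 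So $E(\Q(\zeta_{11}))$ contains a point of order $11$, which is incompatible with both Mazur's list and $\Z/2\Z\times\Z/10\Z$. The clause needs the extra hypothesis $p\not\equiv 1\pmod 5$. The paper's appeal to Theorem~1.1 of \cite{Omer} silently requires this hypothesis as well.
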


\begin{corollary}
       Let $p\equiv 83,107\pmod{120}$ be a prime. Then there does not exist an elliptic curve $E/\Q$ such that
       \begin{equation*}
           E(\mathbb{Q}(\sqrt{-p}))_{\mathrm{tors}} \cong \Z/2\Z \times \Z/N\Z
       \end{equation*}
       for $N=5$ or $N=6$. Moreover, $E(\Q(\zeta_p))_{\text{tors}}$ is isomorphic to one of the groups in Mazur's theorem.
\end{corollary}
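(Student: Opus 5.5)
The corollary should follow by intersecting the congruence conditions of the two preceding theorems and then invoking the classification theorem from \cite{Omer} together with Derickx's result. (I read the displayed groups as $\Z/2\Z \times \Z/2N\Z$ for $N=5,6$, i.e.\ $\Z/2\Z\times\Z/10\Z$ and $\Z/2\Z\times\Z/12\Z$, matching the two theorems.)

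The first step is the congruence bookkeeping. Let $p \equiv 83$ or $107 \pmod{120}$.
\begin{itemize}
\item Reducing modulo $20$: $83 \equiv 3$ and $107 \equiv 7$. So $p \equiv 3,7 \pmod{20}$ and the $N=5$ theorem applies.
\item Reducing modulo $24$: $83 = 72+11 \equiv 11$ and $107 = 96+11 \equiv 11$. So $p\equiv 11 \pmod{24}$ and the $N=6$ theorem applies.
\end{itemize}
Since $\mathrm{lcm}(20,24)=120$, these residue classes are exactly the solutions of both conditions. Hence neither $\Z/2\Z\times\Z/10\Z$ nor $\Z/2\Z\times\Z/12\Z$ occurs as $E(\Q(\sqrt{-p}))_{\mathrm{tors}}$ for any $E/\Q$. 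This gives the first assertion.

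For the ``moreover'' part, I would check that such $p$ satisfy the hypotheses of Theorem 1.1 of \cite{Omer}: $p>3$ and $p-1$ not divisible by $3$, $4$ or $5$.
\begin{itemize}
\item Here $p-1 \equiv 82$ or $106 \pmod{120}$.
\item Modulo $3$: $82\equiv 1$ and $106\equiv 1$.
\item Modulo $4$: $82\equiv 2$ and $106\equiv 2$.
\item Modulo $5$: $82\equiv 2$ and $106\equiv 1$.
\end{itemize}
So none of $3$, $4$, $5$ divides $p-1$. Alternatively, the two preceding theorems' ``moreover'' clauses already restrict $E(\Q(\zeta_p))_{\mathrm{tors}}$, and intersecting them gives the same conclusion.

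Theorem 1.1 of \cite{Omer} then says $E(\Q(\zeta_p))_{\mathrm{tors}} = E(\Q(\sqrt{-p}))_{\mathrm{tors}}$, and that this group is a Mazur group or one of $\Z/2\Z\times\Z/10\Z$, $\Z/2\Z\times\Z/12\Z$, $\Z/16\Z$. Each exceptional group is excluded:
\begin{itemize}
\item $\Z/16\Z$ is excluded by Derickx \cite[Theorem 5.2]{derickx16}.
\item $\Z/2\Z\times\Z/10\Z$ and $\Z/2\Z\times\Z/12\Z$ are excluded by the first step, using the equality of the two torsion groups.
\end{itemize}
Only Mazur's groups remain.

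There is no real obstacle here. The only points needing care are that the residue classes are computed correctly and that the equality of torsion over $\Q(\zeta_p)$ and $\Q(\sqrt{-p})$ is used to transfer the non-existence statement.
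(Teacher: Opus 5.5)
Your proposal is correct and follows the same route by which the paper obtains this corollary. Intersecting $p\equiv 3,7\pmod{20}$ with $p\equiv 11\pmod{24}$ gives exactly $p\equiv 83,107\pmod{120}$; Corollaries~\ref{prime10} and~\ref{prime12} then exclude $\Z/2\Z\times\Z/10\Z$ and $\Z/2\Z\times\Z/12\Z$ over $\Q(\sqrt{-p})$; and Theorem~1.1 of \cite{Omer} (whose hypotheses you verify correctly), together with \cite[Theorem 5.2]{derickx16}, carries this over to $\Q(\zeta_p)$.

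Your reading of the groups as $\Z/2\Z\times\Z/2N\Z$ is the intended one. Taken literally, $\Z/2\Z\times\Z/5\Z\cong\Z/10\Z$ and $\Z/2\Z\times\Z/6\Z$ are Mazur groups, and the claim would be false. Your direct check of the hypotheses of Theorem~1.1 of \cite{Omer} is also safer than your suggested alternative of intersecting the ``moreover'' clauses of the two theorems. That theorem does not cover every prime in those families, e.g.\ $p=11$, where $5\mid p-1$.
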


From this corollary, we obtain an infinite family of primes for which the
possible groups appearing as $E(\Q(\zeta_p))_{\mathrm{tors}}$ are exactly the $15$
groups listed in Mazur's theorem. Moreover, thanks to the work of Guzvić and Krijan,
this also allows us to classify $E(\Q(\mu_{p^\infty}))_{\mathrm{tors}}$, since they
proved that for all primes $p>3$,
\begin{equation*}
E(\Q(\mu_{p^\infty}))_{\mathrm{tors}} = E(\Q(\zeta_p))_{\mathrm{tors}}.
\end{equation*}

We also leave the following questions open:

\begin{conjecture}\label{con10}
     Let $p\equiv 11,19\pmod{20}$ be a prime. Then, there exists infinitely many elliptic curves $E/\Q$, satisfying 
     \begin{equation*}
         E(\mathbb{Q}(\sqrt{-p}))_{\mathrm{tors}} \cong \Z/2\Z \times \Z/10\Z.
     \end{equation*}
\end{conjecture}

\begin{conjecture}\label{con12}
      Let $p\equiv -1\pmod{24}$ be a prime. Then, there exists infinitely many elliptic curves $E/\Q$, satisfying 
      \begin{equation*}
          E(\mathbb{Q}(\sqrt{-p}))_{\mathrm{tors}} \cong \Z/2\Z \times \Z/12\Z.
      \end{equation*}
\end{conjecture}

The classification of torsion subgroups $E(K)_{\text{tors}}$ for elliptic curves 
defined over quadratic fields $K$ was completed by Kamienny \cite{kamienny} 
and by Kenku and Momose \cite{kenkumomose}. In addition, the classification 
of possible torsion subgroups $E(K)_{\text{tors}}$ arising from elliptic curves 
$E/\mathbb{Q}$ was carried out by Najman \cite{NajmanQuadratic}. In these 
classifications, one encounters torsion groups that do not appear in Mazur's 
theorem. However, although it is known that such groups are realized over some
quadratic fields, a complete description of which quadratic fields admit which 
torsion structures is still unknown.

This problem has attracted the attention of several authors in recent years. Notably, Banwait and Derickx studied related questions \cite{bander}, and we investigated further aspects \cite{Omer,Omer2}. As another example, Ejder \cite{ejder} generalized Fujita’s \cite{Fujita} classification of torsion subgroups over the maximal quadratic extension of $\mathbb{Q}$ to the cases of $\mathbb{Q}(\sqrt{-1})$ and $\mathbb{Q}(\sqrt{-3})$, relying on the fact that the classification of torsion over cyclotomic quadratic fields had previously been completed by Najman \cite{Najmancyclo}. Extending such classifications to more general quadratic fields may therefore lead to further generalizations of Ejder’s results.

In Section \ref{descentsection}, we give a brief review of the $2$-descent 
method and of results related to quadratic points on the modular curves 
$X_1(2,10)$ and $X_1(2,12)$. These two modular curves have genus $1$, and hence 
are elliptic curves themselves. We study the ranks of their quadratic twists 
and identify infinite families of quadratic twists for which the rank is zero, 
which implies that there are no rational elliptic curves with the desired 
torsion over the corresponding quadratic fields. Finally, we explain how to 
explicitly construct rational elliptic curves with the desired torsion over
quadratic fields for which the rank is positive.

In Section \ref{conditionalsection}, we discuss a result of Monsky \cite{Monsky},
which proves the parity conjecture for rational elliptic curves, conditional on
the finiteness of the $2$-primary part of the Tate-Shafarevich group. We then 
prove our conjectures conditionally on the parity conjecture, or under the
assumption that the $2$-primary part of $\Sha$ is finite, an assumption which
implies the parity conjecture for rational elliptic curves.

In Section \ref{eliminationsection}, we refine and extend several of the
classification results obtained earlier in \cite{Omer}.

Finally, in Section \ref{torsionsection}, we establish new classification 
results related to the problems studied earlier \cite{Omer,Omer2}, using 
the results proved in Section \ref{descentsection} and the methods developed 
and refined in Section \ref{eliminationsection}, together with recent results 
from the literature.

Throughout this paper, elliptic curves are identified using their LMFDB
labels.

In several parts of our analysis, we rely on computations performed with 
\texttt{SageMath}. These codes are publicly available on GitHub at \\
\url{https://github.com/omeravci372742/torsion-subgroups-sagemath-2}.

\noindent \textbf{Acknowledgements.} I would like to thank my advisors, Antonio 
Lei and Payman Eskandari, and my partner in mathematics and in life, Sueda Senturk 
Avci, for their unwavering support, valuable comments, and insightful discussions.

\section{Finding Quadratic Points of two Modular Curves} \label{descentsection}

In this section, we study the problem of finding quadratic points on the 
modular curves $X_1(2,10)$ and $X_1(2,12)$. These two modular curves have genus 
$1$ and hence are elliptic curves themselves. We begin with an important 
theorem stating that all non-cuspidal quadratic points, more precisely, the 
points of $X_1(2,10)(K)$ and $X_1(2,12)(K)$ for a quadratic field $K$ that are 
not cusps, are non-torsion. This theorem also involves the modular curve 
$X_1(11)$, which likewise has genus $1$ and is an elliptic curve.
Since we have already established that $X_1(2,10)$, $X_1(2,12)$, and $X_1(11)$ are 
elliptic curves, we denote their quadratic twists by $d$ as $X_1(2,10)_d$, 
$X_1(2,12)_d$, and $X_1(11)_d$, respectively, for any nonzero integer $d$.

Using this result, we reduce the problem to 
computing the ranks of the quadratic twists of these modular curves. We then 
apply the method of $2$-descent to carry out this computation for certain 
families of quadratic twists. In our calculations, we follow the notes of 
\cite{2-descentpaper}, and we are grateful to Martin Bright for his clearly 
written exposition.

\begin{theorem}[Najman, Kamienny, Theorem 2-8-9-15, \cite{najmankamienny}]\label{noncuspidal}
    If $X_1(11)$, $X_1(2, 10)$ or $X_1(2, 12)$ possess a non-cuspidal quadratic point, then
that point has infinite order.
\end{theorem}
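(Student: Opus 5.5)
The plan is to reduce the statement to a finite computation about torsion of each of the three genus-one curves $X=X_1(11)$, $X_1(2,10)$, $X_1(2,12)$ over \emph{all} quadratic fields simultaneously. Fix a rational cusp $O$ on $X$ as origin, so $X$ becomes an elliptic curve over $\Q$. I would first write down explicit Weierstrass models, for instance $X_1(11): y^2+y=x^3-x^2$ (LMFDB \lmfdbecx{11}{a}{3}), together with rational models for $X_1(2,10)$ and $X_1(2,12)$ (conductors $20$ and $24$, curves in the isogeny classes 20a and 24a). I would also record which points on these models are cusps, by computing the cusps of the modular curve and their fields of definition.

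The key step is to bound $X(K)_{\mathrm{tors}}$ uniformly over quadratic $K=\Q(\sqrt d)$. Write $\sigma$ for the nontrivial automorphism of $K$. A torsion point $P\in X(K)$ satisfies
\[
2P=(P+\sigma P)+(P-\sigma P),
\]
where $P+\sigma P\in X(\Q)_{\mathrm{tors}}$ and $P-\sigma P$ lies in $X_d(\Q)_{\mathrm{tors}}$ via the twist isomorphism. Hence $2X(K)_{\mathrm{tors}}$ embeds in $X(\Q)_{\mathrm{tors}}\oplus X_d(\Q)_{\mathrm{tors}}$, and $X(K)_{\mathrm{tors}}\cong X(\Q)_{\mathrm{tors}}\oplus X_d(\Q)_{\mathrm{tors}}$ for odd parts. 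This gives a uniform bound on odd torsion once $X_d(\Q)_{\mathrm{tors}}$ is controlled.

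To control the twists, I would note that for an elliptic curve over $\Q$ with good reduction at small primes $\ell\nmid 2d\cdot\mathrm{cond}$, the reduction injects prime-to-$\ell$ torsion. Twisting only replaces $a_\ell$ by $\pm a_\ell$, so $X_d(\Q)_{\mathrm{tors}}$ divides $\gcd\big(\#X(\mathbb F_\ell),\#X_{-1}(\mathbb F_\ell)\big)$ over several $\ell$; two or three primes should work even when $\ell\mid d$. For $2$-power torsion over $K$ I would use the $2$-division polynomial and its behaviour over quadratic fields: the field $\Q(X[2])$ is determined by the cubic, and a point of order $4$ or $8$ over $K$ forces explicit square conditions. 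Alternatively I would simply cite Najman's classification of $E(K)_{\mathrm{tors}}$ for $E/\Q$ and quadratic $K$ \cite{NajmanQuadratic}, which applies to $X$ itself as a rational elliptic curve. Either route gives a finite list of possible torsion points on $X$ over quadratic fields.

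Finally, I would enumerate these candidate points: all points in $X[n](\overline{\Q})$ for the finitely many admissible $n$ whose field of definition has degree at most $2$. This is done by factoring division polynomials over $\Q$ and keeping the factors of degree at most $2$ whose corresponding $y$-coordinates are quadratic. I would then check that each such point is a cusp. For $X_1(11)$ the rational torsion $\Z/5\Z$ consists of the five rational cusps, and the remaining cusps are defined over $\Q(\zeta_{11})^+$, so they play no role here. The main obstacle is this last matching: it requires knowing the cusps of $X_1(2,10)$ and $X_1(2,12)$ explicitly in the chosen model, including those defined over $\Q(\sqrt5)$, $\Q(\sqrt{-3})$ or $\Q(i)$. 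It also requires ruling out noncuspidal quadratic torsion points over these special fields, where $X$ acquires extra torsion. I would handle it by computing the cusps directly as the zeros and poles of the moduli functions, namely Tate normal form parameters, on the model, and comparing them with the list.
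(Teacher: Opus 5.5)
The paper does not prove this statement; it imports it from Kamienny--Najman. Since ``infinite order'' just means ``not torsion'', the statement is equivalent to the following: with a rational cusp as origin, every torsion point of degree $\le 2$ on each of the three curves is a cusp. Your plan verifies exactly this, in three steps: a uniform bound on quadratic torsion, enumeration by division polynomials, and matching with cusps. That is a correct and self-contained route; the citation only spares the paper this bookkeeping.

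\textbf{The reduction bound is wrong as written.} For $\ell\nmid 2dN$ ($N$ the conductor), the prime-to-$\ell$ part of $X_d(\Q)_{\mathrm{tors}}$ divides $\#X_d(\mathbb F_\ell)=\ell+1-\chi_d(\ell)a_\ell$. This is \emph{one} of $\#X(\mathbb F_\ell)$ or $\#X^{\mathrm{tw}}(\mathbb F_\ell)$ (the nontrivial twist over $\mathbb F_\ell$), depending on $d$, and not their gcd. For $\ell\mid d$ the twist has additive reduction, so the stated injection is not available at all.
\begin{itemize}
\item The correct uniform bound comes from reducing $X$ itself at a prime of $K$ above $\ell\nmid N$. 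The prime-to-$\ell$ part of $X(K)_{\mathrm{tors}}$ then divides $\#X(\mathbb F_{\ell^2})=\#X(\mathbb F_\ell)\cdot\#X^{\mathrm{tw}}(\mathbb F_\ell)$, however $\ell$ behaves in $K$.
\item The odd part is simpler still. $X_d(\Q)[\ell]\neq0$ forces a rational $\ell$-isogeny. If $X(\Q)[\ell]\neq0$ as well, the Weil pairing forces $\ell=3$, $d=-3$ and $X[3]\cong\Z/3\Z\oplus\mu_3$, hence a cyclic $9$-isogeny. The classes 11a, 20a, 24a rule all of this out, so $X(K)_{\mathrm{odd}}=X(\Q)_{\mathrm{odd}}$ for every quadratic $K$.
\item A smaller point: $P\mapsto(P+\sigma P,P-\sigma P)$ has kernel $X(\Q)[2]$, so it is $X(K)_{\mathrm{tors}}/X(\Q)[2]$ that embeds, not $2X(K)_{\mathrm{tors}}$.
\end{itemize}
Your fallback of citing Najman's classification (or just the Kamienny--Kenku--Momose list) is fine for bounding the possible orders.

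\textbf{The fields where torsion grows include $\Q(\sqrt3)$.} On $y^2=x^3-x^2+x$ the point $(2+\sqrt3,\,3+2\sqrt3)$ and its conjugates have order $8$; the same holds for $(\pm i,\pm1)$ over $\Q(i)$, since all of them halve $(1,\pm1)$. Over $\Q(\sqrt{-3})$ one gets $\Z/2\Z\times\Z/4\Z$. Altogether $X_1(2,12)$ has $16$ torsion points of degree $\le 2$. For $X_1(2,10)$ there are $12$: $\Z/6\Z$ over $\Q$ and $\Z/2\Z\times\Z/6\Z$ over $\Q(\sqrt5)$.

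Your enumeration would find these automatically, so this is not a gap. It does suggest a way around your ``main obstacle'' without computing moduli functions:
\begin{itemize}
\item The genus formula, with index $72$ and $96$ respectively, gives $X_1(2,10)$ and $X_1(2,12)$ exactly $12$ and $16$ cusps, defined over $\Q(\zeta_{10})$ and $\Q(\zeta_{12})$.
\item With a cusp as origin, Manin--Drinfeld makes every cusp a torsion point.
\item One checks that the torsion over these two cyclotomic fields consists exactly of the $12$, respectively $16$, points above. So the cusps coincide with the quadratic torsion points.
\item For $X_1(11)$, Mazur's theorem already shows that the five rational torsion points are cusps.
\end{itemize}
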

\begin{corollary}
Let us denote the modular curve $X_1(11)$ by the elliptic curve
\begin{equation*}
    X_1(11) : y^2 - y = x^3 - x^2 .
\end{equation*}
Let $d$ be an integer. If $\rk(X_1(11)(\mathbb{Q}(\sqrt{d}))) = 0$, then there is no elliptic curve
$E/\mathbb{Q}(\sqrt{d})$ satisfying
\begin{equation*}
    E(\mathbb{Q}(\sqrt{d}))_{\mathrm{tors}} \cong \mathbb{Z}/11\mathbb{Z}.
\end{equation*}
If $\rk(X_1(11)(\mathbb{Q}(\sqrt{d}))) \geq 1$, then there exist infinitely many elliptic curves
$E/\mathbb{Q}(\sqrt{d})$ satisfying
\begin{equation*}
    E(\mathbb{Q}(\sqrt{d}))_{\mathrm{tors}} \cong \mathbb{Z}/11\mathbb{Z}.
\end{equation*}

The same statement holds for the torsion structures
$\mathbb{Z}/2\mathbb{Z} \times \mathbb{Z}/10\mathbb{Z}$ and
$\mathbb{Z}/2\mathbb{Z} \times \mathbb{Z}/12\mathbb{Z}$ if we replace
$X_1(11)$ by the elliptic curves associated to the modular curves
$X_1(2,10)$ and $X_1(2,12)$, respectively, given by
\begin{equation*}
   X_1(2,10): y^2 = x^3 + x^2 - x,
    \qquad
    X_1(2,12) : y^2 = x^3 - x^2 + x.
\end{equation*}
\end{corollary}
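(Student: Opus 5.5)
The plan is to use the moduli interpretation of $X_1(11)$, $X_1(2,10)$ and $X_1(2,12)$ together with Theorem \ref{noncuspidal}, and to reduce everything to the structure of the Mordell--Weil group over $K=\mathbb{Q}(\sqrt{d})$. Write $X$ for any of the three curves and $G$ for the corresponding group $\mathbb{Z}/11\mathbb{Z}$, $\mathbb{Z}/2\mathbb{Z}\times\mathbb{Z}/10\mathbb{Z}$ or $\mathbb{Z}/2\mathbb{Z}\times\mathbb{Z}/12\mathbb{Z}$.

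\textbf{Moduli input.} A non-cuspidal point of $X(K)$ corresponds to a pair $(E,\iota)$, where $E/K$ is an elliptic curve and $\iota\colon G\hookrightarrow E(K)$ is an injection. Conversely, an elliptic curve $E/K$ with $G\subseteq E(K)_{\mathrm{tors}}$ gives a non-cuspidal point of $X(K)$. This uses the fact that these are fine moduli spaces for the relevant level structures ($N\ge 4$). The step needs care for $X_1(2,2N)$: a $\mathbb{Z}/2\times\mathbb{Z}/2N$ level structure has a $\mu_2$-pairing ambiguity. Over $K$ one has full $2$-torsion, so the relevant moduli problem is the one classifying $\mathbb{Z}/2\times\mathbb{Z}/2N$ subgroups, which is what the models $y^2=x^3+x^2-x$ and $y^2=x^3-x^2+x$ in the literature represent.

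\textbf{Rank zero gives non-existence.} Suppose $\rk(X(K))=0$. Then $X(K)=X(K)_{\mathrm{tors}}$, which is finite. By Theorem \ref{noncuspidal}, every non-cuspidal quadratic point has infinite order, and every non-cuspidal rational point does too, since $X(\mathbb{Q})$ consists of cusps by Mazur. Hence every point of $X(K)$ is a cusp. So no $E/K$ has $G\subseteq E(K)_{\mathrm{tors}}$, and in particular none has torsion exactly $G$.

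\textbf{Positive rank gives infinitely many curves.} Suppose $\rk(X(K))\ge1$. Then $X(K)$ is infinite while the cusps form a finite set, so there are infinitely many non-cuspidal points, giving infinitely many pairs $(E,\iota)$ with $G\subseteq E(K)_{\mathrm{tors}}$. Two further points must be handled.

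\emph{Distinct isomorphism classes.} The forgetful map $X\to X(1)$ has finite degree, so each $j$-invariant comes from at most finitely many points. Hence infinitely many distinct $j$-invariants, and so infinitely many non-isomorphic $E/K$, occur.

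\emph{Torsion exactly $G$.} By the Kamienny--Kenku--Momose classification, the torsion of an elliptic curve over a quadratic field containing $G$ must equal $G$. For $\mathbb{Z}/11$ there is no larger group on the list. For $\mathbb{Z}/2\times\mathbb{Z}/10$ and $\mathbb{Z}/2\times\mathbb{Z}/12$ these groups are maximal in the quadratic list; the only other groups with full $2$-torsion are $\mathbb{Z}/2\times\mathbb{Z}/2n$ for $n\le 6$, and $\mathbb{Z}/2\times\mathbb{Z}/12$ contains none of the larger ones. Thus $E(K)_{\mathrm{tors}}\cong G$ exactly.

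\textbf{Expected obstacle.} I expect the main difficulty to be the careful moduli bookkeeping: checking that $K$-points of $X$ correspond to curves over $K$ with the rational level structure, rather than just field-of-moduli data, and checking that the given Weierstrass models are indeed models of these curves. I will take these points from the cited work of Najman and Kamienny rather than reprove them.
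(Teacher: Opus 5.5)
Your proof is correct and follows the same route as the paper. Both use the moduli interpretation, then Theorem~\ref{noncuspidal} (with Mazur for the rational points) to see that every torsion point of $X(\Q(\sqrt d))$ is a cusp, and then Mordell--Weil to get infinitely many non-cuspidal points when the rank is positive.

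You also supply two details the paper leaves implicit. The finite fibres of $X\to X(1)$ give infinitely many non-isomorphic curves, and the Kamienny--Kenku--Momose list makes the torsion exactly $G$; there you should also list $\Z/4\Z\times\Z/4\Z$, which changes nothing. The paper, in turn, adds the reduction $\rk(X(\Q(\sqrt d)))=\rk(X(\Q))+\rk(X_d(\Q))$ with $\rk(X(\Q))=0$. It uses this later, but the statement as phrased does not need it.
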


\begin{proof}
   We know that the non-cuspidal $\Q(\sqrt{d})$-points of $X_1(11)$ correspond to elliptic curves $E/\Q(\sqrt{d})$ satisfying $E(\Q(\sqrt{d}))_{\mathrm{tors}} \cong \Z/11\Z$.  
Since $X_1(11)$ is an elliptic curve, we have
\begin{equation*}
  X_1(11)(\Q(\sqrt{d})) \cong X_1(11)(\Q(\sqrt{d}))_{\mathrm{tors}} \oplus \Z^{\rk(X_1(11)(\Q(\sqrt{d})))}
\end{equation*}
by the Mordell-Weil theorem. By Theorem \ref{noncuspidal}, $X_1(11)$ has no non-cuspidal quadratic torsion points. Therefore, it has a non-cuspidal quadratic point if and only if its rank $\rk(X_1(11)(\Q(\sqrt{d})))$ is positive; in this case, it has infinitely many points.  

By using the equality
\begin{equation*}
    \rk(X_1(11)(\Q(\sqrt{d}))) = \rk(X_1(11)(\Q)) + \rk(X_1(11)_d(\Q)),
\end{equation*}
we can reduce the problem to determining whether $\rk(X_1(11)_d(\Q))$ is positive or zero, because it can be readily verified from LMFDB that $\rk(X_1(11)(\Q)) = 0$, which is also evident from Mazur's theorem.  

A similar argument applies to $X_1(2,10)$ and $X_1(2,12)$, since 
\begin{equation*}
    \rk(X_1(2,10)(\Q)) =\rk(X_1(2,12)(\Q)) = 0
\end{equation*}
according to LMFDB.
\end{proof}

In the case $\rk(X_1(2,10)_d)(\mathbb{Q}) = 0$ (respectively for $X_1(2,12)$), there do not exist elliptic curves defined over $\mathbb{Q}(\sqrt{d})$ with the desired torsion. This also implies that no rational elliptic curves with this torsion.  

If, on the other hand, $\rk(X_1(2,10)_d)(\mathbb{Q}) \geq 1$ (respectively for $X_1(2,12)$), there exist infinitely many elliptic curves defined over $\mathbb{Q}(\sqrt{d})$ with the desired torsion. The following theorems demonstrate that, under these conditions, there also exist infinitely many rational elliptic curves with the same torsion structure.

\begin{proposition}\label{infinitely10}
    Let $d$ be an integer, and let $X_1(2,10)_d$ denote the $d$-quadratic twist of the modular curve $X_1(2,10)$. If $\rk(X_1(2,10)_d(\Q)) \geq 1$, then there exist infinitely many elliptic curves $E/\Q$ with 
    \begin{equation*}
        E(\Q(\sqrt{d}))_{\mathrm{tors}} \cong \Z/2\Z \times \Z/10\Z.
    \end{equation*}
\end{proposition}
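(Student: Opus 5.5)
We take the points of $X_1(2,10)(\Q(\sqrt{d}))$ that are anti-invariant under $\Gal(\Q(\sqrt{d})/\Q)$, show that each gives a curve defined over $\Q$ up to isomorphism, and twist that curve down to one defined over $\Q$ that keeps the full torsion over $\Q(\sqrt{d})$.

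\textbf{Step 1 (a positive-rank subgroup of points).} Let $\sigma$ be the generator of $\Gal(\Q(\sqrt{d})/\Q)$. The quadratic twist identifies $X_1(2,10)_d(\Q)$ with
\begin{equation*}
\{P \in X_1(2,10)(\Q(\sqrt{d})) : \sigma(P) = -P\}.
\end{equation*}
Since $\rk(X_1(2,10)_d(\Q))\ge 1$, this set is infinite. Only finitely many of its points are cusps, so we obtain infinitely many non-cuspidal points $P$ with $\sigma(P)=-P$.

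\textbf{Step 2 (the curve is defined over $\Q$ up to isomorphism).} Each such $P$ corresponds to a triple $(E_P, T_1, T_2)$ over $\Q(\sqrt{d})$, with $T_1$ of order $2$ and $T_2$ of order $10$. The map $[-1]$ on the modular curve, viewed as an elliptic curve with a rational cusp as origin, should correspond to a moduli automorphism. We expect it to be $(E,T_1,T_2)\mapsto(E,T_1,\lambda T_2)$ for a suitable $\lambda\in(\Z/10\Z)^\times$, possibly composed with something fixing the $j$-invariant. This identification is the crux of the proof. We would verify it explicitly: parametrize the Tate normal form of $X_1(2,10)$ by a rational function $t(x,y)$, then check that $x\mapsto x$, $y\mapsto -y$ changes $t$ by an automorphism preserving $j$. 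This can be done symbolically, for instance in \texttt{SageMath}.

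Granting this, $\sigma(P)=-P$ gives $j(E_P)^\sigma = j(E_P)$, so $j(E_P)\in\Q$. Hence $E_P$ is $\Q(\sqrt{d})$-isomorphic to a quadratic twist of some curve $E'$ defined over $\Q$ with $j(E')=j(E_P)$. The cases $j=0,1728$ occur for only finitely many $P$ and can be discarded.

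\textbf{Step 3 (choosing the rational model).} Every twist of $E'$ by a $\delta\in\Q(\sqrt{d})^\times$ gives a curve isomorphic to $E_P$ over $\Q(\sqrt{d})$. We need one of these twists to be defined over $\Q$. Pick any $E'/\Q$ with $j(E')=j(E_P)$; then $E_P\cong E'_\delta$ over $\Q(\sqrt{d})$. Because $E_P$ and $\sigma(E_P)$ are isomorphic, $\delta^\sigma/\delta$ is a square in $\Q(\sqrt{d})$. By Hilbert 90 arguments on $\Q(\sqrt{d})^\times/\Q(\sqrt{d})^{\times 2}$, $\delta$ can then be taken in $\Q^\times$ up to squares, except for a possible extra class $\sqrt{d}$. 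We would handle that case either by checking that it cannot occur, using the Galois action on $T_2$, or by noting that $E'_{\sqrt{d}}$ still has a model over $\Q$. The result is a curve $E/\Q$ with $E\cong E_P$ over $\Q(\sqrt{d})$, so $\Z/2\Z\times\Z/10\Z\subseteq E(\Q(\sqrt{d}))$.

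\textbf{Step 4 (exact torsion and infinitude).} By Najman's classification \cite{NajmanQuadratic}, no $E/\Q$ has torsion strictly containing $\Z/2\Z\times\Z/10\Z$ over a quadratic field. Therefore $E(\Q(\sqrt{d}))_{\mathrm{tors}}\cong\Z/2\Z\times\Z/10\Z$ exactly. Finally, infinitely many $P$ give infinitely many distinct $j$-invariants, since the $j$-map on $X_1(2,10)$ has finite degree. This yields infinitely many non-isomorphic curves $E/\Q$ with the required torsion.
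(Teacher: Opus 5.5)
There is a genuine gap in Steps 2--3, exactly where you placed the crux. You never prove which moduli automorphism $[-1]$ is, and your candidate $(E,T_1,T_2)\mapsto(E,T_1,\lambda T_2)$ with $\lambda\in\{3,7\}$ is wrong. Such a diamond operator acts trivially on the level-$2$ structure, so it is central in the modular automorphism group $\mathrm{Aut}(\Z/2\Z\times\Z/10\Z)/\{\pm1\}\cong S_3\times\Z/2\Z$. On the genus-one curve $X_1(2,10)$, whose Mordell--Weil group is $\Z/6\Z$, that central involution is translation by the rational $2$-torsion point. A translation has no fixed points, so it cannot equal $[-1]$. The correct identification is that $[-1]$ is the deck involution of the forgetful cover $X_1(2,10)\to X_1(10)$, $(E,T_1,T_2)\mapsto(E,T_2)$, i.e.\ $(E,T_1,T_2)\mapsto(E,T_1+5T_2,T_2)$. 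In the paper's model, $x=2t-1$ with $t$ the Tate-normal-form parameter on $X_1(10)$, and $[-1]$ is $y\mapsto -y$.

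This matters for Step 3. If $\sigma(P)=g(P)$ with $g$ induced by $\alpha$, rigidity gives a unique $K$-isomorphism $\phi\colon E_P^{\sigma}\to E_P$ with $\phi\circ\phi^{\sigma}=\pm1$. Then $E_P$ is $K$-isomorphic to a curve over $\Q$ if and only if the sign is $+1$. With your $\alpha$ you get $\phi\circ\phi^\sigma(T_2)=\lambda^2T_2=-T_2$, so the descent would fail. Your description of the obstruction is also off. The class $\sqrt d$ is never an extra class: $(\sqrt d)^\sigma/\sqrt d=-1$ is a square in $K$ only for $K=\Q(i)$, where $i=(1+i)^2/2$ already lies in $\Q^\times K^{\times2}$. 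The genuine extra class exists precisely when $-1$ is a norm from $K$. A twist by it is not $K$-isomorphic to any curve over $\Q$, so your fallback option cannot work.

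Once $[-1]$ is known to be the deck involution, the twisting step disappears. The anti-invariant points are exactly the points of $X_1(2,10)$ lying over $X_1(10)(\Q)$. Since $Y_1(10)$ is a fine moduli space (Tate normal form), each such point directly gives $E/\Q$ with a rational point of order $10$ whose full $2$-torsion is defined over $K$. This is precisely the paper's proof. Jeon--Kim--Lee's curves $E(b(t),c(t))$ with $t\in\Q$ have full $2$-torsion over $\Q(\sqrt{d(t)})$, where $d(t)=8t^3-8t^2+1$. The substitution $x=2t-1$ turns $dy^2=x^3+x^2-x$ into $dy^2=d(t)$, so a rational point on the twist yields a rational $t$ with $\Q(\sqrt{d(t)})=\Q(\sqrt d)$. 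Your Step 4 (exactness via Najman's list, infinitude via the finite degree of $j$) is fine.
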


\begin{proof}
    Since $\rk(E_d(\Q)) \geq 1$, there exist infinitely many rational solutions to the equation 
    \begin{equation*}
        dy^2 = x^3 + x^2 - x.
    \end{equation*}
    In \cite[Theorem 3.2]{JeonKimLee}, it is shown that if $d = d(t) = 8t^3 - 8t^2 + 1$ with $t \in \Q$, and if $E$ is the elliptic curve defined by
    \begin{equation*}
        E: y^2 + (1 - c)xy - by = x^3 - bx^2,
    \end{equation*}
    where 
\begin{equation*}
      b = \frac{t^3(2t^2 - 3t + 1)}{(t^2 - 3t + 1)^2}, \quad 
        c = -\frac{t(2t^2 - 3t + 1)}{t^2 - 3t + 1}, \quad t \neq 0, \frac{1}{2}, 1,
\end{equation*}
    then the torsion subgroup of $E$ over $\Q(\sqrt{d})$ is $\Z/2\Z \times \Z/10\Z$.  

    A change of variables $x = 2t - 1$ shows that a rational solution $(x,y)$ 
    to the first equation determines a rational value of $t$. This rational $t$ 
    in turn produces rational values of $b$ and $c$, so that the corresponding 
    elliptic curve $E$ is defined over $\Q$. Moreover, $E$ has the desired 
    torsion subgroup over $\Q(\sqrt{dy^2})$, which coincides with 
    $\Q(\sqrt{d})$.

    Since the rank is positive, there are infinitely many distinct rational 
    solutions $(x,y)$ to the elliptic curve equation. Distinct $x$-values 
    correspond to distinct $t$-values, which in turn give distinct 
    $j$-invariants. Therefore, we obtain infinitely many distinct rational 
    elliptic curves. This follows because each solution corresponds to a 
    distinct quadratic point on the modular curve $X_1(2,10)$, and each such 
    quadratic point corresponds to a different elliptic curve with the desired 
    torsion.
\end{proof}

\begin{proposition}\label{infinitely12}
     Let $d$ be an integer, and let $X_1(2,12)_d$ denote the $d$-quadratic twist of the modular curve $X_1(2,12)$. If $\rk(X_1(2,12)_d(\Q)) \geq 1$, then there exist infinitely many elliptic curves $E/\Q$ with
     \begin{equation*}
         E(\Q(\sqrt{d}))_{\mathrm{tors}} \cong \Z/2\Z \times \Z/12\Z.
     \end{equation*}
\end{proposition}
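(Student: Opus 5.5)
I will follow the strategy of Proposition \ref{infinitely10}, replacing the Jeon--Kim--Lee family for $\Z/2\Z\times\Z/10\Z$ with the analogous family for $\Z/2\Z\times\Z/12\Z$. The first step is to invoke the companion result of \cite{JeonKimLee}. It gives, for a parameter $t\in\Q$ outside a finite set of degenerate values, a rational Tate normal form
\begin{equation*}
E_t: y^2+(1-c)xy-by=x^3-bx^2, \qquad b=b(t),\ c=c(t)\in\Q(t),
\end{equation*}
together with a polynomial $d(t)\in\Q[t]$. The relevant property is that $E_t(\Q(\sqrt{d(t)}))_{\mathrm{tors}}\cong\Z/2\Z\times\Z/12\Z$. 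The discriminant $d(t)$ cuts out a genus-one curve $s^2=d(t)$. Since this family parametrizes quadratic points of $X_1(2,12)$ that come from rational elliptic curves, that curve should be birational over $\Q$ to $X_1(2,12): y^2=x^3-x^2+x$.

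The second step is to make this birational map explicit. I would look for an affine substitution $x=\alpha t+\beta$, possibly combined with a scaling of $y$, which transforms $dy^2=x^3-x^2+x$ into $d\,y'^2=d(t)$ up to a rational square factor. For $X_1(2,10)$ the substitution $x=2t-1$ was enough. If $d(t)$ is not cubic (for instance if it is quartic), I would instead use the standard quartic-to-Weierstrass transformation, based at a rational point of $s^2=d(t)$ such as a cusp, and check that its Weierstrass model is isomorphic to $y^2=x^3-x^2+x$. Verifying this isomorphism is the main obstacle; I would do it by comparing $j$-invariants and then finding the explicit change of coordinates, which can be checked in \texttt{SageMath}.

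Once the map is in hand, take any rational point $(x_0,y_0)$ of $X_1(2,12)_d$ with $y_0\neq 0$, i.e.\ a solution of $dy_0^2=x_0^3-x_0^2+x_0$. It yields a rational $t_0$ with $d(t_0)=d\cdot(\text{rational square})$. Therefore $\Q(\sqrt{d(t_0)})=\Q(\sqrt d)$, and $E_{t_0}$ is defined over $\Q$ and has torsion $\Z/2\Z\times\Z/12\Z$ over $\Q(\sqrt d)$.

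The last step is infinitude. Because $\rk(X_1(2,12)_d(\Q))\ge1$, there are infinitely many rational points, and hence infinitely many distinct $x_0$. Only finitely many of these give degenerate parameters $t_0$. Finally, the map $t\mapsto j(E_t)$ is a nonconstant rational function, so it has finite fibres, and therefore infinitely many $t_0$ give infinitely many pairwise non-isomorphic curves $E/\Q$.
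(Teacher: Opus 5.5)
Your proposal is correct and is essentially the paper's proof. The paper applies \cite[Theorem 3.3]{JeonKimLee} with $d(t)$ replaced by the quartic $(t^2-1)(t^2+3)$, and uses exactly your quartic-to-cubic reduction based at the rational root $t=1$ (a point with $s=0$, which is what makes the map commute with twisting by $d$). The paper writes this map as $t=(x+1)/x$, $z=y/x^2$, which actually lands on the model $dy^2=8x^3+8x^2+4x+1$; on $dy^2=x^3-x^2+x$ itself one should take $t=(x+1)/(x-1)$, $z=4y/(x-1)^2$. Your finite-fibre argument for $t\mapsto j(E_t)$ is more careful than the paper's assertion that distinct $t$ give distinct $j$-invariants, which fails literally because $c(t)=(1-t^2)/(t^4+3t^2)$ is even in $t$.
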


\begin{proof}
   Similar to the proof of Proposition \ref{infinitely10}, \cite[Theorem 3.3]{JeonKimLee} states that if $d = d(t) = \frac{t^2 - 1}{t^2 + 3}$ with $t \in \Q$, and if $E$ is the elliptic curve defined by
   \begin{equation*}
       E: y^2 + (1 - c)xy - (c + c^2)y = x^3 - (c + c^2)x^2,
   \end{equation*}
   where 
  \begin{equation*}
      c = \frac{1 - t^2}{t^4 + 3t^2}, \quad t \neq -1,0,1,
  \end{equation*}
   then the torsion subgroup of $E$ over $\Q(\sqrt{d})$ is $\Z/2\Z \times \Z/12\Z$.  

   In this case, obtaining $t$ from a rational solution of 
   \begin{equation*}
       dy^2 = x^3 - x^2 + x,
   \end{equation*}
   which is the $d$-quadratic twist of $X_1(2,12)$, is slightly more subtle. Following \cite{JeonKimLee}, we may replace $d(t)$ by $(t^2 - 1)(t^2 + 3)$ without changing the quadratic field $\Q(\sqrt{d})$. This leads to the hyperelliptic curve
   \begin{equation*}
       dz^2 = (t^2 - 1)(t^2 + 3),
   \end{equation*}
   for which we seek rational solutions $(t,z)$. This hyperelliptic curve is birationally equivalent to our elliptic curve via
   \begin{equation*} 
       t = \frac{x+1}{x}, \quad z = \frac{y}{x^2}.
   \end{equation*}
   Consequently, rational solutions to the elliptic curve correspond to rational elliptic curves with the desired torsion.  

   Since different $x$-values yield distinct $t$-values, they produce distinct $j$-invariants, giving infinitely many distinct rational elliptic curves. This is because each solution corresponds to a distinct quadratic point on $X_1(2,12)$, which in turn corresponds to a distinct elliptic curve with the desired torsion.
\end{proof}

Before starting our next proofs, which rely heavily on $2$-descent, we give a brief recap of the method following the notes of Bright \cite{2-descentpaper}.

Let $E/\Q$ be an elliptic curve. If $E$ admits a rational $2$-isogeny, then it must have a non-trivial $2$-torsion point. Our main goal is to compute the quotient $E(\Q)/2E(\Q)$, which determines the rank $\rk(E(\Q))$ once $E(\Q)[2]$ is known.

Assume that $E(\Q)[2]$ is non-trivial. Then $E$ can be written in the form
\begin{equation*}
    E: y^2 = x(x^2 + ax + b), \quad a,b \in \Q.
\end{equation*}
With this form, we have the following lemma, which constructs an explicit $2$-isogeny from $E$ to another elliptic curve $E'/\Q$.

\begin{lemma}\label{2isogenyconstruction}
    There exists an elliptic curve $E'$ over $\Q$ and an isogeny $\phi: E \to E'$ with kernel $\{O, (0,0)\}$. Specifically, $E'$ is given by
    \begin{equation*}
        E': y^2 = x(x^2 + a'x + b'), \quad a' = -2a, \; b' = a^2 - 4b,
    \end{equation*}
    and $\phi$ is defined by
    \begin{equation*}
        \phi(x,y) = \Bigl(x + a + \frac{b}{x}, \; y - \frac{by}{x^2}\Bigr),
    \end{equation*}
    for all $(x,y) \in E(\mathbb{C})$ with $(x,y) \neq (0,0)$, which is mapped to $O$.
\end{lemma}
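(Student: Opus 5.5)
The plan is to verify the statement by direct algebraic computation. There are three things to check: that $E'$ is an elliptic curve, that $\phi$ is a morphism $E\to E'$ whose kernel is exactly $\{O,(0,0)\}$, and that $\phi$ is a group homomorphism.

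\textbf{Step 1: $E'$ is nonsingular.} Nonsingularity of $E$ means $b\neq 0$ and $a^2-4b\neq 0$. For $E'$ we need $b'\neq 0$ and $a'^2-4b'\neq 0$. We have $b'=a^2-4b\neq 0$ directly, and
\begin{equation*}
a'^2-4b'=4a^2-4(a^2-4b)=16b\neq 0.
\end{equation*}
So $E'$ is an elliptic curve over $\Q$.

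\textbf{Step 2: $\phi$ lands on $E'$.} Take $(x,y)\in E(\mathbb{C})$ with $x\neq 0$. Set
\begin{equation*}
X=x+a+\frac{b}{x}=\frac{x^2+ax+b}{x}=\frac{y^2}{x^2}, \qquad Y=y-\frac{by}{x^2}=\frac{y(x^2-b)}{x^2}.
\end{equation*}
We must check $Y^2=X(X^2-2aX+a^2-4b)$. Note that $X^2-2aX+a^2-4b=(X-a)^2-4b$, and $X-a=(x^2+b)/x$. Hence
\begin{equation*}
(X-a)^2-4b=\frac{(x^2+b)^2-4bx^2}{x^2}=\frac{(x^2-b)^2}{x^2}.
\end{equation*}
Therefore
\begin{equation*}
X\bigl((X-a)^2-4b\bigr)=\frac{y^2}{x^2}\cdot\frac{(x^2-b)^2}{x^2}=Y^2,
\end{equation*}
as required. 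This is the only computation in the proof, and with the substitutions $X=y^2/x^2$ and $X-a=(x^2+b)/x$ it is short.

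\textbf{Step 3: extension to all of $E$ and the kernel.} Set $\phi(O)=O$ and $\phi(0,0)=O$. As a rational map from a smooth curve, $\phi$ extends uniquely to a morphism $E\to E'$, and the extension sends $O\mapsto O$. To see this, note that $X$ has a pole of order $2$ at $O$ and $Y$ a pole of order $3$. At $(0,0)$, $x$ has a zero of order $2$ and $y$ a zero of order $1$, so $X$ has a pole of order $2$ and $Y$ a pole of order $3$; hence $(0,0)\mapsto O$. At any other point, $x\neq 0$, so $X$ and $Y$ are finite and the image is not $O$. The kernel is therefore exactly $\{O,(0,0)\}$.

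\textbf{Step 4: $\phi$ is an isogeny.} A morphism of elliptic curves sending $O$ to $O$ is automatically a group homomorphism, so $\phi$ is an isogeny. Its coefficients lie in $\Q$, so $\phi$ is defined over $\Q$. This completes the proof.
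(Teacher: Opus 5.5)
Your proof is correct and complete. The substitutions $X=y^2/x^2$ and $X-a=(x^2+b)/x$ verify $Y^2=X(X^2+a'X+b')$, and your valuation computations at $O$ and $(0,0)$ determine the kernel exactly. The homomorphism property then follows from the general fact that a morphism of elliptic curves fixing $O$ is a group homomorphism.

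The paper gives no proof of its own here; the lemma is recalled as background from Bright's $2$-descent notes. Your argument is the standard verification. The usual alternative is the more elementary route of Silverman--Tate, which checks by hand that collinear triples on $E$ map to collinear triples on $E'$. That avoids the general theorem at the cost of a longer case computation.
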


From $E'$ we can define another isogeny $\phi'$ and elliptic curve $E''$:
\begin{equation*}
    E'': y^2 = x(x^2 + a'' x + b''), \quad a'' = -2a' = 4a, \quad b'' = (a')^2 - 4b' = 16b.
\end{equation*}
The curve $E''$ is isomorphic to $E$ via $x \mapsto 4x$ and $y \mapsto 8y$. This provides the dual isogeny $\hat{\phi}: E' \to E$ given by
\begin{equation*}
    \hat{\phi}(x,y) = \Bigl(\frac{1}{4}\bigl(x + a'b'/x\bigr), \; \frac{1}{8}\bigl(y - b'y/x^2\bigr)\Bigr).
\end{equation*}
The kernel of $\hat{\phi}$ is $\{O, (0,0)\} \subseteq E'(\Q)$, and the composite $\hat{\phi} \circ \phi: E \to E$ equals the multiplication-by-$2$ map.

Next, we study the image of $E(\Q)$ in $E'(\Q)$. In particular, we are interested in the quotient $E'(\Q)/\phi(E(\Q))$. To compute this, we define the following map:

\begin{lemma} \label{Q/Q^2}
    Define $\varphi: E'(\Q) \to \Q^\times/(\Q^\times)^2$ by
    \begin{align*}
        \varphi((x,y)) &= \overline{x}, \quad x \neq 0,\\
        \varphi((0,0)) &= \overline{a^2 - 4b},\\
        \varphi(O) &= \overline{1}.
    \end{align*}
    Then $\varphi$ is a homomorphism of groups, and the sequence
    \begin{equation*}
        E(\Q) \xrightarrow{\phi} E'(\Q) \xrightarrow{\varphi} \mathrm{\Kum}
    \end{equation*}
    is exact. In particular,
    \begin{equation*}
        \phi(E(\Q)) = \ker \varphi, \quad \text{and} \quad E'(\Q)/\phi(E(\Q)) \cong \varphi(E'(\Q)).
    \end{equation*}
\end{lemma}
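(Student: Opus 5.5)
Since $\varphi$ is defined piecewise, the plan is to check the homomorphism property using the chord–tangent description of addition on $E'$, and then to prove exactness through the explicit formula for $\phi$ in Lemma \ref{2isogenyconstruction}. Because $\varphi$ is a map of points of $E'$, the relevant constant is $b'=a^2-4b$. Note that $\varphi$ depends only on $x$, so $\varphi(-P)=\varphi(P)$; since $\Kum$ is $2$-torsion, it therefore suffices to show that $\varphi(P_1)\varphi(P_2)\varphi(P_3)=\overline{1}$ whenever $P_1+P_2+P_3=O$ on $E'$.

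\textbf{Homomorphism.} Take three collinear points $P_i=(x_i,y_i)$ on the line $y=\lambda x+\nu$, first assuming none equals $O$ or $(0,0)$. Substituting into $y^2=x(x^2+a'x+b')$ gives $x^3+(a'-\lambda^2)x^2+(b'-2\lambda\nu)x-\nu^2=0$, whose roots are $x_1,x_2,x_3$. Hence $x_1x_2x_3=\nu^2$, a square, and $\nu\neq 0$ because no $x_i$ vanishes. The degenerate cases are handled separately:
\begin{itemize}
\item[(i)] If some $P_i=O$, the remaining two points are $P$ and $-P$, and $\varphi(P)\varphi(-P)=\overline{x^2}=\overline{1}$.
\item[(ii)] If $P_3=(0,0)$ and $P_1,P_2\neq(0,0)$, then $\nu=0$, so $x_1,x_2$ are the roots of $x^2+(a'-\lambda^2)x+b'$. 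Thus $x_1x_2=b'$, and $\varphi(P_1)\varphi(P_2)=\overline{b'}=\varphi(P_3)^{-1}$ in $\Kum$.
\item[(iii)] If two of the points equal $(0,0)$, the third is $O$, and $\varphi((0,0))^2=\overline{1}$.
\end{itemize}
Tangent cases are covered because the root relations count multiplicity. Finally, $b'\neq 0$ since $E'$ is nonsingular, so $\varphi$ is well defined.

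\textbf{Inclusion $\phi(E(\Q))\subseteq\ker\varphi$.} For $(x,y)\in E(\Q)$ with $x\neq0$, the $x$-coordinate of the image is
\begin{equation*}
x+a+\frac{b}{x}=\frac{x^2+ax+b}{x}=\frac{y^2}{x^2},
\end{equation*}
which is a square when $y\neq 0$. The remaining cases are as follows:
\begin{itemize}
\item[(i)] If $y=0$ and $x\neq 0$, then $\phi(x,0)=(0,0)$. In this case $x^2+ax+b$ has a rational root, so its discriminant $a^2-4b$ is a rational square, giving $\varphi((0,0))=\overline{1}$.
\item[(ii)] The points $(0,0)$ and $O$ of $E$ map to $O$, and $\varphi(O)=\overline{1}$.
\end{itemize}

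\textbf{Reverse inclusion $\ker\varphi\subseteq\phi(E(\Q))$.} This is the main obstacle. Given $(X,Y)\in E'(\Q)$ with $X=w^2$, $w\in\Q^\times$, I would construct an explicit preimage. The plan is to set
\begin{equation*}
x=\frac{1}{2}\Bigl(w^2-a+\frac{Y}{w}\Bigr),\qquad y=xw.
\end{equation*}
This choice is motivated by requiring $y^2/x^2=X$ and $x+a+b/x=X$. The latter condition means that $x$ is a root of $x^2+(a-X)x+b=0$. The discriminant of this quadratic is $(X-a)^2-4b$, and using the equation of $E'$ it equals $Y^2/X=(Y/w)^2$, so the root $x$ is rational. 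One then verifies that $(x,y)$ lies on $E$ and maps to $(X,Y)$; the sign of $w$ is chosen to match the $y$-coordinate.

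The remaining kernel elements are handled directly:
\begin{itemize}
\item[(i)] If $(0,0)\in\ker\varphi$, then $b'=a^2-4b$ is a square. Hence $x^2+ax+b$ splits over $\Q$, and the resulting rational point $(x_0,0)$ with $x_0\neq 0$ maps to $(0,0)$.
\item[(ii)] $O$ has preimage $O$.
\end{itemize}
The stated isomorphism then follows from the first isomorphism theorem.
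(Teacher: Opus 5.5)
Your proof is correct. The paper states this lemma without proof, deferring to Bright's notes, and your argument is the standard one it relies on: the collinear-points computation $x_1x_2x_3=\nu^2$ gives the homomorphism property, and the explicit preimage $x=\tfrac12\bigl(w^2-a+Y/w\bigr)$, $y=xw$ gives $\ker\varphi\subseteq\phi(E(\Q))$.

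Your remark about choosing the sign of $w$ is superfluous. The two roots are $x$ and $b/x$ with $x-b/x=Y/w$, so $y-by/x^2=w(x-b/x)=Y$ holds for either sign of $w$; the two signs simply give the two preimages, which differ by $(0,0)$.
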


Thus, computing the quotient $E'(\Q)/\phi(E(\Q))$ reduces to finding the image of $E'(\Q)$ in $\mathrm{\Kum}$. The following lemma gives an explicit criterion:

\begin{lemma}
    Let $r$ be a square-free integer. Then $\overline{r} \in \mathrm{\Kum}$ lies in the image of $\varphi$ if and only if the equation
    \begin{equation*}
        r^2 l^4 + a' r l^2 m^2 + b' m^4 = r n^2
    \end{equation*}
    has a non-zero solution $(l,m,n) \in \Z^3$. Moreover, this can occur only if $r \mid b'$, and one may assume $\gcd(l,m) = 1$.
\end{lemma}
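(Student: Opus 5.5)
We prove the two directions separately, working with a point $(x,y)\in E'(\Q)$ with $x\neq 0$ and writing $r$ for the square-free representative of $\varphi((x,y))=\overline{x}$.

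\emph{A point of $E'(\Q)$ gives a solution.} Write $x$ in lowest terms as a rational number. Since $y^2 = x(x^2+a'x+b')$, a standard denominator argument shows $x = r l^2/m^2$ with $l,m\in\Z$, $\gcd(l,m)=1$ and $m\neq 0$. Concretely, the denominator of $x$ must be a square $m^2$; here we clear denominators so that $a',b'$ may be taken to be integers, as is implicit in the statement. The numerator of $x$ then equals $r$ times a square, $r l^2$.

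Substituting this into the equation of $E'$ gives
\begin{equation*}
y^2 = \frac{r l^2}{m^2}\cdot \frac{r^2 l^4 + a' r l^2 m^2 + b' m^4}{m^4}.
\end{equation*}
Hence $r\,(r^2l^4+a'rl^2m^2+b'm^4)$ is a square times $1/(l^2m^6)$-type factors, so the bracket equals $r$ times a rational square. Writing that square as $n^2$, with $n = y m^3/(r l)$ after clearing factors, shows $n$ is rational. Since the left side is an integer and $r$ is square-free, $n$ must in fact be an integer.

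\emph{The point $(0,0)$.} Here $r = b'$ up to squares. We choose $(l,m,n)=(0,1,1)$ after scaling, using $b'm^4 = r n^2$ with $r$ the square-free part of $b'$; this case needs a small separate check. The point $O$ corresponds to $r=1$, where $(l,m,n)=(1,0,1)$ works.

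\emph{A solution gives a point of $E'(\Q)$.} Given a nonzero $(l,m,n)$, if $m\neq0$ and $l\neq 0$, set
\begin{equation*}
x = \frac{r l^2}{m^2}, \qquad y = \frac{r l n}{m^3}.
\end{equation*}
Reversing the computation above shows $(x,y)\in E'(\Q)$ and $\varphi((x,y)) = \overline{r}$. If $l=0$, then $b'm^4 = rn^2$, so $\overline{r}=\overline{b'}=\varphi((0,0))$. If $m=0$, then $r^2l^4 = rn^2$ forces $r$ to be a square, so $r=1$ and $\overline{r}=\varphi(O)$.

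\emph{The divisibility $r\mid b'$.} With $\gcd(l,m)=1$, reduce the equation modulo a prime $q\mid r$. This gives $q \mid b'm^4$. If $q\mid m$, then since $q\nmid l$ we get $q^2 \mid r^2l^4$ and $q^2\mid$ the middle and last terms. So $q^2\mid rn^2$, which forces $q\mid n$; a $q$-adic valuation comparison then yields a contradiction, because the left side has valuation exactly $2$ while the right side has valuation at least $3$. Hence $q\nmid m$, so $q\mid b'$. Since $r$ is square-free, $r\mid b'$.

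The main obstacle is the first step: carefully justifying that the denominator of $x$ is a perfect square and that the numerator is $r$ times a square. This requires a $p$-adic valuation argument on $y^2 = x(x^2+a'x+b')$ with integral $a',b'$. The rest is bookkeeping.
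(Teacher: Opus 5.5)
The paper does not prove this lemma. It is quoted from Bright's notes as part of the $2$-descent recap, so there is no route of the paper's to compare against. Your argument is the standard one and is correct, apart from two small points.

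First, the step you single out as the main obstacle is not needed. The condition $\varphi((x,y))=\overline{r}$ with $x\neq 0$ means exactly that $x/r\in(\Q^\times)^2$. So you may write $x/r=(l/m)^2$ with $l,m\in\Z$, $m\neq 0$, $\gcd(l,m)=1$, and get $x=rl^2/m^2$ at once. You never need the fact that the denominator of $x$ in lowest terms is a square; the fraction $rl^2/m^2$ need not be in lowest terms, and nothing uses that it is. Your argument that $n=ym^3/(rl)$ is an integer goes through unchanged: the left-hand side is an integer and $r$ is square-free.

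Second, in the case of $(0,0)$ the triple $(0,1,1)$ works only when $b'$ is itself square-free, and no rescaling fixes this. If $b'=rt^2$, the solutions with $l=0$ are exactly $(0,m,\pm tm^2)$. Since $r$ is the square-free part of $b'$, such an integer $t$ exists, and the correct witness is $(l,m,n)=(0,1,t)$. For instance, in Proposition~\ref{rank10} one has $b'=5p^2$ and $r=5$, so the witness is $(0,1,p)$.

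The remaining parts are fine. The converse, including the degenerate cases $l=0$ and $m=0$, is correct. The divisibility argument is also correct: the left side has $q$-adic valuation exactly $2$, because the middle and last terms are divisible by $q^3$ and $q^4$ respectively. You could shorten it by noting that $v_q(rn^2)=1+2v_q(n)$ is odd, which already contradicts valuation $2$.

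The clause ``one may assume $\gcd(l,m)=1$'' follows from your two directions together. Any nonzero solution gives a point, and your forward construction then returns a coprime solution in all three cases ($x\neq 0$, $(0,0)$, and $O$).
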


We now present the last piece of the puzzle for computing $E(\Q)/2E(\Q)$. After this, we turn to calculations for the modular curves $X_1(2,10)$ and $X_1(2,12)$ and their $-p$-quadratic twists.

\begin{lemma}[Kernel-cokernel sequence]\label{kercoker}
    Let $A \xrightarrow{f} B \xrightarrow{g} C$ be homomorphisms of abelian groups. Then the sequence
    \begin{equation*}
        0 \to \ker f \xrightarrow{\mathrm{id}} \ker(g \circ f) \xrightarrow{f} \ker g \xrightarrow{\mathrm{id}} \coker f \xrightarrow{g} \coker(g \circ f) \xrightarrow{\mathrm{id}} \coker g \to 0
    \end{equation*}
    is exact.
\end{lemma}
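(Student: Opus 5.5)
The plan is a direct diagram chase, treating the six-term sequence as a splice of three shorter exact pieces. All maps are well defined: $\ker f\subseteq \ker(g\circ f)$ by inclusion, since $f(a)=0$ gives $g(f(a))=0$. Also $f$ sends $\ker(g\circ f)$ into $\ker g$. The identity on $B$ induces $\ker g\to \coker f=B/f(A)$. The map $g$ induces $\coker f\to \coker(g\circ f)=C/g(f(A))$, which is legitimate because $g(f(A))$ is exactly the image of $f(A)$. Finally the identity on $C$ induces $C/gf(A)\to C/g(B)$, well defined because $gf(A)\subseteq g(B)$. Each map is a homomorphism because it is induced by one.

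\textbf{Exactness at the two ends and at $\ker(g\circ f)$.}
1. The first map is an inclusion, hence injective.
2. The last map is a quotient map $C/gf(A)\to C/g(B)$, hence surjective.
3. At $\ker(g\circ f)$: the kernel of $f$ restricted to $\ker(g\circ f)$ consists of those $a$ with $f(a)=0$. Any such $a$ automatically lies in $\ker(g\circ f)$, so this kernel is precisely $\ker f$, the image of the inclusion.

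\textbf{Exactness at $\ker g$.} The image of $\ker(g\circ f)$ under $f$ is $f(A)\cap \ker g$. Indeed, if $b=f(a)$ and $g(b)=0$, then $a\in\ker(g\circ f)$. On the other hand, the kernel of $\ker g\to B/f(A)$ is also $\ker g\cap f(A)$, so the two agree.

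\textbf{Exactness at $\coker f$.} The image of $\ker g$ in $B/f(A)$ is $(\ker g+f(A))/f(A)$. A class $\bar b$ maps to zero in $C/gf(A)$ iff $g(b)=g(f(a))$ for some $a$. That holds iff $b-f(a)\in\ker g$, i.e. iff $b\in \ker g+f(A)$. So image equals kernel.

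\textbf{Exactness at $\coker(g\circ f)$.} The image of $\coker f$ is $g(B)/gf(A)$. This is exactly the kernel of the projection $C/gf(A)\to C/g(B)$.

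None of these steps is a real obstacle. The only point needing care is the well-definedness of the induced maps on cokernels, which I handle in the first paragraph. This is the standard snake-type lemma for a composite of homomorphisms.
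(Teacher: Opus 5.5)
Your proof is correct and complete. The induced maps are well defined for exactly the reasons you give, and each of the six exactness checks holds. In particular, image and kernel at $\ker g$ are both $f(A)\cap\ker g$, at $\coker f$ both are $(\ker g+f(A))/f(A)$, and at $\coker(g\circ f)$ both are $g(B)/gf(A)$.

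The paper gives no proof of its own here. It quotes the lemma as a standard tool from Bright's notes on $2$-descent, so there is no argument in the paper to compare with.

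For reference, the usual textbook route applies the snake lemma to a commutative diagram with exact rows:
\begin{itemize}
\item top row $0\to A\to A\oplus B\to B\to 0$, with maps $a\mapsto(a,f(a))$ and $(a,b)\mapsto b-f(a)$;
\item bottom row $0\to B\to C\oplus B\to C\to 0$, with maps $b\mapsto(g(b),b)$ and $(c,b)\mapsto g(b)-c$;
\item vertical maps $f$, $(a,b)\mapsto(gf(a),b)$ and $g$.
\end{itemize}
The middle vertical map has kernel $\ker(g\circ f)\oplus 0$ and cokernel canonically isomorphic to $\coker(g\circ f)$. The snake lemma then yields the six-term sequence, with the stated maps up to sign, which does not affect exactness.

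That route is shorter if the snake lemma is taken as known, and it produces the connecting map $\ker g\to\coker f$ automatically. Your direct chase instead is fully self-contained and makes every image and kernel explicit, which suits an expository recap like the paper's.
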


\begin{proposition}\label{rank10}
 Let $E : y^2 = x(x^2 + x - 1)$
be the modular curve $X_1(2,10)$, and let $p \equiv 3 \pmod{4}$ be a prime.  
Let $E_{-p}$ denote the $(-p)$-quadratic twist of $E$, given by
\begin{equation*}
E_{-p} : y^2 = x(x^2 - px - p^2).
\end{equation*}
Then the Mordell--Weil rank of $E_{-p}$ over $\Q$ satisfies
\begin{equation*}
\rk(E_{-p}(\Q)) = 0 \quad \text{if } p \equiv 2,3 \pmod{5},
\end{equation*}
and
\begin{equation*}
0 \leq \rk(E_{-p}(\Q)) \leq 1 \quad \text{if } p \equiv 1,4 \pmod{5}.
\end{equation*}

\end{proposition}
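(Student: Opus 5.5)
Let me set up a 2-isogeny descent on $E_{-p}: y^2=x(x^2+ax+b)$ with $a=-p$ and $b=-p^2$. By Lemma \ref{2isogenyconstruction} the isogenous curve is
\begin{equation*}
E'_{-p}: y^2=x(x^2+2px+5p^2),
\end{equation*}
since $a'=2p$ and $b'=a^2-4b=5p^2$. I will use the standard formula
\begin{equation*}
2^{\rk(E_{-p}(\Q))} = \frac{|\varphi(E'_{-p}(\Q))|\cdot|\varphi'(E_{-p}(\Q))|}{4},
\end{equation*}
where $\varphi'$ is the analogous map from $E_{-p}(\Q)$ to $\Kum$, given by $(0,0)\mapsto \overline{b'}=\overline{5}$ (up to convention, $\overline{a'^2-4b'}$). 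This formula comes from Lemma \ref{Q/Q^2} together with the kernel--cokernel sequence (Lemma \ref{kercoker}) applied to $\hat\phi\circ\phi=[2]$, using $E_{-p}(\Q)[2]=\Z/2\Z$. The discriminant $p^2+4p^2=5p^2$ is not a square, so only one rational $2$-torsion point exists. So the task is to bound the two images.

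\textbf{Image of $\varphi'$ on $E_{-p}(\Q)$.} The candidates are square-free divisors $r$ of $b=-p^2$, i.e. $r\in\{\pm1,\pm p\}$, and the relevant equation is
\begin{equation*}
r^2l^4 - p r l^2m^2 - p^2 m^4 = r n^2 .
\end{equation*}
The class $\overline{1}$ lies in the image trivially. The point $(0,0)$ contributes $\overline{-p^2}=\overline{-1}$. Since the image is a group, it contains $\{\pm1\}$, and it contains $\pm p$ either both or neither. I aim to exclude $r=p$. Dividing by $p$ gives
\begin{equation*}
pl^4-pl^2m^2-pm^4=n^2,
\end{equation*}
so $p\mid n$. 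Write $n=pn_1$, giving $l^4-l^2m^2-m^4=pn_1^2$. If $p\mid m$ then $p\mid l$, contradicting $\gcd(l,m)=1$. So $p\nmid m$, and modulo $p$ we get $(l^2/m^2)^2-(l^2/m^2)-1\equiv0$. This says $5$ is a square mod $p$, i.e. $p\equiv\pm1\pmod 5$. For $p\equiv2,3\pmod5$ this is impossible, so the image is $\{\pm1\}$, of size $2$. For $p\equiv1,4\pmod5$ I would instead use a $2$-adic or real argument, or simply allow the image to have size $2$ or $4$.

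\textbf{Image of $\varphi$ on $E'_{-p}(\Q)$.} The candidates are $r\mid 5p^2$ square-free, i.e. $r\in\{\pm1,\pm5,\pm p,\pm5p\}$, and the equation is
\begin{equation*}
r^2l^4+2prl^2m^2+5p^2m^4=rn^2 .
\end{equation*}
The left side equals $(rl^2+pm^2)^2+4p^2m^4>0$, so $r<0$ is impossible. The torsion point $(0,0)$ gives $\overline{5}$. I then need to exclude $r=p$, which also excludes $5p$. With $r=p$ the equation becomes $pl^4+2pl^2m^2+5pm^4=n^2$. As before $p\mid n$, and dividing by $p$ gives $l^4+2l^2m^2+5m^4=pn_1^2$, i.e. $(l^2+m^2)^2+4m^4=pn_1^2$. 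Since $p\equiv3\pmod4$, $p$ dividing a sum of two squares forces $p\mid m$ and $p\mid l^2+m^2$, hence $p\mid l$, a contradiction. So $|\varphi(E'_{-p}(\Q))|=2$ for every $p\equiv3\pmod4$.

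\textbf{Conclusion and main obstacle.} Combining the two images gives $2^{\rk}=2\cdot2/4=1$ when $p\equiv2,3\pmod5$, and $2^{\rk}\le 2\cdot4/4=2$ when $p\equiv1,4\pmod5$. These are exactly the two claims of the proposition. The main obstacle is keeping the conventions for $\varphi$ and $\varphi'$, and the exact rank formula, consistent with Lemma \ref{Q/Q^2}. A mismatch in which isogenous curve carries which image would swap the two computations, though the arithmetic arguments above are unaffected.
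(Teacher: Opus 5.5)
Your proposal is correct and follows the paper's proof essentially step for step. You use the same $2$-isogeny descent to $E'_{-p}: y^2=x(x^2+2px+5p^2)$, the same sum-of-two-squares argument with $p\equiv 3\pmod 4$ to cut $\varphi(E'_{-p}(\Q))$ down to $\{\overline{1},\overline{5}\}$, and the same quadratic-residuosity obstruction for $5$ modulo $p$ to cut $\varphi'(E_{-p}(\Q))$ down to $\{\overline{\pm 1}\}$ when $p\equiv 2,3\pmod 5$. (Two minor points: in your setup, $\varphi'((0,0))$ should be $\overline{b}=\overline{-p^2}=\overline{-1}$ rather than $\overline{5}$, which is the value you correctly use later; and using $\overline{5}$ in the image to dispose of $r=5p$, instead of checking it separately as the paper does, is a harmless shortcut.)
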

\begin{proof}
 Applying $2$-descent via the $2$-isogeny between $E_{-p}$ and $E'_{-p}$ as 
 described in Lemma \ref{2isogenyconstruction}, we obtain
\begin{align*}
    E_{-p} &: y^2 = x(x^2 - px - p^2),\\
    E'_{-p} &: y^2 = x(x^2 + 2px + 5p^2).
\end{align*}

To determine the image $\varphi(E'_{-p}(\Q))$ in $\Kum$, we need to solve the 
equation
\begin{equation*}
    r^2l^4 + 2prl^2m^2 + 5p^2m^4 = rn^2,
\end{equation*}
where $r$ divides $5p^2$. Since $r$ is assumed to be square-free, it suffices to consider
\begin{equation*}
    r = \pm 1, \; \pm 5, \;\pm p,\; \pm 5p.
\end{equation*}
Because the left-hand side is always positive, we may discard the negative values. Moreover, by the definition of the map $\varphi$ described in Lemma \ref{Q/Q^2}, 
the classes $\overline{1}$ and $\overline{5}$ already lie in its image.
 Hence it remains to check the cases $r=p$ and $r=5p$.

If $p \mid r$, then $p^2$ divides the left-hand side of the equation, which implies that $p \mid n$. Writing $n = pk$, we proceed as follows.

If $r=p$, then we obtain
\begin{equation*}
    l^4 + 2l^2m^2 + 5m^4 = pk^2.
\end{equation*}
Reducing modulo $p$, this yields
\begin{equation*}
    (l^2 + m^2)^2 + 4m^4 \equiv 0 \pmod{p}.
\end{equation*}
At this point, the assumption $p \equiv 3 \pmod{4}$ becomes crucial. Since $-1$ is not a quadratic residue modulo $p$, the congruence implies that $p$ divides both $l$ and $m$. This contradicts the assumption $\gcd(l,m)=1$.

If $r=5p$, then we obtain
\begin{equation*}
    5l^4 + 2l^2m^2 + m^4 = 5pk^2.
\end{equation*}
Reducing modulo $p$, this yields
\begin{equation*}
    (l^2 + m^2)^2 + 4l^4 \equiv 0 \pmod{p}.
\end{equation*}
Since $-1$ is not a quadratic residue modulo $p$, the congruence implies that $p$ divides both $l$ and $m$. This contradicts the assumption $\gcd(l,m)=1$.

 Hence
\begin{equation*}
 E'_{-q}(\Q)/ \phi( E_{-q}(\Q)) \cong    \varphi(E'_{-q}(\Q))\cong \Z/2\Z.
\end{equation*}

We now turn to the computation of the image of $\varphi'$ in $\Kum$.  
In order to determine $\varphi'(E_{-p})$, we must solve the equation
\begin{equation*}
    r^2l^4 - prl^2m^2 - p^2m^4 = rn^2,
\end{equation*}
where $r$ divides $-p^2$. Since $r$ is square-free, it suffices to consider the cases
$r = \pm 1, \pm p$. We describe $\varphi'$ similarly to the definition of 
$\varphi$ in Lemma \ref{Q/Q^2}. By the definition of the map $\varphi'$, the 
classes $\overline{1}$ and $\overline{-1}$ lie in the image $\varphi'(E_{-p})$. 
Therefore, it remains to determine whether $\overline{p}$ or $\overline{-p}$ lies
in the image.

Since $\varphi'$ is a group homomorphism and $\overline{-1}$ already belongs to the image,
it is enough to check one of these two cases.

If $r = p$, then we obtain
\begin{equation*}
    p^2l^4 - p^2l^2m^2 - p^2m^4 = pn^2.
\end{equation*}
It follows that $p^2$ divides the left-hand side, and hence $p$ divides $n$. Setting $n = pk$, we obtain
\begin{equation*}
    l^4 - l^2m^2 - m^4 = pk^2.
\end{equation*}
Reducing modulo $p$, this yields
\begin{equation*}
    (2l^2 - m^2)^2 - 5 m^4 \equiv 0 \pmod{p}.
\end{equation*}
When $p \equiv \pm 2 \pmod{5}$, no solution exists because $5$ is not a quadratic residue modulo $p$. This establishes that the image of $\varphi'$ has exactly $2$ elements in this case, and by the kernel-cokernel sequence given in Lemma \ref{kercoker}, we conclude that $\rk_(E_{-p}(\Q)) = 0$.  

On the other hand, if $p \equiv \pm 1 \pmod{5}$, then the image of $\varphi'$ has either $2$ or $4$ elements, which implies
\begin{equation*}
    0 \leq \rk_(E_{-p}(\Q)) \leq 1.
\end{equation*}
In this case, our \texttt{SageMath} calculations suggest that the rank is exactly $1$, and 
we will prove this statement conditionally on the parity conjecture in Section 
\ref{conditionalsection}.
\end{proof}

\begin{corollary}\label{prime10}
   Let $p$ be a prime such that $p \equiv 3,7 \pmod{20}$. Then there does not exist an elliptic curve $E/\Q$ such that
\begin{equation*}
    E(\Q(\sqrt{-p}))_{\mathrm{tors}} \cong \Z/2\Z \times \Z/10\Z.
\end{equation*}
\end{corollary}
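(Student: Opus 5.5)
The plan is to combine Proposition \ref{rank10} with the Corollary following Theorem \ref{noncuspidal}. First I translate the congruence condition. A prime $p \equiv 3,7 \pmod{20}$ satisfies both $p \equiv 3 \pmod 4$ and $p \equiv 2,3 \pmod 5$. By the Chinese remainder theorem, the residues $3$ and $7$ modulo $20$ are exactly the classes that are $3 \bmod 4$ and $3$, resp.\ $2$, modulo $5$. Hence Proposition \ref{rank10} applies in its first case and gives $\rk(X_1(2,10)_{-p}(\Q)) = 0$.

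Next I pass to the quadratic field. We have
\begin{equation*}
\rk(X_1(2,10)(\Q(\sqrt{-p}))) = \rk(X_1(2,10)(\Q)) + \rk(X_1(2,10)_{-p}(\Q)),
\end{equation*}
and $\rk(X_1(2,10)(\Q)) = 0$ (LMFDB). So the rank of $X_1(2,10)$ over $K=\Q(\sqrt{-p})$ is zero, and $X_1(2,10)(K)$ is finite, consisting of torsion points only. By Theorem \ref{noncuspidal}, any non-cuspidal quadratic point would have infinite order. Therefore every point of $X_1(2,10)(K)$ is a cusp or a rational point. The rational points of $X_1(2,10)$ are cusps as well, since by Mazur's theorem no $E/\Q$ has $\Z/2\Z\times\Z/10\Z$ torsion over $\Q$.

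Finally I conclude. If $E/\Q$ had $E(K)_{\mathrm{tors}} \cong \Z/2\Z\times\Z/10\Z$, then $E$, viewed over $K$, together with a choice of torsion basis would give a non-cuspidal point of $X_1(2,10)(K)$, which is a contradiction. This is exactly the first statement of the Corollary applied to the structure $\Z/2\Z\times\Z/10\Z$ with $d=-p$. Base-changed rational curves are a special case of curves over $K$, so the claim follows.

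There is essentially no obstacle: all the work was done in Proposition \ref{rank10}. The only care needed is checking the congruence bookkeeping ($3,7 \bmod 20$ corresponds to $p\equiv 3 \bmod 4$ together with $p \equiv \pm 2 \bmod 5$), and noting that the moduli interpretation says a $K$-point of $X_1(2,10)$ corresponds to a curve with torsion containing $\Z/2\Z\times\Z/10\Z$. This suffices for non-existence.
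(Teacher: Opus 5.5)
Your proposal is correct and follows the paper's route exactly. The condition $p\equiv 3,7 \pmod{20}$ is precisely the first case of Proposition \ref{rank10} ($p\equiv 3 \pmod 4$ and $p\equiv 2,3 \pmod 5$), which gives $\rk(X_1(2,10)_{-p}(\Q))=0$, and the Corollary following Theorem \ref{noncuspidal} then rules out this torsion over $\Q(\sqrt{-p})$, in particular for curves defined over $\Q$. Your extra remark that the rational points of $X_1(2,10)$ are cusps by Mazur's theorem is a small bit of care that the paper leaves implicit.
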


\begin{proposition}\label{multipleprimesfactored}
   Let $E: y^2 = x(x^2 + x - 1)$ be the equation of the modular curve $X_1(2,10)$.  
Let
\begin{equation*}
    d = \prod_{i=1}^n (-p_i)
\end{equation*}
for some integer $n \geq 1$, where $p_i$ are distinct primes satisfying $p_i \equiv 3,7 \pmod{20}$ for all $1 \leq i \leq n$.  
Let $E_d$ denote the $d$-quadratic twist of $E$, given by
\begin{equation*}
    E_d: y^2 = x(x^2 + dx - d^2).
\end{equation*}
Then
\begin{equation*}
    \rk(E_d(\Q)) = 0.
\end{equation*}

\end{proposition}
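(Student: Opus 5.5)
We run a $2$-isogeny descent for $E_d$, as in the proof of Proposition \ref{rank10}, and show that both descent images have exactly two elements. Write $D=\prod_{i=1}^n p_i$, so $d=(-1)^nD$. By Lemma \ref{2isogenyconstruction} with $a=d$, $b=-d^2$, the isogenous curve is
\begin{equation*}
E'_d : y^2 = x(x^2-2dx+5d^2).
\end{equation*}

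\textbf{Image of $\varphi$ on $E'_d(\Q)$.} We must decide which square-free $r\mid 5d^2$ admit a solution of
\begin{equation*}
r^2l^4-2drl^2m^2+5d^2m^4=rn^2, \qquad \gcd(l,m)=1.
\end{equation*}
The left-hand side equals $(rl^2-dm^2)^2+4d^2m^4>0$, so $r>0$. Hence $r=s$ or $r=5s$ with $s\mid D$. The classes $\overline 1$ and $\overline 5$ lie in the image: $\overline 1=\varphi(O)$, and $\varphi((0,0))=\overline{5d^2}=\overline5$.

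Suppose $s>1$, pick a prime $q=p_i\mid s$, and write $r=qr_1$, $d=qd_1$ with $q\nmid r_1d_1$. Then $q^2$ divides the left-hand side, so $q\mid n$. Dividing out and reducing modulo $q$ gives
\begin{equation*}
(r_1l^2-d_1m^2)^2+4d_1^2m^4\equiv 0 \pmod q.
\end{equation*}
Since $q\equiv 3\pmod 4$, $-1$ is a non-residue mod $q$. So both squares vanish mod $q$: $q\mid m$, and then $q\mid r_1l^2$, hence $q\mid l$. This contradicts $\gcd(l,m)=1$. Therefore $\varphi(E'_d(\Q))=\{\overline1,\overline5\}$.

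\textbf{Image of $\varphi'$ on $E_d(\Q)$.} Here we need square-free $r\mid d^2$, so $r=\pm s$ with $s\mid D$, and the equation
\begin{equation*}
r^2l^4+drl^2m^2-d^2m^4=rn^2.
\end{equation*}
The image contains $\overline1$ and $\varphi'((0,0))=\overline{-d^2}=\overline{-1}$.

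Suppose $s>1$ and take $q\mid s$ as before. Again $q\mid n$, and after dividing by $q^2$ we reduce modulo $q$. Multiplying by $4$ and completing the square gives
\begin{equation*}
(2r_1l^2+d_1m^2)^2-5d_1^2m^4\equiv 0\pmod q.
\end{equation*}
Now $q\equiv \pm2\pmod 5$, so by quadratic reciprocity $5$ is a non-residue mod $q$. As before this forces $q\mid m$ and then $q\mid l$, a contradiction. Hence $\varphi'(E_d(\Q))=\{\overline1,\overline{-1}\}$.

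\textbf{Conclusion.} The kernel-cokernel sequence of Lemma \ref{kercoker}, applied to $\hat\phi\circ\phi=[2]$, gives the standard identity
\begin{equation*}
2^{\rk(E_d(\Q))}=\frac{|\varphi(E'_d(\Q))|\cdot|\varphi'(E_d(\Q))|}{4}.
\end{equation*}
This identity is used in the same way in Proposition \ref{rank10}. With both images of size $2$, it yields $\rk(E_d(\Q))=0$.

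The main obstacle is the local step when $s$ is composite. One has to check carefully that the divisibility $q\mid n$ and the cancellation leave a congruence in which the cofactors $r_1,d_1$ are units mod $q$; that is, that the other primes dividing $r$ and $d$ do not interfere. Since all the $p_i$ are distinct, $q$ divides $r$ and $d$ exactly once each, so this should go through. I would write out this reduction explicitly, including the possibility $q\mid m$, before concluding.
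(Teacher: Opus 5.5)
Your proposal is correct and follows essentially the same route as the paper: a $2$-isogeny descent in which $p_i\equiv 3\pmod 4$ rules out any prime of $d$ in the image of $\varphi$ via the sum-of-two-squares rewriting, and $p_i\equiv\pm 2\pmod 5$ does the same for $\varphi'$ via $(2r_1l^2+d_1m^2)^2-5d_1^2m^4$, followed by the kernel--cokernel count. Your worry about composite $s$ is resolved exactly as you indicate, and as the paper does by writing $d=pf$, $r=ps$ with $p\nmid fs$: squarefreeness of $r$ and $d$ makes the cofactors units mod $q$.
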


\begin{proof}
Applying $2$-descent using the $2$-isogeny between $E_{d}$ and $E'_{d}$ gives
\begin{align*}
     E_{d}:y^2 &= x(x^2+dx-d^2),\\
     E'_{d}:y^2 &= x(x^2-2dx+5d^2).
\end{align*}
To find the image $\varphi(E'_{d})$, we need to solve the equation
\begin{equation*}
    r^2l^4 - 2d r l^2 m^2 + 5 d^2 m^4 = r n^2,
\end{equation*}
where $r$ divides $5d^2$. By definition of the map, $\overline{1}$ and $\overline{5}$ belong to the image of $\varphi$. We will show that the image consists exactly of these two elements.  

Since the right-hand side is positive, $r>0$ must hold. Assume that a prime $p$ dividing $d$ also divides $r$. Write $d = pf$ and $r = ps$, yielding
\begin{equation*}
    p^2s^2l^4 - 2 p^2fsl^2m^2 + 5p^2f^2m^4 = psn^2.
\end{equation*}
The left-hand side must be divisible by $p^2$ since the right-hand side is divisible by $p^2$. Both $d$ and $r$ are square-free, so $p$ does not divide $f$ or $s$. Hence, $p$ must divide $n$; write $n = pk$ and divide both sides by $p^2$ to obtain
\begin{equation*}
   s^2l^4 - 2fs ^2m^2 + 5f^2m^4 = psk^2.  
\end{equation*}
The left-hand side can be rewritten as a sum of squares:
\begin{equation*}
     s^2l^4 - 2fsl^2m^2 + 5f^2m^4 = (sl^2 - fm^2)^2 + (2fm^2)^2. 
\end{equation*}
Using the assumption $p \equiv 3 \pmod{4}$, we deduce that $p$ divides both $s l^2 - f m^2$ and $2 f m^2$. Since $p$ does not divide $s$ or $f$, it follows that $p$ divides $l$ and $m$, contradicting $\gcd(l,m) = 1$.  

We conclude that $r$ is not divisible by any prime factor of $d$. As $r$ is a positive square-free integer dividing $5 d^2$, it follows that $r = 1$ or $r = 5$, so the image $\varphi(E'_{d})$ has exactly two elements.  

Next, to find the image $\varphi'(E_{d})$, we solve
\begin{equation*}
    r^2 l^4 + d r l^2 m^2 - d^2 m^4 = r n^2,
\end{equation*}
where $r$ divides $-d^2$. By definition, $\overline{1}$ and $\overline{-1}$ are in the image of $\varphi'$. We will show that these are the only elements.  

Assume that a prime $p$ dividing $d$ also divides $r$, and write $d = p f$, $r = p s$. Then
\begin{equation*}
    p^2 s^2 l^4 + p^2 f s l^2 m^2 - p^2 f^2 m^4 = p s n^2.
\end{equation*}
As before, $p^2$ divides the left-hand side, so $p$ must divide $n$; write $n = p k$ and divide both sides by $p^2$:
\begin{equation*}
   s^2 l^4 + f s l^2 m^2 - f^2 m^4 = p s k^2.  
\end{equation*}
The left-hand side can be rewritten as
\begin{equation*}
   (2 s l^2 + f m^2)^2 - 5 (f m^2)^2 = 4 s^2 l^4 + 4 f s l^2 m^2 - 4 f^2 m^4 = 4 p s k^2.  
\end{equation*}
The assumption $p \equiv \pm 2 \pmod{5}$ ensures that $5$ is not a quadratic residue modulo $p$, forcing $p$ to divide $2 s l^2 + f m^2$ and $f m^2$, hence also $s l^2$. Since $p$ does not divide $f$ or $s$, it must divide $l$ and $m$, contradicting $\gcd(l,m) = 1$. Therefore, $r = \pm 1$, and the image $\varphi'(E_{d})$ has exactly two elements.  

By the kernel-cokernel sequence given in Lemma \ref{kercoker}, we conclude that $ \mathrm{rk}(E_d(\Q)) = 0.$
\end{proof}

\begin{remark}
An analogous theorem does not hold for $X_1(2,12)$. As a counterexample,
we have $\mathrm{rk}_\Q(E_d) = 2$ for $d = -11 \cdot 59 \cdot 83$, as can be 
verified using our \texttt{SageMath} calculations.
\end{remark}   

\begin{corollary}\label{noZ2Z10overmultiple}
  Let $  d = \prod_{i=1}^n (-p_i)$
for some integer $n \geq 1$, where the primes $p_i$ satisfy $p_i \equiv 3,7 \pmod{20}$ for all $1 \leq i \leq n$. Then there does not exists an elliptic curve $E/\Q$ such that
\begin{equation*}
    E(\Q(\sqrt{d}))_{\mathrm{tors}} \cong \Z/2\Z \times \Z/10\Z.
\end{equation*}
\end{corollary}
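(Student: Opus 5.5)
The plan is to deduce Corollary \ref{noZ2Z10overmultiple} directly from Proposition \ref{multipleprimesfactored} together with the Corollary following Theorem \ref{noncuspidal}.

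\textbf{Step 1 (rank over $\Q(\sqrt{d})$).} For $d=\prod_{i=1}^n(-p_i)$ with distinct primes $p_i\equiv 3,7\pmod{20}$, Proposition \ref{multipleprimesfactored} gives $\rk(X_1(2,10)_d(\Q))=0$. From LMFDB we have $\rk(X_1(2,10)(\Q))=0$. The rank decomposition
\begin{equation*}
\rk(X_1(2,10)(\Q(\sqrt{d})))=\rk(X_1(2,10)(\Q))+\rk(X_1(2,10)_d(\Q))
\end{equation*}
then shows that $X_1(2,10)(\Q(\sqrt{d}))$ is finite.

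\textbf{Step 2 (no curves over $\Q(\sqrt{d})$).} Before invoking the earlier Corollary, I would confirm that $\Q(\sqrt{d})$ is genuinely quadratic. This holds because $d$ is squarefree (the $p_i$ are distinct) and $d\neq 1$. Theorem \ref{noncuspidal} says every non-cuspidal quadratic point on $X_1(2,10)$ has infinite order. Since $X_1(2,10)(\Q(\sqrt{d}))$ is finite, all its points are cusps. Hence no elliptic curve $E/\Q(\sqrt{d})$ has $E(\Q(\sqrt{d}))_{\mathrm{tors}}\cong\Z/2\Z\times\Z/10\Z$.

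\textbf{Step 3 (rational curves).} Any $E/\Q$ can be viewed as a curve over $\Q(\sqrt{d})$ by base change, so Step 2 already excludes it.

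\textbf{Main obstacle.} The argument is essentially bookkeeping, and the only delicate point is the hypothesis on the primes. Proposition \ref{multipleprimesfactored} requires the primes to be distinct, while the corollary's statement omits the word ``distinct.'' If primes were repeated, $d$ would differ from the product of the distinct $p_i$ by a square factor. Twisting by $d$ and by its squarefree part give isomorphic curves over $\Q$, and they define the same field. However, the sign $(-1)^n$ could change under this reduction, and the squarefree part might even be a positive square.

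I would therefore interpret the statement with the $p_i$ distinct, as in Proposition \ref{multipleprimesfactored}, and note this assumption explicitly.
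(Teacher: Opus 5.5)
Your proposal is correct and is the argument the paper intends; the paper gives no separate proof. Proposition~\ref{multipleprimesfactored} gives $\rk(X_1(2,10)_d(\Q))=0$, the Corollary to Theorem~\ref{noncuspidal} turns this into non-existence over $\Q(\sqrt{d})$, and base change then rules out curves over $\Q$.

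One correction to your side remark: repeated primes cannot change the sign, because $(-p)^2=p^2$ is a square. So $d$ is always a square times $\prod(-q_j)$, taken over the primes $q_j$ that occur an odd number of times. If there are no such primes, then $\Q(\sqrt{d})=\Q$ and Mazur's theorem applies, so the statement holds without assuming the primes are distinct.
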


\begin{proposition}\label{rank12}
 Let $ E: y^2 = x(x^2 - x + 1)$
be the equation of the modular curve $X_1(2,12)$. Let $p \equiv 11 \pmod{12}$ be a prime, and let $E_{-p}$ denote the $-p$-quadratic twist of $E$ given by
\begin{equation*}
    E_{-p}: y^2 = x(x^2 + px + p^2).
\end{equation*}
Then the Mordell--Weil rank of $E_{-p}$ over $\Q$ satisfies
\begin{equation*}
\rk(E_{-p}(\Q)) = 0 \quad \text{if } p \equiv 11 \pmod{24},
\end{equation*}
and
\begin{equation*}
0 \leq \rk(E_{-p}(\Q)) \leq 1 \quad \text{if } p \equiv 23 \pmod{24}.
\end{equation*}
\end{proposition}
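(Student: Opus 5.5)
The plan is to run the same $2$-isogeny descent as in Proposition \ref{rank10}, now with $a=p$ and $b=p^2$. Lemma \ref{2isogenyconstruction} then gives $a'=-2p$ and $b'=p^2-4p^2=-3p^2$, so
\begin{align*}
E_{-p} &: y^2 = x(x^2+px+p^2),\\
E'_{-p} &: y^2 = x(x^2-2px-3p^2) = x(x-3p)(x+p).
\end{align*}
The discriminant of $x^2+px+p^2$ is $-3p^2<0$, so $E_{-p}(\Q)[2]$ has order $2$. On the other hand, $E'_{-p}$ has full rational $2$-torsion. Combining the kernel--cokernel sequence of Lemma \ref{kercoker} with these torsion orders gives
\begin{equation*}
2^{\rk(E_{-p}(\Q))} = \frac{|\varphi(E'_{-p}(\Q))|\cdot|\varphi'(E_{-p}(\Q))|}{4}.
\end{equation*}
So the task is to compute the two images.

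\textbf{The image of $\varphi'$.} This is the easy side. We solve $r^2l^4+prl^2m^2+p^2m^4=rn^2$ with $r\in\{\pm1,\pm p\}$. The quadratic form in $(rl^2, pm^2)$ on the left has negative discriminant, so the left side is positive and $r<0$ is excluded. For $r=p$, we have $p\mid n$; dividing out gives $l^4+l^2m^2+m^4=pk^2$. Modulo $p$, the ratio $l^2/m^2$ would then be a primitive cube root of unity, which requires $p\equiv 1 \pmod 3$. This contradicts $p\equiv 2\pmod 3$ unless $p\mid l,m$, which is excluded by $\gcd(l,m)=1$. Hence $\varphi'(E_{-p}(\Q))=\{\overline1\}$. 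Consequently the rank is $0$ or $1$ according as $|\varphi(E'_{-p}(\Q))|$ is $4$ or $8$.

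\textbf{The image of $\varphi$.} Here $r$ ranges over the divisors of $-3p^2$, i.e. $\pm1,\pm3,\pm p,\pm3p$, and these classes form a group of order $8$. The image already contains $\overline{-3}$, the class of $b'$, together with the images of the $2$-torsion points $(-p,0)$ and $(3p,0)$. These are $\overline{-p}$ and $\overline{3p}$. So the image has order at least $4$, and it suffices to decide whether $\overline{p}$ lies in it.

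For $r=p$ we again have $p\mid n$, and dividing out gives
\begin{equation*}
(l^2-3m^2)(l^2+m^2) = pk^2.
\end{equation*}
The two factors have gcd dividing $4$. Since $l^2+m^2>0$, also $l^2-3m^2>0$. Because $p\equiv 3\pmod 4$, $p$ cannot divide $l^2+m^2$, so $p\mid l^2-3m^2$. I would then split by parity.

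If exactly one of $l,m$ is even, the factors are coprime, so $l^2-3m^2=pb^2$ with $b$ odd. The left side is $\equiv 1 \pmod 4$, whatever the position of the even variable, while $pb^2\equiv 3\pmod 4$. This is a contradiction.

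If $l,m$ are both odd, then $l^2+m^2\equiv 2\pmod 4$, which forces $l^2+m^2=2a^2$. Reducing modulo $p$ and using $l^2\equiv 3m^2$ gives $2a^2\equiv 4m^2\pmod p$. Thus $2$ is a square mod $p$, since $p\nmid m$. When $p\equiv 11\pmod{24}$ we have $p\equiv 3\pmod 8$, so $\left(\tfrac{2}{p}\right)=-1$, a contradiction. Hence the image has order exactly $4$ and the rank is $0$.

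When $p\equiv 23\pmod{24}$ this obstruction disappears, so the image has order $4$ or $8$, giving $0\le \rk(E_{-p}(\Q))\le 1$.

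The main obstacle I expect is the $r=p$ case. Plain reduction mod $p$ gives nothing there, because $3$ is a square mod $p$ for $p\equiv 11 \pmod{12}$. The argument instead has to combine the gcd and parity analysis with the quadratic character of $2$. I would double-check the mod-$4$ step carefully, including the subcase in which $l$ is even and $m$ is odd.
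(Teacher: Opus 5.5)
Your proposal is correct and follows the paper's proof essentially step for step. You use the same $2$-isogeny descent, the same cube-root-of-unity obstruction showing the image of $\varphi'$ is trivial, the same order-$4$ subgroup $\{\overline{1},\overline{-3},\overline{-p},\overline{3p}\}$ in the image of $\varphi$, and the same parity split for $r=p$. The one difference is in the both-odd case: you reduce $l^2+m^2=2a^2$ directly modulo $p$ using $l^2\equiv 3m^2$ to force $\left(\frac{2}{p}\right)=1$, which reaches the same obstruction more cleanly than the paper's parametrization of $l^2+m^2=2u^2$ and its discriminant-$32$ computation.
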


\begin{proof}
    Applying $2$-descent using the $2$-isogeny between $E_{-p}$ and $E'_{-p}$ gives
\begin{align*}
    E_{-p}:& \ y^2 = x(x^2 + px + p^2),\\
    E'_{-p}:& \ y^2 = x(x^2 - 2px - 3p^2).
\end{align*}

To compute the image $\varphi'(E_{-p}(\Q))$, we need to solve
\begin{equation*}
    r^2l^4 + prl^2 m^2 + p^2 m^4 = rn^2
\end{equation*}
with $r \mid p^2$. It suffices to check $r = \pm 1, \pm p$. Since $\overline{1}$ belongs to the image and the left-hand side is always positive, we only need to consider $r=p$. In this case, $p \mid n$, so put $n = pk$ to obtain
\begin{equation*}
    l^4 + l^2 m^2 + m^4 = pk^2.
\end{equation*}
Assuming $\gcd(l,m)=1$, this equation requires $-3$ to be a quadratic residue modulo $p$. Since $p \equiv 2 \pmod{3}$, this is impossible. Hence, $\varphi'(E_{-p}(\Q))$ is trivial.

Next, for the image $\varphi(E'_{-p}(\Q))$, we solve
\begin{equation*}
    r^2l^4 - 2prl^2 m^2 - 3p^2 m^4 = rn^2
\end{equation*}
with $r \mid -3p^2$. We only need to consider $r = \pm 1, \pm 3, \pm p, \pm 3p$. By definition, $\overline{1}, \overline{-3}$ belong to the image. Furthermore, putting $l = m = 1$ and $n = 0$, we see that $\overline{-p}$ also belongs to the image. Since the image is a group, it suffices to check $r = p$, which determines whether $\rk(E_{-p}) = 0$ or $1$.

For $r = p$, let $n = pk$; then
\begin{equation*}
    l^4 - 2l^2 m^2 - 3 m^4 = (l^2 + m^2)(l^2 - 3 m^2) = pk^2.
\end{equation*}
Assuming $\gcd(l,m)=1$, we have $\gcd(l^2 + m^2, l^2 - 3m^2) = 1$ or $2$. If exactly one of $l$ or $m$ is even, the left-hand side is congruent modulo $4$ to either $l^4$ or $-3m^4$, which is always $1$, contradicting $p \equiv 3 \pmod{4}$. Therefore, both $l$ and $m$ are odd, giving
\begin{equation*}
    l^2 + m^2 = 2 u^2, \qquad l^2 - 3 m^2 = 2 p v^2
\end{equation*}
for some integers $u, v$. Parametrizing the first equation yields
\begin{equation*}
    \pm l = \alpha^2 - 2 \beta^2, \qquad \pm m = \alpha^2 - 4\alpha \beta + 2 \beta^2
\end{equation*}
with $\gcd(\alpha, \beta) = 1$. Using $p \mid l^2 - 3 m^2$, we have $l \equiv \omega m \pmod{p}$ for some $\omega$ with $\omega^2 \equiv 3 \pmod{p}$. Then
\begin{equation*}
    \alpha^2 (1 - \omega) + 4 \omega \alpha \beta - 2(1 + \omega) \beta^2 \equiv 0 \pmod{p}.
\end{equation*}
The discriminant of this quadratic is $32$, which must be a quadratic residue modulo $p$. Hence $p \equiv -1 \pmod{8}$ must hold, under the assumption $p \equiv 3 \pmod{4}$. This shows that if $p \equiv 11 \pmod{24}$, then $\rk(E_{-p}(\Q)) = 0$. If $p \equiv 23 \pmod{24}$, then $0 \leq \rk(E_{-p}(\Q)) \leq 1$.
\end{proof}

\begin{corollary}\label{prime12}
  Let $p$ be a prime such that $p\equiv 11 \pmod{24}$. Then there is no elliptic curve $E/\Q$ such that 
  \begin{equation*}
      E(\Q(\sqrt{-p}))_{\text{tors}}\cong \Z/2\Z \times \Z/12\Z.
  \end{equation*}
  \end{corollary}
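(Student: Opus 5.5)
The plan is to combine Proposition \ref{rank12} with the Corollary following Theorem \ref{noncuspidal}, in the same way Corollary \ref{prime10} follows from Proposition \ref{rank10}. Let $p \equiv 11 \pmod{24}$. Then $p \equiv 11 \pmod{12}$, so Proposition \ref{rank12} applies to the $(-p)$-quadratic twist $E_{-p}: y^2 = x(x^2+px+p^2)$ of the model $y^2 = x(x^2-x+1)$ of $X_1(2,12)$. It gives $\rk(E_{-p}(\Q)) = 0$.

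The first step is to pass to the rank over $\Q(\sqrt{-p})$. The rank decomposition used in the proof of the Corollary after Theorem \ref{noncuspidal} gives
\begin{equation*}
\rk(X_1(2,12)(\Q(\sqrt{-p}))) = \rk(X_1(2,12)(\Q)) + \rk(X_1(2,12)_{-p}(\Q)).
\end{equation*}
The first term is $0$ by LMFDB, and the second is $0$ by the previous step. Hence $X_1(2,12)(\Q(\sqrt{-p}))$ is finite, i.e. it equals its torsion subgroup.

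The second step uses Theorem \ref{noncuspidal}: every non-cuspidal quadratic point of $X_1(2,12)$ has infinite order. Therefore every point of $X_1(2,12)(\Q(\sqrt{-p}))$ is a cusp. In the language of the Corollary, there is no elliptic curve $E/\Q(\sqrt{-p})$ with $E(\Q(\sqrt{-p}))_{\mathrm{tors}} \cong \Z/2\Z \times \Z/12\Z$.

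Finally, suppose some $E/\Q$ had this torsion over $\Q(\sqrt{-p})$. Its base change to $\Q(\sqrt{-p})$ would give such a curve over the quadratic field, which is a contradiction. One small point needs care here. A curve with torsion exactly $\Z/2\Z\times\Z/12\Z$ contains a subgroup $\Z/2\Z\times\Z/12\Z$, and so yields a non-cuspidal point of $X_1(2,12)$ over $\Q(\sqrt{-p})$. This is immediate from the moduli interpretation, so I expect no genuine obstacle.

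The only substantive input is the descent in Proposition \ref{rank12}, which is already established. That descent needs $p \equiv 3 \pmod 8$ to exclude $\overline{p}$ from the image of $\varphi$, and this is why the condition $p \equiv 11 \pmod{24}$, rather than $p \equiv 11 \pmod{12}$, appears in the statement.
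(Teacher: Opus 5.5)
Your proposal is correct and is the paper's argument: the corollary follows by feeding $\rk(X_1(2,12)_{-p}(\Q))=0$ from Proposition~\ref{rank12} into the rank decomposition and Theorem~\ref{noncuspidal}, as packaged in the Corollary following that theorem. Your side remarks are consistent with the paper's reasoning: rational points are cusps, curves over $\Q$ base-change to curves over $\Q(\sqrt{-p})$, and $p\equiv 3 \pmod 8$ is what excludes $\overline{p}$ from the image of $\varphi$.
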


\begin{proposition}
      Let $E:y^2=x(x^2-x+1)$ be the equation of the modular curve $X_1(2,12)$. Let $d=pq$ for some distinct primes $p,q$ satisfying $p,q\equiv 11 \pmod{24}$. Let $E_d$ be the $d$-quadratic twist of $E$ given by the equation 
      \begin{equation*}
          E_d:y^2=x(x^2-dx+d^2).
      \end{equation*}
      Then $rk(E_d(\Q))=0$.
\end{proposition}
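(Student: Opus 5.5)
We run a complete $2$-descent through the $2$-isogeny of Lemma \ref{2isogenyconstruction}, exactly as in Propositions \ref{multipleprimesfactored} and \ref{rank12}. Here $a=-d$ and $b=d^2$, so $a'=2d$ and $b'=-3d^2$, and the isogenous curve is
\begin{equation*}
E'_d : y^2 = x(x^2+2dx-3d^2).
\end{equation*}
By the kernel--cokernel sequence of Lemma \ref{kercoker},
\begin{equation*}
2^{\rk(E_d(\Q))} = \frac{|\varphi(E'_d(\Q))|\cdot|\varphi'(E_d(\Q))|}{4}.
\end{equation*}
It therefore suffices to show that $\varphi'$ has trivial image and that $\varphi$ has image of order exactly $4$.

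\textbf{The image of $\varphi'$.} We solve $r^2l^4-drl^2m^2+d^2m^4=rn^2$ with $r\mid d^2$ square-free. Multiplying by $4$, the left-hand side becomes
\begin{equation*}
(2rl^2-dm^2)^2+3d^2m^4,
\end{equation*}
which is positive, so $r>0$. Suppose $p\mid r$, and write $r=ps$, $d=pf$ and $n=pk$. Dividing by $p^2$ gives
\begin{equation*}
(2sl^2-fm^2)^2+3(fm^2)^2\equiv 0 \pmod p.
\end{equation*}
Since $p\equiv 2\pmod 3$, $-3$ is a non-residue mod $p$, so $p\mid l$ and $p\mid m$. This contradicts $\gcd(l,m)=1$. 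The same argument applies to $q$. Hence $r=1$, and since $\varphi'((0,0))=\overline{d^2}=\overline 1$, the image of $\varphi'$ is trivial.

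\textbf{The known part of the image of $\varphi$.} We solve $r^2l^4+2drl^2m^2-3d^2m^4=rn^2$ with $r\mid -3d^2$. The classes $\overline 1$ and $\overline{-3}=\varphi((0,0))$ lie in the image. Taking $l=m=1$ and $n=0$ gives $(r+3d)(r-d)=0$, so $\overline{pq}$ and $\overline{-3pq}$ also lie in the image. Thus the image contains the subgroup $H=\{\overline1,\overline{-3},\overline{pq},\overline{-3pq}\}$. Modulo $H$, the group generated by $-1,3,p,q$ is generated by the classes of $-1$ and $p$. It therefore suffices to exclude $r=-1$, $r=p$ and $r=-p$; every other candidate differs from one of these by an element of $H$.

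\textbf{Excluding $r=-1$.} The equation reads $(l^2+dm^2)(l^2-3dm^2)=-n^2$, and we work mod $p$.
\begin{itemize}
\item If $p\nmid l$, then $l^4\equiv -n^2\pmod p$, which is impossible because $p\equiv 3\pmod 4$.
\item If $p\mid l$, then $p\nmid m$ and $p\mid n$. Dividing by $p^2$ gives $-3q^2m^4\equiv (n/p)^2\pmod p$, forcing $-3$ to be a square mod $p$, again impossible.
\end{itemize}

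\textbf{Excluding $r=\pm p$.} Putting $n=pk$ gives
\begin{equation*}
(l^2+qm^2)(l^2-3qm^2)=\pm pk^2,
\end{equation*}
and I would combine information at $q$, at $p$ and at $2$.
\begin{itemize}
\item At $q$: if $q\nmid l$, reducing mod $q$ forces $\pm p$ to be a square mod $q$. By quadratic reciprocity with $p,q\equiv 3\pmod 4$, exactly one of $p,-p$ passes this test, which kills one sign. For the other sign I would rule out $q\mid l$ separately, as in the $r=-1$ step, using that $-3$ and $-1$ are non-residues mod $q$.
\item At $p$: since $\gcd(l,m)=1$, the two factors have gcd dividing $2\cdot 3\cdot q$. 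I would split into cases according to which factor $p$ divides. If $p\mid l^2+qm^2$, then $-q$ is a square mod $p$. If $p\mid l^2-3qm^2$, then $3q$ is a square mod $p$, and since $(3/p)=1$ for $p\equiv 11\pmod{12}$, this means $q$ is a square mod $p$.
\item At $2$: a parity analysis as in Proposition \ref{rank12} forces both $l$ and $m$ to be odd. Each factor then becomes $2$ or $2p$ times a square, up to factors of $3$ and $q$. The parametrization $l^2+qm^2=2u^2$ (or its analogue) then yields a quadratic congruence mod $p$ whose discriminant involves $2$. Since $p\equiv 3\pmod 8$, $2$ is a non-residue mod $p$, which should give the contradiction.
\end{itemize}

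\textbf{Main obstacle.} The hardest step is excluding the coset of $p$. Unlike in the single-prime case, the factors $l^2+qm^2$ and $l^2-3qm^2$ can share the primes $2$, $3$ and $q$, so the bookkeeping has many subcases. Reciprocity between $p$ and $q$ decides only one sign at a time, so the obstruction must ultimately come from the $2$-adic and mod-$8$ argument, exploiting $p,q\equiv 3\pmod 8$. If that argument does not close every subcase, I would fall back on showing that the relevant homogeneous space $rz^2=r^2l^4+\dots$ has no $\Q_2$- or $\Q_p$-point by a direct Hensel-type check.
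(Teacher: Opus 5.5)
There is a genuine gap. Your reduction is sound, and in one respect cleaner than the paper's write-up. $\varphi'$ is trivial by the same mod-$p$ argument. You also correctly note that $(d,0),(-3d,0)\in E'_d(\Q)$ put $\overline{pq}$ and $\overline{-3pq}$ in the image of $\varphi$; the paper lists only $\{\overline1,\overline{-3}\}$, which is incompatible with the kernel--cokernel count. So what remains is to exclude the cosets of $-1$, $p$ and $-p$.

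\textbf{The $r=-1$ step is wrong.} With $l=pl_1$ and $n=pn_1$, the equation becomes $p^2l_1^4-2pql_1^2m^2-3q^2m^4=-n_1^2$. Modulo $p$ this gives $3q^2m^4\equiv n_1^2$, which only requires $\left(\frac 3p\right)=1$, and that holds for $p\equiv 11\pmod{12}$. You dropped the sign of $rn^2$. The same happens at $q$, so all that is forced is $pq\mid l$. Separately, for $r=p$ the correct factorization is $(l^2-qm^2)(l^2+3qm^2)=pk^2$. The form $(l^2+qm^2)(l^2-3qm^2)=-pk^2$ is the $r=-p$ equation only.

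\textbf{No local argument can close this.} The classes $\overline 3$ (hence $\overline{-1}$), $\overline p$ and $\overline{-p}$ are all everywhere locally soluble. So the $\phi$-Selmer group is all of $\langle -1,3,p,q\rangle$, and the isogeny descent alone only gives $\rk(E_d(\Q))\le 2$.
\begin{itemize}
\item For $r=3$, the quartic $(3l^2-dm^2)(l^2+dm^2)=n^2$ has the point $(1,0,\sqrt3)$ over $\mathbb R$, $\Q_p$ and $\Q_q$, since $\left(\frac3p\right)=\left(\frac3q\right)=1$. Because $d\equiv 1\pmod{24}$ is a square in $\Q_2$ and $\Q_3$, it has the point $(\sqrt d,1,2d)$ there.
\item For $r=p$, $(l,m)=(1,1)$ works $2$- and $3$-adically. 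At $p$ and at $q$, reciprocity always leaves one admissible branch.
\end{itemize}
Hence your fallback of finding a missing $\Q_2$- or $\Q_p$-point cannot succeed.

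\textbf{The missing idea is a second descent.} This is what the paper does for $r=p$:
\begin{itemize}
\item use $\gcd(l,m)=1$ to show the two quadratic factors have gcd dividing $4q$ (or $4pq$);
\item use that one factor is positive;
\item write each factor as $\delta$ times a square;
\item kill each resulting pair of conics locally.
\end{itemize}
For $r=p$ with $q\nmid l$, reciprocity forces $p\mid l^2+3qm^2$. The mixed-parity case then gives $l^2+3qm^2=pv^2$, impossible mod $8$. The both-odd case gives $l^2-qm^2=2u^2$, impossible since $\left(\frac2q\right)=-1$. The case $q\mid l$ is handled similarly.

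The coset of $-1$, which the paper's proof never addresses, yields to the same method. Write $3l^2-dm^2=\delta u^2$ and $l^2+dm^2=\delta v^2$ with $0<\delta\mid 2pq$. Reducing mod $3$ leaves $\delta\in\{2,p,q,2pq\}$. These are excluded as follows:
\begin{itemize}
\item $\delta=2$: at $p$, via $\left(\frac6p\right)=-1$ and a valuation count;
\item $\delta=p$: at $p$, since $\left(\frac qp\right)\ne\left(\frac{-q}p\right)$;
\item $\delta=q$: symmetrically at $q$;
\item $\delta=2pq$: at $p$, via $\left(\frac2p\right)=-1$.
\end{itemize}
The coset of $-p$ also goes through, provided you keep the sign $-pk^2$, so that $l^2-3qm^2<0$.
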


\begin{proof}
Applying $2$-descent via the $2$-isogeny between $E_{d}$ and $E'_{d}$ gives
\begin{align*}
    E_{d}: y^2 &= x(x^2 - dx + d^2),\\
    E'_{d}: y^2 &= x(x^2 + 2dx - 3d^2).
\end{align*}
For the image $\varphi'(E_{d}(\mathbb{Q}))$, we need to solve
\begin{equation*}
    r^2 l^4 - d r l^2 m^2 + d^2 m^4 = r n^2
\end{equation*}
with $r \mid d^2$. Since $r$ is square-free, it must divide $d$. The left-hand side is always positive, so we have $r>0$. We know that $\overline{1}$ belongs to the image. We will show that $r=1$ is the only possibility.

Without loss of generality, assume that prime $p$ divides $r$, aiming for a contradiction. Write $r = ps$, where $s=1$ or $s=q$. Then
\begin{equation*}
    p^2 s^2 l^4 - p^2 q^2 l^2 m^2 + p^2 q^2 m^4 = ps n^2.
\end{equation*}
Observe that $p^2$ divides the left-hand side, but $p^2$ does not divide $ps$, so $p$ must divide $n$. Write $n = pk$, giving
\begin{equation*}
    s^2 l^4 - q^2 l^2 m^2 + q^2 m^4 = ps k^2.
\end{equation*}
We see that $p$ divides the right-hand side. Since $p \equiv 2 \pmod{3}$, $-3$ is not a quadratic residue modulo $p$, so $p$ must divide both $s l^2$ and $q m^2$. Recall that $s=1$ or $s=q$, and $q$ is a prime distinct from $p$, so $p$ must divide both $l$ and $m$. This contradicts $\gcd(l,m)=1$.
Hence, the image of $\varphi'(E_{-p}(\mathbb{Q}))$ is trivial.

For the image $\varphi(E'_{d}(\mathbb{Q}))$, we need to solve
\begin{equation*}
    r^2 l^4 + 2 d r l^2 m^2 - 3 d^2 m^4 = r n^2
\end{equation*}
with $r \mid 3 d^2$. By the definition of the map $\varphi$, we know that $\overline{1}$ and $\overline{-3}$ belong to the image. We will show that the image $\varphi'(E_{d}(\mathbb{Q}))$ is exactly the set $\{\overline{1},\overline{-3}\}$.  

Since $r$ is a square-free integer, and $[-3]$ belongs to the image, the primes $p$ and $q$ are symmetric. Thus, it suffices to check whether $r = p$ or $r = -p$ is possible.  

If $r = p$, then
\begin{equation*}
    p^2 l^4 + 2 p^2 q l^2 m^2 - 3 p^2 q^2 m^4 = p n^2.
\end{equation*}
The left-hand side is divisible by $p^2$, so $p$ must divide $n$. Writing $n = p k$ gives
\begin{equation*}
    l^4 + 2 q l^2 m^2 - 3 q^2 m^4 = p k^2.
\end{equation*}
Now we have two cases: either $q$ divides $l$ or it does not.  

If $q$ does not divide $l$, then modulo $q$ the left-hand side reduces to $l^4$, which is a quadratic residue modulo $q$. Since $q$ does not divide $l$, it also does not divide $k$, so $p$ must be a quadratic residue modulo $q$. However, both $p$ and $q$ are congruent to $3 \pmod{4}$, so by quadratic reciprocity, $q$ is not a quadratic residue modulo $p$, leading to a contradiction.

Factorizing the left-hand side, we get
\begin{equation*}
    (l^2 - q m^2)(l^2 + 3 q m^2) = p k^2.
\end{equation*}
Since $\gcd(l,m)=1$ and $q$ is not a quadratic residue modulo $p$, we see that $p$ does not divide $l^2 - q m^2$, so it must divide $l^2 + 3 q m^2$.  

Observe that the greatest common divisor of the two factors on the left can be either $1$, if one of $l$ and $m$ is odd and the other is even, or a power of $2$, if both $l$ and $m$ are odd. Furthermore, $l^2 + 3 q m^2$ is always positive.  

Thus, we have two possibilities:  
\begin{equation*}
    l^2 - q m^2 = u^2 \quad \text{and} \quad l^2 + 3 q m^2 = p v^2
\end{equation*}
or
\begin{equation*}
    l^2 - q m^2 = 2 u^2 \quad \text{and} \quad l^2 + 3 q m^2 = 2 p v^2
\end{equation*}
for some integers $u$ and $v$.  

The second possibility cannot occur because $l^2 - q m^2$ is a quadratic residue modulo $q$, while $2$ is not a quadratic residue modulo $q$ since $q \equiv 3 \pmod{8}$. The first possibility also fails because $l^2 + 3 q m^2$ can only take values $0,1,5 \pmod{8}$, whereas $p v^2$ can only take values $0,3,4 \pmod{8}$ since both $p$ and $q$ are congruent to $3 \pmod{8}$.  
This concludes the case when $q$ does not divide $l$.

If $q$ divides $l$, we write $l = q t$, giving
\begin{equation*}
    q^4 t^4 + 2 q^3 t^2 m^2 - 3 q^2 m^4 = p k^2.
\end{equation*}
Then $q$ divides $k$, so we put $k = q h$ to obtain
\begin{equation*}
    q^2 t^4 + 2 q t^2 m^2 - 3 m^4 = p h^2.
\end{equation*}
Factorizing the left-hand side, we get
\begin{equation*}
    (q t^2 - m^2)(q t^2 + 3 m^2) = p h^2.
\end{equation*}
Since $q$ divides $l$ and $\gcd(l,m) = 1$, we know that $q$ does not divide $m$. Then modulo $q$, the right-hand side is congruent to $-3 m^4$, which is not a quadratic residue since $q \equiv 2 \pmod{3}$. Therefore, $p$ is not a quadratic residue modulo $q$. By quadratic reciprocity, since both $p$ and $q$ are congruent to $3 \pmod{4}$, $q$ is a quadratic residue modulo $p$.  

This implies that $p$ does not divide $q t^2 + 3 m^2$ because $-3 q$ is not a quadratic residue modulo $p$, and $p$ cannot divide both $t$ and $m$ due to $\gcd(l,m)=1$. Hence, $p$ must divide $q t^2 - m^2$. Observe that the greatest common divisor of the two factors on the left can be either $1$, if one of $t$ and $m$ is odd and the other is even, or a power of $2$, if both are odd. Furthermore, $q t^2 + 3 m^2$ is always positive.  

Thus, we have two possibilities:
\begin{equation*}
    q t^2 - m^2 = p u^2 \quad \text{and} \quad q t^2 + 3 m^2 = v^2
\end{equation*}
or
\begin{equation*}
    q t^2 - m^2 = 2 p u^2 \quad \text{and} \quad q t^2 + 3 m^2 = 2 v^2
\end{equation*}
for some integers $u$ and $v$.  

The second possibility cannot occur because $q$ does not divide $m$ and $2$ is not a quadratic residue modulo $q$. The first possibility also fails because $q t^2 + 3 m^2 \equiv 3,6,7 \pmod{8}$, while $v^2 \equiv 0,1,4 \pmod{8}$.  
This concludes the case when $q$ divides $l$. Therefore, the case $r = p$ does not work.

 If $r=-p$, then we get
\begin{equation*}
    p^2 l^4 - 2 p^2 q l^2 m^2 - 3 p^2 q^2 m^4 = p n^2.
\end{equation*}
Since $p^2$ divides the left-hand side, $p$ divides $n$. Put $n = p k$ to obtain
\begin{equation*}
    l^4 - 2 q l^2 m^2 - 3 q^2 m^4 = p k^2.
\end{equation*}
Factorizing the left-hand side gives
\begin{equation*}
    (l^2 + q m^2)(l^2 - 3 q m^2) = p k^2.
\end{equation*}

We consider two subcases: either $q$ divides $l$ or $q$ does not divide $l$.

If $q$ does not divide $l$, then the greatest common divisor of the two factors is either $1$, if one of $l$ and $m$ is odd and the other even, or a power of $2$, if both are odd. Moreover, modulo $q$, the left-hand side is congruent to $l^4$, and $q$ does not divide $k$, so $p$ is a quadratic residue modulo $q$. By quadratic reciprocity (since $p, q \equiv 3 \pmod{4}$), $q$ is a quadratic residue modulo $p$, so $-q$ is not. Therefore, $p$ must divide the second factor, $l^2 - 3 q m^2$, because it cannot divide the first factor.  

Since $l^2 + q m^2$ is positive, we have two possibilities:
\begin{equation*}
    l^2 + q m^2 = u^2 \quad \text{and} \quad l^2 - 3 q m^2 = p v^2
\end{equation*}
or
\begin{equation*}
    l^2 + q m^2 = 2 u^2 \quad \text{and} \quad l^2 - 3 q m^2 = 2 p v^2
\end{equation*}
for some integers $u,v$.  

The second option cannot occur because $q$ does not divide $l$ and $2$ is not a quadratic residue modulo $q$. For the first option, solving $l^2 + q m^2 = u^2$ with $\gcd(l,m) = 1$, we can parametrize all solutions as
\begin{equation*}
    \pm l = \alpha^2 - q \beta^2, \quad \pm u = \alpha^2 + q \beta^2, \quad \pm m = 2 \alpha \beta
\end{equation*}
if one of $\alpha, \beta$ is even, or
\begin{equation*}
    \pm l = \frac{\alpha^2 - q \beta^2}{2}, \quad \pm u = \frac{\alpha^2 + q \beta^2}{2}, \quad \pm m = \alpha \beta
\end{equation*}
if both $\alpha$ and $\beta$ are odd, with $\gcd(\alpha, \beta) = 1$.  

Plugging into the second equation, we get
\begin{equation*}
    l^2 - 3 q m^2 = (\alpha^2 - q \beta^2)^2 - 12 q \alpha^2 \beta^2 = p v^2,
\end{equation*}
which can be reorganized as
\begin{equation*}
    (\alpha^2 + q \beta^2)^2 - 16 q \alpha^2 \beta^2 = p v^2.
\end{equation*}

At least one of $\alpha$ or $\beta$ must be odd. If one is even, then $(\alpha^2 + q \beta^2)^2 \equiv 1 \pmod{8}$, so $v$ must be odd, giving $p v^2 \equiv 3 \pmod{8}$, a contradiction. Thus, both $\alpha$ and $\beta$ are odd. Then $(\alpha^2 + q \beta^2)^2 \equiv 16 \pmod{128}$, and $-16 q \alpha^2 \beta^2 \equiv -48 \pmod{128}$, so $p v^2 \equiv -32 \pmod{128}$. This requires $v$ divisible by $8$, making $p v^2$ divisible by $64$, while the left-hand side is not. Contradiction.  

If $q$ divides $l$, then we write $l = q t$ and $k = q K$, giving
\begin{equation*}
    (q t^2 + m^2)(q t^2 - 3 m^2) = p K^2.
\end{equation*}
Then $q \nmid m$. The gcd of the two factors is either $1$ (if one of $t, m$ is odd and the other even) or a power of $2$ (if both are odd). Moreover, modulo $q$, the left-hand side is $-3 m^4$, which is not a quadratic residue since $q \equiv 2 \pmod{3}$, and $q \nmid K$, so $p$ is not a quadratic residue modulo $q$. By quadratic reciprocity, $q$ is a quadratic residue modulo $p$, so $-q$ is not, and $p$ must divide the second factor.  

We then have
\begin{equation*}
    q t^2 + m^2 = u^2, \quad q t^2 - 3 m^2 = p v^2
\end{equation*}
or
\begin{equation*}
    q t^2 + m^2 = 2 u^2, \quad q t^2 - 3 m^2 = 2 p v^2.
\end{equation*}
The second option is impossible since $q \nmid m$ and $2$ is not a quadratic residue modulo $q$.  

For the first option, at least one of $t$ or $m$ is odd. Checking modulo $4$ and $8$ leads to contradictions in all subcases (either $t$ even or $m$ even), forcing both $t$ and $m$ to be odd. Then $u$ and $v$ are even. Writing $u = 2 U$ and $v = 2 V$, we obtain
\begin{equation*}
    4 m^2 + p v^2 = u^2 \quad \implies \quad m^2 + p V^2 = U^2.
\end{equation*}
Parametrizing this with $\gcd(\alpha,\beta) = 1$ and both $\alpha, \beta$ odd, we get
\begin{equation*}
    q t^2 = u^2 - m^2 = (\alpha^2 + p \beta^2)^2 - \alpha^2 \beta^2,
\end{equation*}
which is a contradiction modulo $8$.  

Thus, both $r = p$ and $r = -p$ cases do not work. Therefore, the image $\varphi(E_d'(\mathbb{Q}))$ contains exactly two elements. Using the kernel-cokernel sequence given in Lemma \ref{kercoker}, we conclude that $\rk(E_d(\mathbb{Q})) = 0.$
\end{proof}

\begin{corollary} \label{twoprimes12}
   Let $p$ and $q$ be distinct primes satisfying $p, q \equiv 11 \pmod{24}$. Then there does not exist an elliptic curve $E/\mathbb{Q}$ such that
\begin{equation*}
    E(\mathbb{Q}(\sqrt{pq}))_{\mathrm{tors}} \cong \mathbb{Z}/2\mathbb{Z} \times \mathbb{Z}/12\mathbb{Z}.
\end{equation*}
\end{corollary}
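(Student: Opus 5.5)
The plan is to deduce Corollary \ref{twoprimes12} directly from the preceding proposition together with the Corollary following Theorem \ref{noncuspidal}, exactly as Corollaries \ref{prime10}, \ref{prime12} and \ref{noZ2Z10overmultiple} are deduced from their propositions.

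\textbf{Step 1.} Take $d=pq$ with $p,q\equiv 11 \pmod{24}$ distinct primes. By the preceding proposition, the $d$-quadratic twist $X_1(2,12)_d: y^2=x(x^2-dx+d^2)$ satisfies $\rk(X_1(2,12)_d(\Q))=0$. I will check that this model is indeed the $d$-twist of $y^2=x^3-x^2+x$: substitute $x\mapsto x/d$, $y\mapsto y/d^{2}$ into $dy^2=x^3-x^2+x$ and clear denominators.

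\textbf{Step 2.} Use the rank decomposition from the proof of the Corollary after Theorem \ref{noncuspidal}:
\begin{equation*}
\rk(X_1(2,12)(\Q(\sqrt{pq}))) = \rk(X_1(2,12)(\Q)) + \rk(X_1(2,12)_{pq}(\Q)) = 0+0 = 0 .
\end{equation*}
So $X_1(2,12)(\Q(\sqrt{pq}))$ is finite, and every point in it is torsion. By Theorem \ref{noncuspidal}, every non-cuspidal quadratic point has infinite order. Hence all $\Q(\sqrt{pq})$-points are cusps or rational points. The rational points are themselves cusps, since Mazur's theorem excludes $\Z/2\Z\times\Z/12\Z$ over $\Q$.

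\textbf{Step 3.} Conclude as follows. Suppose some $E/\Q$ had $E(\Q(\sqrt{pq}))_{\mathrm{tors}}\cong \Z/2\Z\times\Z/12\Z$. Then $E$, viewed over $\Q(\sqrt{pq})$, together with a choice of torsion points generating this structure, gives a non-cuspidal $\Q(\sqrt{pq})$-point on $X_1(2,12)$, which is a contradiction. In particular, no curve even over $\Q(\sqrt{pq})$ has this torsion, which is stronger than the claim.

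The main obstacle is already absorbed into the preceding proposition, namely the descent showing the rank is zero. The only delicate point left here is making sure the twist parameter matches. The field is $\Q(\sqrt{pq})$, and the twisting parameter is $d=pq$, which is positive. So I must confirm that the proposition's curve $y^2=x(x^2-dx+d^2)$ is the twist by $+pq$ and not by $-pq$. Under $x\mapsto x/d$ as in Step 1, the middle coefficient becomes $-d$, which is consistent.
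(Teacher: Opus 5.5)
Your argument is the paper's own. The corollary is stated there without a separate proof and follows from the rank-zero proposition for the twist $y^2=x(x^2-dx+d^2)$ with $d=pq$, combined with the corollary to the Najman--Kamienny theorem (additivity of rank over $\Q(\sqrt{d})$, and no non-cuspidal quadratic points of finite order); your check that this model is the $+pq$ twist is also correct.

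One caveat, inherited from the paper rather than introduced by you, concerns that proposition. First, the rational $2$-torsion point $(d,0)\in E'_d(\Q)$ already puts $\overline{pq}$ in the image of $\varphi$, contrary to its claim that the image is exactly $\{\overline{1},\overline{-3}\}$. Second, the class $\overline{-1}$, which is locally solvable at every place, is never excluded. So the rank-zero input is only fully justified after an extra step, for instance the parity argument of the paper's conditional section.
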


\section{Conditional Proofs of our Conjectures}\label{conditionalsection}
In this chapter, we give proofs of Conjectures \ref{con10} and \ref{con12}, 
conditional on the finiteness of the $2$-primary part of the Tate–Shafarevich 
group $\Sha(E/\mathbb{Q})$ for rational elliptic curves. We are grateful to 
Dokchitser \cite{Dokchitsernotes} for his notes on the parity conjecture.

Before using the results on the parity conjecture to prove our conjectures 
conditionally, we recall some known results about the parity conjecture. 
Firstly, we give a brief overview following the introduction of Radziwill and 
Soundararajan \cite{radsou}.  

Let $E$ be an elliptic curve defined over $\mathbb{Q}$ with conductor $N$. 
Write the associated Hasse-Weil $L$-function as
\begin{equation*}
    L(E/\mathbb{Q},s) = \sum_{n\geq 1} \frac{a(n)}{n^s},
\end{equation*}
where the coefficients are normalized such that the Hasse bound reads $|a(n)| \leq d(n)$ for all $n$, 
and so the center of the critical strip is $s=1/2$. Recall that $L(E/\mathbb{Q},s)$ has an analytic continuation
to the entire complex plane and satisfies the functional equation
\begin{equation*}
    \Lambda(E/\mathbb{Q},s) = w(E/\mathbb{Q}) \Lambda(E/\mathbb{Q},1-s),
\end{equation*}
where $w(E/\mathbb{Q})$, the root number, is $\pm1$, and
\begin{equation*}
    \Lambda(E/\mathbb{Q},s) = \Big(\frac{\sqrt{N}}{2\pi}\Big)^s \Gamma(s+\tfrac{1}{2}) L(E/\mathbb{Q},s).
\end{equation*}

Throughout this section, let $d$ be a fundamental discriminant coprime to $2N$, and 
let $\chi_d = \left(\frac{d}{\cdot}\right)$ denote the associated primitive quadratic character. 
Let $E_d$ denote the quadratic twist of $E$ by $d$. The twisted $L$-function is
\begin{equation*}
    L(E_d/\mathbb{Q},s) = \sum_{n\geq 1} \frac{a(n)\chi_d(n)}{n^s}.
\end{equation*}
If $\gcd(d, N) = 1$, then $E_d$ has conductor $N d^2$, and the completed $L$-function
\begin{equation*}
     \Lambda(E_d/\mathbb{Q},s) = \Big(\frac{\sqrt{N}|d|}{2\pi}\Big)^s \Gamma(s+\tfrac{1}{2}) L(E_d/\mathbb{Q},s)
\end{equation*}
is entire and satisfies the functional equation
\begin{equation*}
       \Lambda(E_d/\mathbb{Q},s) = w(E_d/\mathbb{Q}) \Lambda(E_d/\mathbb{Q},1-s),
\end{equation*}
with 
\begin{equation*}
    w(E_d/\mathbb{Q})  = \chi_d(-N) w(E/\mathbb{Q}).
\end{equation*}

Note that the sign of the functional equation determines the order of vanishing of the 
$L$-function at $s=1$, i.e., the analytic rank of the elliptic curve. It is a standard result that
\begin{equation*}
     (-1)^{\Rk(E(\mathbb{Q}))} = w(E/\mathbb{Q}),
\end{equation*}
where $\Rk$ denotes the analytic rank. The parity conjecture for rational elliptic curves states that
\begin{equation*}
    (-1)^{\rk(E(\mathbb{Q}))} = w(E/\mathbb{Q}),
\end{equation*}
so that
\begin{equation*}
    \Rk(E(\Q)) \equiv \rk(E(\mathbb{Q})) \pmod{2}.
\end{equation*}

This relationship between the analytic rank and quadratic twists allows us to determine 
when the rank is odd or even in our applications. For the elliptic curves $X_1(2,10)$ and 
$X_1(2,12)$, a standard \texttt{SageMath} calculation shows that their algebraic rank is zero. 
Under the parity conjecture, this implies that their analytic rank is even; indeed, the 
LMFDB confirms that their analytic rank is zero. By calculating $\chi_d(-N)$ for the discriminants 
$d$ of interest, we can determine the parity of the algebraic rank conditionally on the parity conjecture. 
Here, $N=20$ and $N=24$, respectively.

We will show that $\chi_d(-N)=-1$ for the $d$ given in Conjectures \ref{con10} and \ref{con12}, 
conditionally implying that the algebraic rank is odd. Previously, we showed that the algebraic 
rank is either $0$ or $1$ in these cases. Therefore, we do not need to assume the full Birch 
and Swinnerton-Dyer conjecture, which asserts equality of the algebraic and analytic ranks. 
Instead, it suffices to assume the parity conjecture, which asserts equality of the parities of 
these ranks.

Monsky \cite{Monsky} proved the parity conjecture for all rational elliptic curves, 
conditional on the finiteness of the $2$-primary part of the Tate–Shafarevich group 
$\Sha(E/\mathbb{Q})$. Dokchitser and Dokchitser \cite{Dokchitserparity} proved the 
parity conjecture for elliptic curves over number fields $E/K$, conditional on the finiteness 
of the $2$- and $3$-parts of $\Sha(E/K(E[2]))$, where $K(E[2])$ denotes the extension of $K$ 
obtained by adjoining the coordinates of the $2$-torsion points of $E$. In our setting, we 
do not assume the extra finiteness of the $3$-part, since we can reduce the problem 
to determining the parity of the rank for rational elliptic curves.

Therefore, in our application, we can prove our conjectures conditionally either by assuming 
the parity conjecture, or the finiteness of the $2$-primary part of $\Sha$, which implies the parity conjecture for rational elliptic curves.

\begin{proposition}[Conditional on the Parity Conjecture]
Let $p \equiv 11,19 \pmod{20}$ be a prime. Let 
\begin{equation*}
    E: y^2 = x(x^2 + x - 1)
\end{equation*}
be a model for the modular curve $X_1(2,10)$. Then $  \rk(E_{-p}(\mathbb{Q})) = 1.$
\end{proposition}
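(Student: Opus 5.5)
The plan is to combine the upper bound from Proposition \ref{rank10} with a root number computation. Since $p \equiv 11, 19 \pmod{20}$, we have $p \equiv 3 \pmod 4$ and $p \equiv 1, 4 \pmod 5$. This is exactly the second case of Proposition \ref{rank10}, which gives $0 \leq \rk(E_{-p}(\Q)) \leq 1$. It then suffices to show that $\rk(E_{-p}(\Q))$ is odd. Under the parity conjecture this is equivalent to $w(E_{-p}/\Q) = -1$.

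To compute the root number I will use the twisting formula recalled above,
\begin{equation*}
w(E_d/\Q) = \chi_d(-N)\, w(E/\Q),
\end{equation*}
with $E = X_1(2,10)$ of conductor $N = 20$ and $d = -p$.

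First I check the hypotheses. Since $p \equiv 3 \pmod 4$, the integer $-p \equiv 1 \pmod 4$ is a fundamental discriminant. Since $p \neq 2, 5$, it is coprime to $2N = 40$.

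Next, $w(E/\Q) = +1$. LMFDB records analytic rank $0$ for this curve, so $L(E/\Q, 1) \neq 0$. Hence the sign of the functional equation is $+1$ unconditionally, with no parity assumption needed at this stage.

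The main step is evaluating
\begin{equation*}
\chi_{-p}(-20) = \chi_{-p}(-1)\,\chi_{-p}(4)\,\chi_{-p}(5).
\end{equation*}
I take the three factors in turn:
\begin{itemize}
\item $\chi_{-p}(4) = 1$, since $4$ is a square coprime to $p$.
\item $\chi_{-p}(-1) = -1$, since $d = -p < 0$ means the character is odd.
\item For the odd prime $5$, $\chi_{-p}(5)$ is the Legendre symbol $\left(\frac{-p}{5}\right) = \left(\frac{-1}{5}\right)\left(\frac{p}{5}\right) = \left(\frac{p}{5}\right)$. This equals $1$ because $p \equiv \pm 1 \pmod 5$. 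Equivalently, by reciprocity it is $\left(\frac{5}{p}\right) = 1$, which is the same condition that made the descent in Proposition \ref{rank10} inconclusive.
\end{itemize}
Hence $\chi_{-p}(-20) = -1$, and therefore $w(E_{-p}/\Q) = -1$.

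By the parity conjecture, $(-1)^{\rk(E_{-p}(\Q))} = -1$, so the rank is odd. Combined with the bound $\rk(E_{-p}(\Q)) \leq 1$, this gives $\rk(E_{-p}(\Q)) = 1$. By Monsky's theorem, the same conclusion holds assuming only finiteness of $\Sha(E_{-p}/\Q)[2^\infty]$.

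The step I expect to need the most care is the sign bookkeeping in the root number formula. In particular I need to make sure the convention $w(E_d) = \chi_d(-N) w(E)$ is the correct one for $\gcd(d, N) = 1$, and that the odd character $\chi_{-p}$ contributes exactly the factor $-1$. As a sanity check, I would confirm with \texttt{SageMath} that the computed root number of $E_{-p}$ is $-1$ for a few small primes such as $p = 11, 19, 31, 59$.
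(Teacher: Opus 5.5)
Your proposal is correct and matches the paper's proof: both take the bound $0 \le \rk(E_{-p}(\Q)) \le 1$ from Proposition~\ref{rank10}, compute $w(E_{-p}/\Q) = \chi_{-p}(-20)\,w(E/\Q) = -1$ from $w(E/\Q)=1$, $\chi_{-p}(-1)=-1$ and $\chi_{-p}(5)=\left(\frac{-p}{5}\right)=1$, and conclude by the parity conjecture (or by Monsky's theorem assuming finiteness of $\Sha[2^\infty]$). Your factorization $\chi_{-p}(-1)\chi_{-p}(4)\chi_{-p}(5)$ is a slightly more explicit version of the paper's reduction $\left(\frac{-p}{-20}\right)=\left(\frac{-p}{-5}\right)=-\left(\frac{-p}{5}\right)$.
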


\begin{proof}
The conductor of $E$ is $N=20$. Since $p \equiv 3 \pmod{4}$, the discriminant of the quadratic field $\mathbb{Q}(\sqrt{-p})$ is $  d=-p.$

We have $w(E/\mathbb{Q}) = 1$, so to show $w(E_{-p}/\mathbb{Q}) = -1$, it suffices to check that 
\begin{equation*}
    \chi_{-p}(-20) = -1.
\end{equation*}
Recall that the primitive quadratic character is the Kronecker symbol. Since $20 = 2^2 \cdot 5$ and $p$ is odd, we have
\begin{equation*}
    \Big(\frac{-p}{-20}\Big) = \Big(\frac{-p}{-5}\Big),
\end{equation*}
and 
\begin{equation*}
    \Big(\frac{-p}{-5}\Big) = - \Big(\frac{-p}{5}\Big),
\end{equation*}
because $-p < 0$. 

By assumption, $p \equiv \pm 1 \pmod{5}$, so $-p$ is a quadratic residue modulo $5$. Therefore,
\begin{equation*}
    \chi_{-p}(-20) = - \Big(\frac{-p}{5}\Big) = -1.
\end{equation*}
This shows that $ \Rk(E_{-p}(\mathbb{Q}))$
is odd. Moreover, by Proposition \ref{rank10}, we know that
\begin{equation*}
    0 \leq \rk(E_{-p}(\mathbb{Q})) \leq 1.
\end{equation*}

If we assume the parity conjecture, or the finiteness of the $2$-primary part of $\Sha$, which implies the parity conjecture for rational elliptic curves, then for rational elliptic curves,
\begin{equation*}
    \Rk(E_{-p}(\mathbb{Q})) \equiv \rk(E_{-p}(\mathbb{Q})) \pmod{2}.
\end{equation*}
Since $\Rk(E_{-p}(\mathbb{Q}))$ is odd and the rank is at most $1$, it follows that
\begin{equation*}
    \rk(E_{-p}(\mathbb{Q})) = 1.
\end{equation*}

Combining this with Proposition \ref{infinitely10} gives the desired result.
\end{proof}

\begin{proposition}[Conditionally on Parity Conjecture] 
    Let $p\equiv 23 \pmod{24}$ be a prime. Let 
    \begin{equation*}
        E: y^2 = x(x^2 - x + 1)
    \end{equation*}
    be the model for the modular curve $X_1(2,12)$. Then $  \rk(E_{-p}(\mathbb{Q})) = 1.$
\end{proposition}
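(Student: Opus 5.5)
The plan follows the proof of the preceding proposition for $X_1(2,10)$ step by step: combine the upper bound on the rank from Proposition \ref{rank12} with a root number computation.

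\textbf{Setup.} First record that $E: y^2 = x(x^2-x+1)$ has conductor $N=24$ (LMFDB) and analytic rank $0$, so $w(E/\Q)=1$. Since $p \equiv 23 \pmod{24}$, we have $p\equiv 3\pmod 4$, so the fundamental discriminant of $\Q(\sqrt{-p})$ is $d=-p$. It is coprime to $2N$ because $p>3$. The twist formula therefore gives
\begin{equation*}
w(E_{-p}/\Q)=\chi_{-p}(-24)\, w(E/\Q)=\chi_{-p}(-24).
\end{equation*}

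\textbf{Root number computation.} I will show $\chi_{-p}(-24)=-1$ by factoring
\begin{equation*}
\chi_{-p}(-24)=\chi_{-p}(-1)\,\chi_{-p}(2)^3\,\chi_{-p}(3).
\end{equation*}
The three factors are evaluated as follows.
\begin{itemize}
\item $\chi_{-p}$ is the character of an imaginary quadratic field, hence odd, so $\chi_{-p}(-1)=-1$. Equivalently, the Kronecker symbol with negative top entry picks up a sign at $-1$.
\item $p\equiv 7 \pmod 8$ gives $-p\equiv 1\pmod 8$, so $\chi_{-p}(2)=1$.
\item $p\equiv 2\pmod 3$ gives $-p\equiv 1 \pmod 3$, so $\chi_{-p}(3)=\left(\frac{-p}{3}\right)=1$.
\end{itemize}
Hence $\chi_{-p}(-24)=-1$, so $w(E_{-p}/\Q)=-1$ and the analytic rank of $E_{-p}$ is odd.

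\textbf{Conclusion.} By Proposition \ref{rank12}, $p\equiv 23\pmod{24}$ gives $0\le \rk(E_{-p}(\Q))\le 1$. Assuming the parity conjecture (for instance via Monsky's result under finiteness of the $2$-primary part of $\Sha$), the algebraic rank has the same parity as the analytic rank, so it is odd. It therefore equals $1$. Combined with Proposition \ref{infinitely12}, this also yields Conjecture \ref{con12} conditionally.

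\textbf{Main obstacle.} The only delicate point is getting the Kronecker symbol conventions right in $\chi_{-p}(-24)$: the sign at $-1$ and the value at $2$ for odd discriminants. I would cross-check the result against the independent factorization into $\chi_{-p}(-1)$, $\chi_{-p}(2)$, $\chi_{-p}(3)$, and also confirm the conductor $24$ and $w(E)=1$ from LMFDB.
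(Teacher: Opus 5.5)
Your proposal is correct and follows the paper's proof. It uses the same ingredients in the same order: conductor $24$, $w(E/\Q)=1$, the twist formula $w(E_{-p}/\Q)=\chi_{-p}(-24)\,w(E/\Q)$, the bound $0\le \rk(E_{-p}(\Q))\le 1$ from Proposition \ref{rank12}, and the parity conjecture. The only difference is cosmetic: you evaluate $\chi_{-p}(-24)$ by factoring it as $\chi_{-p}(-1)\chi_{-p}(2)^3\chi_{-p}(3)$, whereas the paper reduces $\bigl(\frac{-p}{-24}\bigr)$ to $-\bigl(\frac{-p}{3}\bigr)\bigl(\frac{-p}{2}\bigr)$ by a chain of Kronecker-symbol manipulations, and both give $-1$.
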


\begin{proof}
The conductor of $E$ is $N=24$. Since $p \equiv 3 \pmod{4}$, the discriminant of the quadratic field $\mathbb{Q}(\sqrt{-p})$ is $    d = -p.$

We know that $w(E/\mathbb{Q}) = 1$, so in order to show $w(E_{-p}/\mathbb{Q}) = -1$, it suffices to check that 
\begin{equation*}
    \chi_{-p}(-24) = -1.
\end{equation*}
The primitive quadratic character is the Kronecker symbol, so we calculate
\begin{equation*}
    \Big(\frac{-p}{-24}\Big) = \Big(\frac{-p}{-6}\Big),
\end{equation*}
and  
\begin{equation*}
    \Big(\frac{-p}{-6}\Big) = - \Big(\frac{-p}{6}\Big),
\end{equation*}
since $-p < 0$. The assumption $p \equiv -1 \pmod{3}$ implies that $-p$ is a quadratic residue modulo $3$, so
\begin{equation*}
    -\Big(\frac{-p}{6}\Big) = -\Big(\frac{-p}{3}\Big)\Big(\frac{-p}{2}\Big) = - \Big(\frac{-p}{2}\Big).
\end{equation*}
Finally, the assumption $p \equiv -1 \pmod{8}$ implies that 
\begin{equation*}
    \Big(\frac{-p}{2}\Big) = 1,
\end{equation*}
so
\begin{equation*}
    \chi_{-p}(-24) = - \Big(\frac{-p}{2}\Big) = -1.
\end{equation*}

This shows that $\Rk(E_{-p}(\mathbb{Q}))$ is odd. Moreover, by Proposition \ref{rank12}, we have 
\begin{equation*}
    0 \leq \rk(E_{-p}(\mathbb{Q})) \leq 1.
\end{equation*}

If we assume the parity conjecture, or the finiteness of the $2$-primary part of $\Sha$, which implies the parity conjecture for rational elliptic curves, then for rational elliptic curves we must have
\begin{equation*}
    \Rk(E_{-p}(\mathbb{Q})) \equiv \rk(E_{-p}(\mathbb{Q})) \pmod{2}.
\end{equation*}

This implies
\begin{equation*}
    \rk(E_{-p}(\mathbb{Q})) = 1.
\end{equation*}

Combining this with Proposition \ref{infinitely12} gives the desired result.
\end{proof}

Even though our results are conditional on certain conjectures, we have 
exhibited explicit families of quadratic fields over which there exist 
infinitely many rational elliptic curves whose torsion subgroups are 
isomorphic to 
\begin{equation*}
    \mathbb{Z}/2\mathbb{Z} \times \mathbb{Z}/10\mathbb{Z} \quad \text{or} \quad 
    \mathbb{Z}/2\mathbb{Z} \times \mathbb{Z}/12\mathbb{Z}.
\end{equation*}
This, in turn, yields an infinite family of primes $p$ for which the 
desired torsion subgroups occur over the cyclotomic fields $  \mathbb{Q}(\zeta_p).$

 \section{Eliminating Possible Torsion} \label{eliminationsection}
In this section, we present several methods for eliminating torsion subgroups
that cannot be realized. Our main goal is to improve some of the elimination
results obtained earlier \cite{Omer}.

We begin with an important lemma. It is a minor improvement of \cite[Lemma 3.1]{Omer}, and its proof is essentially the same; however, it is necessary for the results in the next section. The only difference is that here we apply the same argument to $\Gal(L/K)$ when considering 
the $L$-torsion of $E/K$, instead of $\Gal(K/\Q)$ when considering the $K$-torsion of $E/\Q$.

\begin{lemma}\label{upgradedfourpointpairing}
 Let $K$ be a number field and let $L/K$ be a Galois extension. Let $E/K$ be an elliptic curve. Let $q$ be an odd prime power, and suppose 
\begin{equation*}
    E(L)[q] \cong \mathbb{Z}/q\mathbb{Z}.
\end{equation*}
Assume that all elements of $\Gal(L/K)$ have finite order.
For any $\sigma \in \Gal(L/K)$, let $\ord(\sigma)$ denote the order of $\sigma$. 

\begin{enumerate}[(1)]
    \item If $\gcd(\phi(q), \ord(\sigma)) = 1$ for all $\sigma \in \Gal(L/K)$, then
    \begin{equation*}
        E(L)[q] = E(K)[q].
    \end{equation*}
    
    \item If $\gcd(\phi(q), \ord(\sigma)) \leq 2$ for all $\sigma \in \Gal(L/K)$, then
    \begin{equation*}
        E(L)[q] \cong E_d(L)[q] = E_d(K)[q]
    \end{equation*}
    for some $d \in K$ satisfying $\sqrt{d} \in L$.
\end{enumerate}

\end{lemma}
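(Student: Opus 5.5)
The plan is to study the Galois action on the cyclic group $C := E(L)[q] \cong \Z/q\Z$. Since $E$ is defined over $K$ and $L/K$ is Galois, every $\sigma \in \Gal(L/K)$ maps $L$-points to $L$-points and preserves orders. Hence $\Gal(L/K)$ acts on $C$ by group automorphisms, which gives a homomorphism
\begin{equation*}
    \rho : \Gal(L/K) \to \operatorname{Aut}(C) \cong (\Z/q\Z)^\times .
\end{equation*}
For $q$ an odd prime power the target $(\Z/q\Z)^\times$ is cyclic of order $\phi(q)$. For each $\sigma$, the order of $\rho(\sigma)$ divides both $\ord(\sigma)$, which is finite by hypothesis, and $\phi(q)$. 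It therefore divides $\gcd(\phi(q),\ord(\sigma))$.

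For part (1) the gcd is $1$, so $\rho$ is trivial. Thus every point of $C$ is fixed by $\Gal(L/K)$, and since $L/K$ is Galois, $C \subseteq E(K)$. This gives $E(L)[q] \subseteq E(K)[q]$, and the reverse inclusion is obvious.

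For part (2) every $\rho(\sigma)$ has order at most $2$. The only element of order $2$ in the cyclic group $(\Z/q\Z)^\times$ is $-1$, so $\rho$ takes values in $\{\pm 1\}$ and is a quadratic character $\chi : \Gal(L/K) \to \{\pm1\}$.

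If $\chi$ is trivial we are back in case (1) and take $d=1$. Otherwise $\ker\chi$ is an index-$2$ subgroup. To apply Galois theory to it I would need it to be closed. Since the action factors through the finite field of definition of the $q$-torsion points, $\rho$ is continuous, so $\ker \chi$ is open. Its fixed field is then a quadratic extension $K(\sqrt d)\subseteq L$ with $d\in K$, and $\sigma(P) = \chi(\sigma) P$ for all $P\in C$, where $\chi(\sigma)$ is the sign by which $\sigma$ acts on $\sqrt d$.

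Next I would use the standard description of the twist. There is an isomorphism $\psi: E \to E_d$ defined over $K(\sqrt d)$, given for a model $y^2=f(x)$ by $(x,y)\mapsto (x, y/\sqrt d)$ on $dy^2=f(x)$, or the analogous map for a general Weierstrass model. It satisfies $\sigma(\psi(P)) = \chi(\sigma)\,\psi(\sigma P)$, because $\psi^\sigma = [-1]\circ\psi$ exactly when $\sigma$ moves $\sqrt d$. Consequently $\psi$ restricts to a group isomorphism $E(L)[q] \to E_d(L)[q]$, since it is defined over $K(\sqrt d)\subseteq L$. For $P \in C$ we then get
\begin{equation*}
    \sigma(\psi(P)) = \chi(\sigma)\psi(\chi(\sigma)P) = \psi(P),
\end{equation*}
so $\psi(C)\subseteq E_d(K)$. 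As $\psi$ is an isomorphism over $L$, $\psi(C)=E_d(L)[q]$. Therefore $E_d(L)[q] = E_d(K)[q]$ and $E(L)[q]\cong E_d(L)[q]$.

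The step needing the most care is this twist compatibility $\sigma\circ\psi = \chi(\sigma)\,\psi\circ\sigma$, together with the claim that $\ker\chi$ is closed, so that it genuinely corresponds to a quadratic subfield when $L/K$ is infinite. The rest is the elementary observation about element orders in $(\Z/q\Z)^\times$, which is exactly why $q$ must be odd: for odd $q$ this group is cyclic, so it has a unique element of order $2$.
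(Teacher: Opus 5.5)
Your proposal is correct and takes essentially the same approach as the paper. The paper gives no separate proof; it says the proof of its earlier Lemma 3.1 carries over with $\Gal(L/K)$ in place of $\Gal(K/\Q)$. That argument is yours: $\Gal(L/K)$ acts on the cyclic group $E(L)[q]$ through $(\Z/q\Z)^\times$, each $\rho(\sigma)$ has order dividing $\gcd(\phi(q),\ord(\sigma))$, so the action is trivial or by $\pm1$, and in the latter case twisting by the quadratic field cut out by the kernel makes the points $K$-rational. Your two extra checks fill in details the paper leaves implicit. These are the continuity of $\rho$, which guarantees that $\ker\chi$ has a quadratic fixed field even when $L/K$ is infinite, and the explicit compatibility $\sigma\circ\psi=\chi(\sigma)\,\psi\circ\sigma$.
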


This lemma is necessary to absolve the result from depending on the degree of a 
Galois extension and instead state it in terms of the orders of its elements. It 
is also useful that we generalize our results to elliptic curves $E/K$ over an 
arbitrary number field $K$.

The lemma is particularly helpful when dealing with cyclotomic extensions of 
$\mathbb{Q}$: if $n$ is a positive integer not divisible by $16$, 
and every odd prime dividing $n$ is of the form $4k+3$, then the extension degree
$[\mathbb{Q}(\zeta_n):\mathbb{Q}]$ can be divisible by $4$, yet 
$\Gal(\mathbb{Q}(\zeta_n)/\mathbb{Q})$ contains no elements of order $4$. 
It is also very useful when considering the compositum of quadratic fields. 

In the next two results, we present direct applications of this lemma. It will 
also play a key role later in the proof of Theorem \ref{metabelian}, and in 
generalizing some of the results from \cite{Omer2}.

\begin{lemma}\label{13eliminationlem}
    Let $E/\Q$ be an elliptic curve, and let $L$ be a Galois number field such that 
    $\Gal(L/\Q)$ contains no element of order $3$ or $4$. 
    Then 
    \begin{equation*}
        E(L)_{\text{tors}} \not\cong \Z/13\Z.
    \end{equation*}    
\end{lemma}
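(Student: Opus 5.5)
We argue by contradiction: suppose $E(L)_{\text{tors}} \cong \Z/13\Z$, so that in particular $E(L)[13] \cong \Z/13\Z$. We then apply Lemma \ref{upgradedfourpointpairing} with $K = \Q$ and $q = 13$. Here $\phi(13) = 12 = 2^2\cdot 3$. Since $L$ is a Galois number field, every element of $\Gal(L/\Q)$ has finite order. By hypothesis, no $\sigma \in \Gal(L/\Q)$ has order $3$ or $4$. It follows that no element has order divisible by $3$ or by $4$, because otherwise a suitable power of $\sigma$ would have order exactly $3$ or exactly $4$. Hence $\gcd(12, \ord(\sigma)) \in \{1,2\}$ for every $\sigma$, and part (2) of Lemma \ref{upgradedfourpointpairing} applies. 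It gives
\begin{equation*}
    \Z/13\Z \cong E(L)[13] \cong E_d(\Q)[13]
\end{equation*}
for some $d \in \Q$ with $\sqrt{d} \in L$. Note that $E_d$ is again an elliptic curve over $\Q$ (when $d$ is a square, $E_d \cong E$ over $\Q$).

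The contradiction now comes from Mazur's theorem. No elliptic curve over $\Q$ has a rational point of order $13$, so $E_d(\Q)[13] = 0$, contradicting $E_d(\Q)[13] \cong \Z/13\Z$. Therefore $E(L)_{\text{tors}} \not\cong \Z/13\Z$.

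The only step that needs real care is the reduction from ``no element of order $3$ or $4$'' to ``$\gcd(\phi(13),\ord(\sigma)) \le 2$ for all $\sigma$''. This rests on the elementary fact that a finite cyclic group $\langle\sigma\rangle$ contains elements of every order dividing $\ord(\sigma)$. Once this is in place, the lemma together with Mazur's theorem closes the argument immediately. If one instead prefers to avoid the precise form of part (2), one could argue directly. The Galois action on the line $E(L)[13]$ gives a character $\Gal(L/\Q) \to (\Z/13\Z)^\times \cong \Z/12\Z$ whose image has exponent dividing $2$, so the character is quadratic. Twisting by the corresponding quadratic field $\Q(\sqrt{d}) \subseteq L$ makes the action trivial, which again produces a rational $13$-torsion point on $E_d$. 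This is the same proof in different language.
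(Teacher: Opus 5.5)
Your proposal is correct and follows the paper's route: apply part (2) of Lemma~\ref{upgradedfourpointpairing} with $K=\Q$ and $q=13$ to obtain a twist $E_d$ with a rational point of order $13$, which contradicts Mazur's theorem. Your justification that having no element of order $3$ or $4$ forces $\gcd(12,\ord(\sigma))\le 2$ (by passing to a suitable power of $\sigma$) is a step the paper leaves implicit.
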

\begin{proof}
    By Lemma \ref{upgradedfourpointpairing}, if $E(L)[13]\cong \Z/13\Z$, then there exists a square-free integer $d$ with $\sqrt{d}\in L$ such that 
    \begin{equation*}
        E_d(\Q)[13] \cong \Z/13\Z.
    \end{equation*}
    But $E_d$ is an elliptic curve over $\Q$, and by Mazur's theorem it cannot have a torsion point of order $13$. This gives a contradiction.
\end{proof}

\begin{corollary}\label{13elimination}
    Let $E/\Q$ be an elliptic curve, and let $n$ be a positive integer that is not divisible by $16$, and such that every odd prime dividing $n$ is of the form $12k+11$. Then
    \begin{equation*}
        E(\Q(\zeta_n))_{\text{tors}} \not\cong \Z/13\Z.
    \end{equation*}    
\end{corollary}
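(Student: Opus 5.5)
The plan is to deduce the corollary directly from Lemma \ref{13eliminationlem} with $L=\Q(\zeta_n)$. This field is Galois over $\Q$ with $\Gal(L/\Q)\cong(\Z/n\Z)^\times$, which is finite abelian, so every element has finite order. It therefore suffices to show that $(\Z/n\Z)^\times$ contains no element of order $3$ and no element of order $4$. Equivalently, the exponent of $(\Z/n\Z)^\times$ is not divisible by $3$ or by $4$.

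Write $n=2^a\prod_i p_i^{e_i}$ with each $p_i\equiv 11 \pmod{12}$ and $a\le 3$, the latter because $16\nmid n$. By the Chinese remainder theorem,
\begin{equation*}
(\Z/n\Z)^\times\cong(\Z/2^a\Z)^\times\times\prod_i(\Z/p_i^{e_i}\Z)^\times ,
\end{equation*}
and the exponent of the product is the lcm of the exponents of the factors. For $a\le 3$, the group $(\Z/2^a\Z)^\times$ is trivial, $\Z/2\Z$, or $\Z/2\Z\times\Z/2\Z$, so its exponent divides $2$. For an odd prime power, $(\Z/p_i^{e_i}\Z)^\times$ is cyclic of order $p_i^{e_i-1}(p_i-1)$.

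The prime $p_i\equiv 11\pmod{12}$ is congruent to $2$ mod $3$ and to $3$ mod $4$. Hence $3\nmid p_i-1$ and $4\nmid p_i-1$, so $p_i-1=2m$ with $m$ odd and $3\nmid m$. The factor $p_i^{e_i-1}$ contributes no factor of $2$, and no factor of $3$ since $p_i\ne 3$.

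Putting this together, each cyclic factor has order divisible by neither $3$ nor $4$. The lcm of all the exponents is therefore not divisible by $3$, and its $2$-adic valuation is at most $1$. So no element of $\Gal(\Q(\zeta_n)/\Q)$ has order $3$ or $4$, and Lemma \ref{13eliminationlem} gives $E(\Q(\zeta_n))_{\mathrm{tors}}\not\cong\Z/13\Z$.

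There is no real obstacle here; the only point needing care is the $2$-part. Excluding $16\mid n$ is exactly what keeps $(\Z/2^a\Z)^\times$ of exponent at most $2$, since $(\Z/16\Z)^\times$ already contains an element of order $4$.
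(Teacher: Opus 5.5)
Your proof is correct and matches the paper's argument. The paper also applies Lemma \ref{13eliminationlem} to $L=\Q(\zeta_n)$ after observing that $(\Z/n\Z)^\times$ has no element of order $3$ or $4$; you add the CRT and exponent verification that the paper leaves implicit.
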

\begin{proof}
    Since $\Gal(\Q(\zeta_n)/\Q)\cong (\Z/n\Z)^\times$, the assumptions imply that the Galois group contains no element of order $3$ or $4$. The result then follows from Lemma \ref{13eliminationlem}.
\end{proof}

\begin{lemma}\label{25eliminationlem}
    Let $E/\Q$ be an elliptic curve, and let $L$ be a Galois number field such that 
    $\Gal(L/\Q)$ contains no element of order $4$ or $5$. 
    Then 
    \begin{equation*}
        E(L)_{\text{tors}} \not\cong \Z/25\Z.
    \end{equation*}     
\end{lemma}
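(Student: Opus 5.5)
The plan is to follow the proof of Lemma \ref{13eliminationlem} verbatim, with $q=25$ in place of $13$. Suppose, for contradiction, that $E(L)_{\text{tors}} \cong \Z/25\Z$. Then $E(L)[25] \cong \Z/25\Z$, and $q=25$ is an odd prime power with $\phi(25)=20$. We apply Lemma \ref{upgradedfourpointpairing} with $K=\Q$ and the Galois extension $L/\Q$.

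The key step is to verify the hypothesis of part (2), namely $\gcd(20,\ord(\sigma)) \leq 2$ for every $\sigma \in \Gal(L/\Q)$. Since $L$ is a number field, every element has finite order. The positive divisors of $20$ are $1,2,4,5,10,20$. If $\gcd(20,\ord(\sigma))$ were $4$, $10$ or $20$, then $4$ or $5$ would divide $\ord(\sigma)$. A suitable power of $\sigma$ would then have order exactly $4$ or exactly $5$, which the hypothesis forbids. So $\gcd(20,\ord(\sigma)) \in \{1,2\}$, and part (2) applies.

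Part (2) gives some $d \in \Q$ with $\sqrt{d} \in L$ such that
\begin{equation*}
E_d(\Q)[25] \cong E(L)[25] \cong \Z/25\Z.
\end{equation*}
But $E_d$ is an elliptic curve over $\Q$, and by Mazur's theorem its rational torsion contains no point of order $25$, since the only cyclic groups that occur are $\Z/N\Z$ with $N \leq 10$ or $N=12$. This is a contradiction, so $E(L)_{\text{tors}} \not\cong \Z/25\Z$.

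There is no real obstacle here. The only point needing care is the elementary reduction from ``no element of order $4$ or $5$'' to the gcd condition, which is done by passing to powers of $\sigma$ as above. The degenerate case $d$ a square is harmless: then $E_d \cong E$ over $\Q$, and the same Mazur argument applies.
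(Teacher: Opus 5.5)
Your proposal is correct and follows the paper's proof: you apply part (2) of Lemma~\ref{upgradedfourpointpairing} with $K=\Q$ and $q=25$ to get a quadratic twist $E_d$ with a rational point of order $25$, which contradicts Mazur's theorem. Your explicit check that the absence of elements of order $4$ or $5$ forces $\gcd(20,\ord(\sigma))\le 2$ is a detail the paper leaves implicit.
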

\begin{proof}
    By Lemma \ref{upgradedfourpointpairing}, if $E(L)[25]\cong \Z/25\Z$, then there exists a square-free integer $d$ with $\sqrt{d}\in L$ such that
    \begin{equation*}
        E_d(\Q)[25] \cong \Z/25\Z.
    \end{equation*}
    But $E_d$ is an elliptic curve over $\Q$, and Mazur's theorem rules out torsion of order $25$. This is a contradiction.
\end{proof}

\begin{corollary}\label{25elimination}
    Let $E/\Q$ be an elliptic curve, and let $n$ be a positive integer that is not divisible by $16$, and such that every odd prime $p$ dividing $n$ satisfies $p \equiv 3 \pmod{4}$ and $p-1$ is not divisible by $5$. Then
    \begin{equation*}
        E(\Q(\zeta_n))_{\text{tors}} \not\cong \Z/25\Z.
    \end{equation*} 
\end{corollary}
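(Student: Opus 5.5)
The plan mirrors the proof of Corollary \ref{13elimination}: reduce to Lemma \ref{25eliminationlem} with $L=\Q(\zeta_n)$. This field is Galois over $\Q$ with $\Gal(\Q(\zeta_n)/\Q)\cong(\Z/n\Z)^\times$. So it suffices to show that $(\Z/n\Z)^\times$ contains no element of order $4$ and no element of order $5$; Lemma \ref{25eliminationlem} then gives $E(\Q(\zeta_n))_{\text{tors}}\not\cong\Z/25\Z$ directly.

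To analyze the group, write $n=2^a\prod_i p_i^{e_i}$. By the Chinese remainder theorem,
\begin{equation*}
(\Z/n\Z)^\times\cong(\Z/2^a\Z)^\times\times\prod_i(\Z/p_i^{e_i}\Z)^\times .
\end{equation*}
An element of order $4$ or $5$ exists in a finite abelian group exactly when some factor has an element of that order. For order $4$ this is clear, because $4$ is a prime power: if $g$ has order $4$, some component has order $4$. For order $5$, some component must have order divisible by $5$, and a suitable power of it then has order exactly $5$. So I will check the factors one at a time.

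For the odd primes, $(\Z/p_i^{e_i}\Z)^\times$ is cyclic of order $p_i^{e_i-1}(p_i-1)$. Since $p_i\equiv 3\pmod 4$, the $2$-part of $p_i-1$ is exactly $2$, so this factor has no element of order $4$. It has an element of order $5$ only if $5\mid p_i^{e_i-1}(p_i-1)$. By hypothesis $5\nmid p_i-1$, so this can only happen when $p_i=5$. But $5\equiv 1\pmod 4$ violates the hypothesis, so $p_i\neq 5$ and no element of order $5$ arises. For the prime $2$, $16\nmid n$ forces $a\le 3$. Then $(\Z/2^a\Z)^\times$ is trivial, $\Z/2\Z$, or $(\Z/2\Z)^2$, which has exponent at most $2$ and order prime to $5$.

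The only point needing care is the possibility $5\mid n$, where the factor $p^{e-1}$ could contribute an element of order $5$. It is excluded automatically because $5\not\equiv 3\pmod 4$. With both orders ruled out, Lemma \ref{25eliminationlem} applies to $L=\Q(\zeta_n)$ and gives the corollary.
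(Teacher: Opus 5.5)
Your proposal is correct and follows the paper's proof: you apply Lemma~\ref{25eliminationlem} to $L=\Q(\zeta_n)$ after checking that $(\Z/n\Z)^\times$ has no element of order $4$ or $5$. The only difference is that you spell out, via the Chinese remainder theorem, the group-theoretic check that the paper states in one line. That includes your observation that $5\nmid n$ is forced by $5\not\equiv 3 \pmod 4$.
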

\begin{proof}
    Since $\Gal(\Q(\zeta_n)/\Q)\cong (\Z/n\Z)^\times$, the assumptions guarantee that the Galois group contains no element of order $4$ or $5$. The result then follows from Lemma \ref{25eliminationlem}.
\end{proof}

We now present two important results that are used frequently in the classification of torsion subgroups.

\begin{theorem}[Fricke, Kenku, Klein, Kubert, Ligozat, Mazur, and Ogg, among others] \label{rationalisogeny}
    Let $E/\Q$ be an elliptic curve with an $n$-isogeny. Then 
    \begin{equation*}
        n \leq 19 \quad \text{or} \quad n \in \{21, 25, 27, 37, 43, 67, 163\}.
    \end{equation*}
    Moreover, if $E$ does not have complex multiplication, then 
    \begin{equation*}
        n \leq 18 \quad \text{or} \quad n \in \{21, 25, 37\}.
    \end{equation*}
\end{theorem}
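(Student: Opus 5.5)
The plan is to translate the statement into one about rational points on modular curves and then assemble the classical case analysis. An elliptic curve $E/\Q$ with a cyclic rational $n$-isogeny gives a non-cuspidal point of $X_0(n)(\Q)$, and conversely. Also, if $E$ has a cyclic $n$-isogeny, then it has one of every degree $m \mid n$. So it suffices to determine the set of $n$ for which $X_0(n)(\Q)$ has non-cuspidal points. This set is closed under taking divisors, so it is enough to control its \emph{minimal excluded} elements: primes, prime powers, and products of two primes.

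\textbf{Step 1 (prime degree).} I would invoke Mazur's theorem on rational isogenies of prime degree. His proof runs through the Eisenstein ideal and a formal immersion argument at an auxiliary prime, showing that $X_0(p)$ has no non-cuspidal rational points unless $p \le 19$ or $p \in \{37,43,67,163\}$. This is the deep input and the main obstacle; in a paper of this kind it can only be cited, not reproved.

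\textbf{Step 2 (prime powers and composite degrees).} Here I would use the works of Kenku, Ogg and Ligozat.
\begin{enumerate}[(a)]
\item For prime powers, Kenku excludes $32$, $49$, $125$ and $81$, and shows that $27$ and $25$ occur.
\item For composite degrees, I would combine the primes from Step 1 pairwise. The curves $X_0(n)$ with $n \in \{20,24,26,28,30,33,34,35,38,39,\dots\}$ have genus $\ge 1$. Their rational points are determined either because they are elliptic curves of rank $0$ over $\Q$, with only cuspidal torsion, or via Kenku's treatment of $X_0(39)$, $X_0(65)$, $X_0(91)$, $X_0(125)$ and $X_0(169)$. The survivors are exactly $n \le 19$ and $21$.
\item Since the surviving set is divisor-closed, this yields the list $n \le 19$ or $n \in \{21,25,27,37,43,67,163\}$.
\end{enumerate}

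\textbf{Step 3 (non-CM refinement).} The genus of $X_0(n)$ is $0$ for $n \in \{1,\dots,10,12,13,16,18,25\}$. For these $n$ one gets infinite families of $j$-invariants, which certainly include non-CM curves. For $n \in \{11,14,15,17,21,37\}$, the finitely many rational points can be listed explicitly, and each list contains non-CM $j$-invariants (for example $j=-11\cdot 131^3$ for $n=11$). For $n \in \{19,27,43,67,163\}$, the only non-cuspidal rational points have CM $j$-invariants. For instance, for $n=19$ the only one is $j=-2^{15}3^3$, corresponding to CM by $\Z[(1+\sqrt{-19})/2]$; the other cases are analogous. Removing these five values gives $n \le 18$ or $n \in \{21,25,37\}$ in the non-CM case.

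The only genuinely hard step is Step 1. I expect Step 2 to require a finite but tedious case check, which I would take from the literature (Kenku's series of papers) rather than redo.
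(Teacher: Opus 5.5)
The paper gives no proof of this statement; it quotes it as the classical theorem of Mazur, Kenku, Ligozat, Ogg and others. Your overall route is the standard proof behind that citation: non-cuspidal points of $X_0(n)(\Q)$, divisor-closure, Mazur for prime degree, the literature for composite levels, and a CM check on $X_0(n)$ for $n\in\{19,27,43,67,163\}$. Your non-CM deduction is also correct.

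However, the mechanism you describe in Step 2(b) is wrong, and it leaves out the reduction that actually makes the composite case work. Of the levels you list, $X_0(26)$, $X_0(28)$, $X_0(30)$, $X_0(33)$, $X_0(34)$, $X_0(35)$ and $X_0(38)$ have genus $2,2,3,3,3,3,4$ respectively. They are not elliptic curves and not among Kenku's five, so your dichotomy covers only $20$, $24$ and $39$. Similarly, the minimal excluded levels are not just primes, prime powers and products of two primes. For instance, $30=2\cdot 3\cdot 5$ and $42=2\cdot 3\cdot 7$ are minimal excluded levels with three prime factors.

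The missing step is the following. Suppose $n$ is divisible by some $m\in\{11,14,15,17,19,21,27,37,43,67,163\}$. Then $X_0(m)(\Q)$ is finite and explicitly known, so $j(E)$ lies in a short list. That list contains neither $0$ nor $1728$, so quadratic twisting does not change the isogenies. One then checks from these finitely many isogeny classes that no cyclic $n$-isogeny exists. This disposes of $22,28,30,33,34,38,42,45,54,63,81,121,\dots$ without studying $X_0(n)$ at all.

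What remains are the levels all of whose proper divisors are genus-zero levels, namely $20,24,26,32,35,36,39,49,50,65,91,125,169$. These are exactly the cases where the cited work is needed: Ligozat and Ogg for the low-genus ones, and Kenku for $39,65,91,125,169$.

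Finally, your claim in Step 3 about $n=14$ is false. The two non-cuspidal rational points of $X_0(14)$ have $j=-3^3\cdot 5^3$ and $j=3^3\cdot 5^3\cdot 17^3$. Both are CM, with discriminants $-7$ and $-28$, so no non-CM $E/\Q$ has a cyclic $14$-isogeny. This does not damage the argument, since the theorem only bounds $n$ and needs nothing beyond your CM check for $19,27,43,67,163$.
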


\begin{lemma}[Chou, Lemma 2.7, \cite{Chou}]\label{n-isogeny}
    Let $L$ be a Galois extension of $\Q$, and let $E$ be an elliptic curve over $\Q$. If 
    \begin{equation*}
        E(L)_{\text{tors}} \cong \Z/m\Z \times \Z/mn\Z,
    \end{equation*}
    then $E$ admits an $n$-isogeny over $\Q$.
\end{lemma}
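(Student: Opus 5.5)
The statement is Chou's Lemma 2.7: if $L/\Q$ is Galois and $E(L)_{\text{tors}} \cong \Z/m\Z \times \Z/mn\Z$, then $E$ has a rational $n$-isogeny. The plan is to produce a cyclic subgroup of order $n$ in $E(\overline{\Q})$ that is stable under $G_\Q = \Gal(\overline{\Q}/\Q)$, and then invoke the standard correspondence between $G_\Q$-stable finite subgroups and isogenies defined over $\Q$.

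\textbf{Step 1: the torsion group is Galois-stable.} Let $T = E(L)_{\text{tors}}$. Because $E$ is defined over $\Q$ and $L/\Q$ is Galois, every $\sigma \in G_\Q$ maps $L$ to itself. Since $\sigma$ commutes with the group law of $E$, it maps $E(L)$ to $E(L)$ and torsion points to torsion points. Hence $G_\Q$ acts on $T$ by group automorphisms.

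\textbf{Step 2: a characteristic cyclic subgroup of order $n$.} Let $mT = \{mP : P \in T\}$. Any subgroup defined intrinsically from the group structure is preserved by every group automorphism, and $mT$ is such a subgroup; so $mT$ is $G_\Q$-stable. Writing $T \cong \Z/m\Z \times \Z/mn\Z$, we compute
\[
mT \cong m(\Z/m\Z) \times m(\Z/mn\Z) = 0 \times m\Z/mn\Z \cong \Z/n\Z.
\]
So $C := mT$ is a cyclic subgroup of $E(\overline{\Q})$ of order $n$, stable under $G_\Q$.

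\textbf{Step 3: from a stable subgroup to an isogeny.} A finite subgroup $C \subset E(\overline{\Q})$ that is stable under $G_\Q$ is the kernel of an isogeny $E \to E/C$ defined over $\Q$, for instance via Vélu's formulas, whose coefficients are symmetric functions of the points of $C$ and hence rational. Since $C$ is cyclic of order $n$, this is an $n$-isogeny in the usual sense, namely one with cyclic kernel of order $n$.

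\textbf{Main point of care.} The only subtle step is making sure the subgroup chosen is preserved by \emph{all} automorphisms of $T$, not merely abstractly isomorphic to $\Z/n\Z$. For example, the full $n$-torsion $T[n]$ need not be cyclic, so it would not work. Using the image $mT$ of multiplication by $m$ resolves this, since multiplication by $m$ commutes with every automorphism. No restriction on $n$ is needed; the case $n = 1$ is trivial.
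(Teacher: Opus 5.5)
Your proof is correct: the $G_{\Q}$-stability of $E(L)_{\mathrm{tors}}$ (because $L/\Q$ is Galois), passing to the characteristic cyclic subgroup $mE(L)_{\mathrm{tors}} \cong \Z/n\Z$, and then taking the quotient isogeny $E \to E/C$ over $\Q$ is the standard argument for Chou's Lemma 2.7. The paper does not prove the lemma itself and only cites Chou, so there is no separate route to compare against.
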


Next, we present a theorem that allows us to eliminate certain torsion subgroups 
when working with cyclotomic extensions $\Q(\zeta_n)$. We treated 
the case when $n$ is prime \cite{Omer}. Building upon that, we can now obtain results for 
composite $n$. This is an important step for the classification results presented 
immediately afterward.

Our result can also be generalized to an arbitrary abelian number field $L$ by 
considering the minimal positive integer $n$ such that $L \subset \Q(\zeta_n)$. 
The group structures we eliminate are precisely those that can appear as torsion 
subgroups over arbitrary abelian number fields. The complete list is given in
\cite[Theorem 2.3]{Omer}, which follows directly from \cite[Theorem 1.2]{Chou}.

\begin{theorem}\label{classificationtheoremmain}
    Let $E/\mathbb{Q}$ be an elliptic curve, and let $n$ be a positive integer. Then:
    \begin{enumerate}[(i)]
        \item If $n$ is not divisible by $3$, then 
        \begin{equation*}
            E(\Q(\zeta_n))_{\text{tors}} \not\cong \mathbb{Z}/N\mathbb{Z} \text{ for } N=21,27,
        \end{equation*}
        and 
        \begin{equation*}
            E(\Q(\zeta_n))_{\text{tors}} \not\cong \mathbb{Z}/3\mathbb{Z} \times \mathbb{Z}/3N\mathbb{Z} \text{ for } N=1,2,3,
        \end{equation*}
        and 
        \begin{equation*}
            E(\Q(\zeta_n))_{\text{tors}} \not\cong \mathbb{Z}/6\mathbb{Z} \times \mathbb{Z}/6\mathbb{Z}.
        \end{equation*}
        
        \item If $n$ is not divisible by $4$, then
        \begin{equation*}
            E(\Q(\zeta_n))_{\text{tors}} \not\cong \mathbb{Z}/4\mathbb{Z} \times \mathbb{Z}/4N\mathbb{Z} \text{ for } N=1,2,3,4,
        \end{equation*}
        and 
        \begin{equation*}
            E(\Q(\zeta_n))_{\text{tors}} \not\cong \mathbb{Z}/8\mathbb{Z} \times \mathbb{Z}/8\mathbb{Z}.
        \end{equation*}
        
        \item If $n$ is not divisible by $5$, then
        \begin{equation*}
            E(\Q(\zeta_n))_{\text{tors}} \not\cong \mathbb{Z}/N\mathbb{Z} \text{ for } N=15,17,37,
        \end{equation*}
        and 
        \begin{equation*}
            E(\Q(\zeta_n))_{\text{tors}} \not\cong \mathbb{Z}/5\mathbb{Z} \times \mathbb{Z}/5\mathbb{Z}.
        \end{equation*}
        
        \item If $n$ is not divisible by $3$ and $5$, then 
        \begin{equation*}
            E(\Q(\zeta_n))_{\text{tors}} \not\cong \mathbb{Z}/15\mathbb{Z}.
        \end{equation*}
        
        \item If $n$ is not divisible by $7$, then 
        \begin{equation*}
            E(\Q(\zeta_n))_{\text{tors}} \not\cong \mathbb{Z}/N\mathbb{Z} \text{ for } N=14,37.
        \end{equation*}
        
        \item If $n$ is not divisible by $7$, and $\phi(n)$ is not divisible by $3$, then 
        \begin{equation*}
            E(\Q(\zeta_n))_{\text{tors}} \not\cong \mathbb{Z}/2\mathbb{Z} \times \mathbb{Z}/14\mathbb{Z}.
        \end{equation*}
        
        \item If $n$ is not divisible by the prime $\ell \in \{11,17,19,43,67,163\}$, then 
        \begin{equation*}
            E(\Q(\zeta_n))_{\text{tors}} \not\cong \mathbb{Z}/\ell\mathbb{Z}.
        \end{equation*}
    \end{enumerate}
\end{theorem}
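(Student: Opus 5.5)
The plan is to treat the statements in two groups: the non-cyclic groups, which should follow from the Weil pairing, and the cyclic groups, which need isogeny information together with ramification of the field generated by the torsion point. Throughout, $L=\Q(\zeta_n)$ is abelian over $\Q$ and ramified exactly at the primes dividing $n$, with the prime $2$ ramified only when $4\mid n$.

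\textbf{Non-cyclic groups.} If $E(L)\supseteq E[m]$, the Weil pairing forces $\zeta_m\in L$. In (i), the groups $\Z/3\Z\times\Z/3N\Z$ and $\Z/6\Z\times\Z/6\Z$ give $\zeta_3\in\Q(\zeta_n)$. In (ii), the groups $\Z/4\Z\times\Z/4N\Z$ and $\Z/8\Z\times\Z/8\Z$ give $i\in\Q(\zeta_n)$. In (iii), $\Z/5\Z\times\Z/5\Z$ gives $\zeta_5\in\Q(\zeta_n)$. Since $\Q(\zeta_m)\subseteq\Q(\zeta_n)$ holds if and only if $m\mid n$ (for $m\in\{3,4,5\}$), these cases are immediate. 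For (vi), $\Z/2\Z\times\Z/14\Z$ gives a $7$-isogeny by Lemma \ref{n-isogeny}. I would then argue that a $7$-torsion point over an abelian field unramified at $7$ is impossible, as in the cyclic cases below, except when its field of definition has degree divisible by $3$. Such a field of degree divisible by $3$ inside $\Q(\zeta_n)$ requires $3\mid\phi(n)$, which is excluded.

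\textbf{Cyclic groups.} For $\Z/N\Z$ with $N\in\{14,15,17,21,27,37,11,19,43,67,163\}$, Lemma \ref{n-isogeny} with $m=1$ gives a rational cyclic $N$-isogeny. Theorem \ref{rationalisogeny} then shows that for $N\in\{15,17,21,27,37,43,67,163\}$, and for $11$ and $19$, the curve $E$ lies in one of finitely many explicitly known $\overline{\Q}$-isomorphism classes of $j$-invariants. Twisting only changes the point's field by a quadratic character, which Lemma \ref{upgradedfourpointpairing} lets me control. So for each such $j$ I would compute the field $\Q(P)$ cut out by the isogeny character for the relevant prime-power torsion point $P$. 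The aim is to show that $\Q(P)$ contains a subfield ramified at the excluded prime, or, in (i), a cyclic cubic subfield ramified at $3$, namely $\Q(\zeta_9)^+$ for the $21$- and $27$-cases. That forces the corresponding prime to divide $n$.

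A uniform argument for the prime-order cases is also available, and I would try it first. Suppose $P$ has order $\ell\ge 5$ and is defined over an abelian field unramified at $\ell$. Then the isogeny character $\psi$ is unramified at $\ell$, while its companion $\psi'$ satisfies $\psi\psi'=\chi_\ell$ (the mod-$\ell$ cyclotomic character). Using the semistable or potentially good description of inertia at $\ell$ (Serre, Raynaud), $\psi'$ must then be $\chi_\ell$ times a character of small order. This contradicts $E$ having a rational point-like structure that Mazur's results forbid. Concretely, after a quadratic twist allowed by Lemma \ref{upgradedfourpointpairing}, I expect to produce a rational $\ell$-torsion point on a twist, impossible for $\ell\ge 11$. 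For $\Z/15\Z$ in (iii) the key prime is $5$: the $5$-part of $P$ must generate a field ramified at $5$. For (iv), the $3$-part gives ramification at $3$ or the $5$-part gives ramification at $5$, so excluding both primes kills the case. For $\Z/14\Z$ and $\Z/37\Z$ in (v), the same computation with $\ell=7$ applies; for $37$ one uses the two explicit $j$-invariants with a $37$-isogeny.

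\textbf{Main obstacle.} I expect the main obstacle to be these cyclic cases. There one must really determine the ramification of the isogeny character for each exceptional $j$-invariant, and check that no quadratic twist removes the ramification at the relevant prime. If the uniform inertia argument fails in some case, I would fall back on Chou's explicit description of $E(\Q^{\mathrm{ab}})_{\mathrm{tors}}$ for those finitely many curves \cite{Chou}, reading off the conductor of the field of definition.
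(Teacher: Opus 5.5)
\textbf{Where you match the paper.} Your Weil-pairing argument for the non-cyclic groups in (i)--(iii) is the paper's. Your fallback for the cyclic groups is also what the paper does: use Lemma \ref{n-isogeny} to reduce to the finitely many $j$-invariants with a rational $N$-isogeny, then read off the ramification of the kernel field (the paper cites its earlier table). Working with $\Q(x(P))$ rather than $\Q(P)$ removes your ``main obstacle'': $\Q(x(P))$ is contained in $\Q(P)$ and is unchanged by quadratic twists.

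\textbf{The gap in (vi).} You claim that a $7$-torsion point over an abelian field unramified at $7$ is impossible unless its field of definition has degree divisible by $3$. This is false: rational points of order $7$ exist, since $\Z/7\Z$ is in Mazur's list. Your argument uses the $\Z/2\Z$ factor only to extract a $7$-isogeny. Curves with a rational $7$-isogeny form an infinite family, so no finite check is available, and the argument would equally ``prove'' that no $E/\Q$ has a $7$-torsion point over $\Q(\zeta_n)$, even for $n=1$.

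The missing idea is to apply $3\nmid\phi(n)$ to the $2$-torsion. Then $\Q(\zeta_n)$ has no cubic subfield, so full $2$-torsion over $\Q(\zeta_n)$ forces the $2$-division cubic to have a rational root, i.e.\ a rational $2$-torsion point. The paper combines this with the $7$-isogeny to get a rational $14$-isogeny. That leaves only two $j$-invariants (CM by orders in $\Q(\sqrt{-7})$), whose $7$-isogeny kernel requires $7\mid n$.

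An alternative finish: no element of $\Gal(\Q(\zeta_n)/\Q)$ has order divisible by $3$. So Lemma \ref{upgradedfourpointpairing}(2) with $q=7$ gives a twist $E_d$ with $E_d(\Q)[7]\cong\Z/7\Z$. $E_d$ keeps the rational $2$-torsion point, so $\Z/14\Z\subseteq E_d(\Q)$, contradicting Mazur.

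\textbf{The ``uniform'' inertia argument.} The same error undermines the argument you say you would try first.
\begin{itemize}
\item For $\ell=5,7$ it is refuted by rational $5$- and $7$-torsion. So your claims that the $5$-part of a $15$-torsion point must generate a field ramified at $5$, and that $\Z/14\Z$ follows from ``the same computation with $\ell=7$'', hold only because of the rational $15$- and $14$-isogenies and the resulting finite lists of $j$-invariants. Note that $14$ is missing from your finiteness list.
\item For $\ell\ge 11$ it does not close either. Unramifiedness at $\ell$ does not make the isogeny character quadratic, so you cannot produce a twist with a rational $\ell$-torsion point. Lemma \ref{upgradedfourpointpairing}(2) is also unavailable, because $\Gal(\Q(\zeta_n)/\Q)$ can contain elements whose orders share an odd factor with $\ell-1$ (e.g.\ $\ell=11$, $n=31$).
\end{itemize}
So the explicit check for each exceptional $j$-invariant is not a fallback; it is the actual proof of the cyclic cases.
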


\begin{proof}
    Groups of the form $\Z/m\Z \times \Z/mN\Z$ can be eliminated using the Weil 
    pairing: if an elliptic curve defined over a number field $K$ has full 
    $m$-torsion over $K$, then $K$ must contain all $m$-th roots of unity. 
    This argument covers all cases where $m \ge 3$ in items (i)--(iii).

    Groups of the form $\Z/N\Z$ can be eliminated as follows. Let $E/\Q$ be an 
    elliptic curve and $K$ a Galois number field. If 
    \begin{equation*}
        E(K)_{\text{tors}} \cong \Z/m\Z \times \Z/mN\Z,
    \end{equation*} 
    then $E$ has a rational $N$-isogeny by \cite[Lemma 2.7]{Chou}. In the 
    statement of this theorem, the listed $N$ are special because there are only
    finitely many elliptic curves (finitely many rational $j$-invariants, up to 
    quadratic twist) with rational $N$-isogeny for each $N$. Moreover, the field 
    of definition of the $N$-isogeny is classified
    \cite[Table 2]{Omer}. In particular, the $x$-coordinates of the kernel points of the 
    $N$-isogeny are defined over these fields, and this does not change for 
    quadratic twists. Therefore, this classification puts a hard bound on which 
    cyclotomic fields $\Q(\zeta_n)$ can contain a torsion point of order $N$.

    These two arguments cover all group structures in the theorem except $\Z/2\Z \times \Z/14\Z$. For this case, the extra condition that $\phi(n)$ is not 
    divisible by $3$ implies that the extension degree $[\Q(\zeta_n):\Q]$ is not 
    divisible by $3$, so $\Q(\zeta_n)$ does not contain a cubic subfield. Since 
    $E$ has full $2$-torsion over $\Q(\zeta_n)$, the field of definition of the 
    $2$-torsion cannot have degree $3$ or $6$, so at least one $2$-torsion point 
    is rational. By \cite[Lemma 2.7]{Chou}, this implies that $E$ has a rational
    $7$-isogeny. Combining this with the rational $2$-torsion point, $E$ would 
    have a rational $14$-isogeny, which in turn forces $n$ to be divisible by $7$, as analyzed in 
    \cite{Omer}. This contradicts the assumption.
\end{proof}

\section{More Classification Results} \label{torsionsection}

In this chapter, we study torsion subgroups of elliptic curves over certain 
metabelian extensions, such as Kummer extensions of cyclotomic fields, building on 
and refining the classification results \cite{Omer}. We then generalize some 
results on $\Z_p$-extensions of number fields \cite{Omer2}, showing that 
torsion over the full $\Z_p$-extension is often already defined over the base 
field. 

In this context, our previous results on the impossibility of realizing the 
torsion subgroups $\Z/2\Z \times \Z/10\Z$ and $\Z/2\Z \times \Z/12\Z$ over 
quadratic fields imply that these groups also cannot appear over the corresponding 
metabelian or $\Z_p$-extensions. This connects the explicit elimination results 
over quadratic fields to broader infinite abelian extensions and illustrates the 
reach of our methods.

\begin{theorem}\label{4exceptions}
Let $E/\Q$ be an elliptic curve. Define the field
\begin{equation*}
L = \Q\Big(\{\mu_{p^\infty}: p>11 \text{ is a prime satisfying } p\equiv 11\Mod{12} \text{ and } p \not\equiv 1 \Mod{5} \}\Big). 
\end{equation*}
Then $E(L)_{\text{tors}}$ is isomorphic either to one of the groups from Mazur's theorem or to one of the following:
\begin{equation*}
\Z/2\Z \times \Z/10\Z, \quad \Z/2\Z \times \Z/12\Z, \quad \Z/16\Z, \quad \text{or} \quad \Z/2\Z \times \Z/16\Z.
\end{equation*}
\end{theorem}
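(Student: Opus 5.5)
Since $L$ is an abelian extension of $\Q$, the plan is to begin from the classification of $E(\Q^{\mathrm{ab}})_{\text{tors}}$ for $E/\Q$, namely the list in \cite[Theorem 2.3]{Omer}, which follows from \cite[Theorem 1.2]{Chou}. Then we eliminate every group on that list except the Mazur groups and the four groups named in the statement.

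\emph{Reduction to finite level.} $E(L)_{\text{tors}}$ is finite, so it is generated by finitely many points. Hence
\begin{equation*}
E(L)_{\text{tors}} = E(\Q(\zeta_n))_{\text{tors}}
\end{equation*}
for some $n$ that is a product of prime powers $p^k$ with $p>11$, $p\equiv 11 \pmod{12}$ and $p\not\equiv 1\pmod 5$. Enlarging $n$ only enlarges the torsion inside $L$, so we may fix $n$ large enough that this equality holds. Note the following properties of such $n$.
\begin{enumerate}[(a)]
\item $n$ is odd, so $16\nmid n$.
\item $\gcd(n,2\cdot 3\cdot 5\cdot 7\cdot 11)=1$.
\item None of $17,19,37,43,67,163$ divides $n$: we have $17\equiv 5$, $37\equiv 1$, and $19,43,67,163\equiv 7 \pmod{12}$.
\item Every odd prime $p\mid n$ satisfies $p\equiv 3\pmod 4$, $p\equiv 2\pmod 3$ and $p\not\equiv 1\pmod 5$.
\end{enumerate}
Consequently $\phi(n)=\prod p^{k-1}(p-1)$ is not divisible by $3$. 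Moreover $(\Z/n\Z)^\times\cong\prod \Z/p^{k-1}\Z\times\Z/(p-1)\Z$ has no elements of order $3$, $4$ or $5$, because $p-1\equiv 10\pmod{12}$ is twice an odd number prime to $3$ and $5$, and $p^{k-1}$ is prime to $30$.

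\emph{Elimination.} We run through the extra groups of \cite[Theorem 2.3]{Omer} one at a time.
\begin{enumerate}[(1)]
\item The groups $\Z/m\Z\times\Z/mN\Z$ with $m\in\{3,4,5,6,8\}$ are excluded by parts (i)--(iii) of Theorem \ref{classificationtheoremmain}, since $3,4,5\nmid n$. This is the Weil pairing argument: $\zeta_3,i,\zeta_5\notin \Q(\zeta_n)$.
\item The cyclic groups $\Z/N\Z$ with $N\in\{11,14,15,17,19,21,27,37,43,67,163\}$ are excluded by parts (i), (iii), (iv), (v) and (vii) of the same theorem, using property (c) and $\gcd(n,3\cdot5\cdot7\cdot11)=1$.
\item $\Z/2\Z\times\Z/14\Z$ is excluded by part (vi), since $7\nmid n$ and $3\nmid\phi(n)$.
\item $\Z/13\Z$ is excluded by Corollary \ref{13elimination}, since every odd prime dividing $n$ is $\equiv 11\pmod{12}$ and $16\nmid n$. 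Equivalently, apply Lemma \ref{13eliminationlem} directly to $\Gal(\Q(\zeta_n)/\Q)$.
\item $\Z/25\Z$ is excluded by Corollary \ref{25elimination} or Lemma \ref{25eliminationlem}, since there are no elements of order $4$ or $5$.
\end{enumerate}
What survives is exactly the Mazur groups together with $\Z/2\Z\times\Z/10\Z$, $\Z/2\Z\times\Z/12\Z$, $\Z/16\Z$ and $\Z/2\Z\times\Z/16\Z$. The $2$-power group $\Z/2\Z\times\Z/16\Z$ needs no new elimination, because the theorem allows it.

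\emph{Main obstacle.} I expect the delicate part to be bookkeeping against Chou's list rather than any new argument. I would check that each remaining group on the list is actually covered by one of the cited items, in particular any group like $\Z/2\Z\times\Z/18\Z$ if it appears there. Such a group would first be attacked via Lemma \ref{n-isogeny} plus the isogeny field-of-definition table, or via Lemma \ref{upgradedfourpointpairing}(2) applied to its $9$-part, whose $\phi(9)=6$ meets element orders only in $2$. I would also verify that Theorem \ref{classificationtheoremmain} is legitimately applied at the finite level $\Q(\zeta_n)$ for composite odd $n$, which is exactly the setting in which it is stated.
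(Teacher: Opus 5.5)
Your reduction to $\Q(\zeta_n)$ and eliminations (1)--(5) match the paper's proof. The gap is your claim that after these steps ``what survives is exactly'' the Mazur groups plus the four listed groups: this is false. $\Z/18\Z$ and $\Z/2\Z\times\Z/18\Z$ are on the list of \cite[Theorem 2.3]{Omer}, and the paper's own list of survivors at this stage contains both. None of your steps removes them.

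Neither remedy sketched in your last paragraph closes this gap.
\begin{enumerate}[(i)]
\item Lemma \ref{n-isogeny} only produces a rational $18$- or $9$-isogeny. These occur for infinitely many $j$-invariants, so there is no finite field-of-definition table to check.
\item Lemma \ref{upgradedfourpointpairing}(2) with $q=9$ does apply, since no element of $\Gal(\Q(\zeta_n)/\Q)$ has order divisible by $3$. But it only gives $E_d(\Q)[9]\cong\Z/9\Z$ for some twist $E_d$ with $\sqrt{d}\in\Q(\zeta_n)$, and Mazur's theorem permits that.
\end{enumerate}

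The missing ingredient is the $2$-torsion. Since $3\nmid\phi(n)$, the field $\Q(\zeta_n)$ has no cubic subfield. So if $E$ has a point of order $2$ over $\Q(\zeta_n)$, the $2$-division cubic has a root of degree $1$ or $2$. Hence the cubic is reducible over $\Q$ and has a rational root. A quadratic twist only rescales the $x$-coordinates of the $2$-torsion points, so $E_d$ also has a rational point of order $2$. Together with $E_d(\Q)[9]\cong\Z/9\Z$, this gives a point of order $18$ in $E_d(\Q)$, contradicting Mazur's theorem. This argument disposes of both $\Z/18\Z$ and $\Z/2\Z\times\Z/18\Z$. It is the step the paper performs by citing \cite[Lemma 3.2]{Omer} together with the absence of cubic subfields in $L$.
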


\begin{proof}
It suffices to prove the statement for all cyclotomic fields $\Q(\zeta_n)\subset L$, since $E(L)_{\text{tors}}$ is finite \cite[Theorem 1.2]{Chou}, and the field of definition of this torsion subgroup is an abelian number field, so it must be contained in some $\Q(\zeta_n) \subset L$.

Let $n$ be a positive integer such that $\Q(\zeta_n) \subset L$. Then $E(\Q(\zeta_n))_{\text{tors}}$ is contained in the finite list of possible torsion groups given \cite[Theorem 2.3]{Omer}. Using Lemmas \ref{13elimination}, \ref{25elimination}, and Theorem \ref{classificationtheoremmain}, we can eliminate all groups except the following:
\begin{equation*}
E(\Q(\zeta_n))_{\text{tors}} \simeq
\begin{cases}
\Z / N\Z & 1\leq N \leq 10 \text{ or } N = 12,16,18,\\
\Z/2\Z \times \Z/2N\Z & 1\leq N \leq 6 \text{ or } N = 8,9.
\end{cases}
\end{equation*}

To complete the proof, it remains to eliminate $\Z/18\Z$ and $\Z/2\Z \times \Z/2N\Z$ for $N=7,9$. This can be done using \cite[Lemma 3.2]{Omer}, since $L$ has no cubic subfield and therefore has no subfield with degree divisible by $3$.
\end{proof}

\begin{remark}
We know that $\Z/2\Z\times \Z/10\Z$ is realized, as an example of an elliptic curve with this torsion subgroup over $\Q(\sqrt{-59})$ is given \cite[Section 6]{Omer}.  

From our \texttt{SageMath} calculations, we see that $\rk(E_{-23}(\Q)) = 1$. Then, by Proposition \ref{infinitely12}, we conclude that there exist infinitely many elliptic curves $E/\Q$ with torsion subgroup $\Z/2\Z \times \Z/12\Z$ over $\Q(\sqrt{-23})$.  

Both of these quadratic fields are contained in $L$, and we have shown that the torsion subgroup cannot be larger.  

We do not have explicit examples for the groups $\Z/16\Z$ or $\Z/2\Z \times \Z/16\Z$, but they cannot be eliminated and are likely to appear. Examples of $\Z/16\Z$ being realized over $\Q(\sqrt{d})$ are analyzed \cite{bander}, and an explicit example is given \cite{Omer}: the elliptic curve \lmfdbecx{266910}{ck}{6} satisfies
\begin{equation*}
E(\Q(\sqrt{41}))_{\text{tors}} \cong \Z/16\Z.
\end{equation*}

Similarly, the group $\Z/2\Z \times \Z/16\Z$ is known to be realized over $\Q(\sqrt{d_1}, \sqrt{d_2})$. An example is given \cite{Fujita}: the elliptic curve \lmfdbecx{210}{e}{7} satisfies
\begin{equation*}
E(\Q(\sqrt{-7}, \sqrt{-15}))_{\text{tors}} \cong \Z/2\Z \times \Z/16\Z.
\end{equation*}

Determining whether the specific numbers $d, d_1, d_2$ can be chosen to satisfy the conditions in the statement of this theorem is beyond the scope of this work.
\end{remark}

\begin{theorem}
Let $E/\Q$ be an elliptic curve.  
Define the field $L$ as
\begin{equation*}
    L = \Q\Big(\{\mu_{p^\infty} : p>11 \text{ is a prime satisfying } p \equiv 11 \Mod{12} \text{ and } p \not\equiv \pm 1 \Mod{5} \}\Big).
\end{equation*}
Then $ E(L)_{\text{tors}}$
is isomorphic to one of the groups from Mazur's theorem or one of the following groups:
\begin{equation*}
    \Z/2\Z \times \Z/12\Z, \quad \Z/16\Z, \quad \text{and} \quad \Z/2\Z \times \Z/16\Z.
\end{equation*}
\end{theorem}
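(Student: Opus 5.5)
The plan is to reuse the proof of Theorem \ref{4exceptions} essentially verbatim, and then add one extra step to eliminate $\Z/2\Z\times\Z/10\Z$.

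The new field $L$ is a subfield of the field in Theorem \ref{4exceptions}, because the condition $p\not\equiv \pm1 \pmod 5$ is stronger than $p\not\equiv 1\pmod 5$. So $E(L)_{\text{tors}}$ is automatically one of Mazur's groups or one of $\Z/2\Z\times\Z/10\Z$, $\Z/2\Z\times\Z/12\Z$, $\Z/16\Z$, $\Z/2\Z\times\Z/16\Z$. As in that proof, finiteness of $E(L)_{\text{tors}}$ lets us reduce to a cyclotomic field $\Q(\zeta_n)\subset L$ containing the field of definition of the torsion. It therefore remains only to show
\begin{equation*}
E(\Q(\zeta_n))_{\text{tors}}\not\cong \Z/2\Z\times\Z/10\Z
\end{equation*}
for every such $n$.

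\textbf{Step 1: reduction to a quadratic field.} Suppose the torsion over $\Q(\zeta_n)$ is $\Z/2\Z\times\Z/10\Z$. Every $p\mid n$ satisfies $p\equiv 3\pmod 4$, and $p-1$ is divisible by neither $4$ nor $5$. Hence $\Gal(\Q(\zeta_n)/\Q)$ has no elements of order $4$ or $5$; one should also check that $n$ is not divisible by $16$, or handle the $2$-power part directly.

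We apply Lemma \ref{upgradedfourpointpairing}(2) with $q=5$. This gives
\begin{equation*}
E(\Q(\zeta_n))[5]\cong E_d(\Q)[5]
\end{equation*}
for some $d$ with $\sqrt d\in\Q(\zeta_n)$. So some twist $E_d$ has a rational $5$-torsion point, and therefore $E$ has a point of order $5$ over $\Q(\sqrt d)$.

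\textbf{Step 2: the $2$-torsion.} Full $2$-torsion over $\Q(\zeta_n)$ together with the absence of cubic subfields (since $3\nmid p-1$, as $p\equiv 2 \pmod 3$) gives a rational $2$-torsion point. The other two points are then defined over some quadratic field $\Q(\sqrt{d'})\subset\Q(\zeta_n)$.

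The goal is to show that the whole group $\Z/2\Z\times\Z/10\Z$ is defined over a single quadratic subfield $\Q(\sqrt{D})\subset\Q(\zeta_n)$. Then $E(\Q(\sqrt{D}))_{\text{tors}}\supseteq\Z/2\Z\times\Z/10\Z$. By Najman's classification over quadratic fields this is an equality, since nothing larger containing it occurs for $E/\Q$ over quadratic fields. I expect this step to be the main obstacle: the fields of definition of the $5$-part and of the $2$-part must be matched up. I would try to show that they coincide, or that otherwise the torsion over the biquadratic compositum contradicts Chou/Najman's lists for $E/\Q$ over $\Q(\sqrt{d},\sqrt{d'})$. An alternative is a direct Galois-module argument: $\Q(\zeta_n)$ has exponent-$2$ quotient acting on the point of order $10$ through a single quadratic character.

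\textbf{Step 3: excluding the quadratic field.} The quadratic subfields of $\Q(\zeta_n)$ are $\Q(\sqrt{D})$ with
\begin{equation*}
D=\prod_{i}(-p_i)
\end{equation*}
over subsets of the primes dividing $n$, since each $p_i\equiv 3\pmod 4$; possibly $D$ also involves $-1$ and $\pm2$ if $4$ or $8$ divides $n$. Each such $p_i$ satisfies $p_i\equiv 3\pmod4$ and $p_i\equiv\pm2\pmod5$, that is, $p_i\equiv 3,7\pmod{20}$. By Corollary \ref{noZ2Z10overmultiple}, no $E/\Q$ has torsion $\Z/2\Z\times\Z/10\Z$ over $\Q(\sqrt{D})$. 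This is a contradiction.

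The fields $\Q(i)$, $\Q(\sqrt{\pm2})$ and their products with $D$ need separate treatment. For the cyclotomic ones I would invoke Najman's classification over $\Q(i)$. I would also check whether the hypotheses on $L$ in fact exclude $2$-power roots of unity, in which case these fields do not arise. This completes the elimination, leaving exactly $\Z/2\Z\times\Z/12\Z$, $\Z/16\Z$ and $\Z/2\Z\times\Z/16\Z$ beyond Mazur's groups.
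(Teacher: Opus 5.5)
Your Step 2 is a genuine gap, and the ways you propose to close it would not work as you have set things up. After Step 1 you go back from the twist to $E$ and try to show that the $5$-part and the $2$-part of $E(\Q(\zeta_n))_{\text{tors}}$ live over one quadratic field. Nothing forces this on general grounds. The point of order $5$ on $E$ is defined over $\Q(\sqrt d)$ and the non-rational $2$-torsion over $\Q(\sqrt{d'})$. When $d\neq d'$ modulo squares, $E$ acquires $\Z/2\Z\times\Z/10\Z$ only over the biquadratic field $\Q(\sqrt d,\sqrt{d'})$. Twisting a rational curve that has a rational point of order $5$ and full $2$-torsion over $\Q(\sqrt{d'})$ by an unrelated $d$ produces exactly this configuration. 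So neither a ``single quadratic character'' argument nor the known torsion lists over biquadratic fields (which contain $\Z/2\Z\times\Z/10\Z$) can exclude it. In your setting it is excluded only by the arithmetic input of Corollary \ref{noZ2Z10overmultiple}, and that corollary has to be applied to the right curve.

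The missing idea is to stay with the twist. Lemma \ref{upgradedfourpointpairing}(2) gives $E_d(\Q)[5]\cong\Z/5\Z$. Quadratic twisting does not change the field generated by the $2$-torsion, since the $2$-torsion points have $y=0$ and the same $x$-coordinates. So $E_d$ also has full $2$-torsion over $\Q(\zeta_n)$, hence, as there is no cubic subfield, over a field $K$ with $[K:\Q]\le 2$. Therefore $E_d(K)\supseteq\Z/2\Z\times\Z/10\Z$ for the \emph{rational} curve $E_d$. If $K=\Q$ this contradicts Mazur's theorem. Otherwise $K=\Q(\sqrt D)$ with $D=\prod(-p_i)$ and $p_i\equiv 3,7\pmod{20}$, and Corollary \ref{noZ2Z10overmultiple} applied to $E_d$ gives the contradiction. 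To pass from containment to isomorphism, use Najman's list, or directly the absence of non-cuspidal $K$-points on $X_1(2,10)$. This is precisely the paper's proof.

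Your concerns in Step 3 about $\Q(i)$ and $\Q(\sqrt{\pm 2})$ are moot. $L$ is generated by roots of unity of odd prime-power order, so you may take $n$ odd, and then every quadratic subfield of $\Q(\zeta_n)$ has the form $\Q(\sqrt D)$ above.
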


\begin{proof}
  
The proof is almost the same as Theorem \ref{4exceptions}, but here we also need to eliminate $\Z/2\Z \times \Z/10\Z$.  

Suppose 
\begin{equation*}
    E(\Q(\zeta_n))_{\text{tors}} \cong \Z/2\Z \times \Z/10\Z
\end{equation*}
for some positive integer $n$ whose prime factors are all of the form $20k+3$ or $20k+7$.  
By Lemma \ref{upgradedfourpointpairing}, we can conclude that $    E_d(\Q)[5] \cong \Z/5\Z$
for some square-free integer $d$ with $\sqrt{d} \in \Q(\zeta_n)$.  

Since $E$ has full $2$-torsion defined over $\Q(\zeta_n)$, the quadratic twist $E_d$ also has full $2$-torsion over $\Q(\zeta_n)$.  
Because $\Q(\zeta_n)$ has no cubic subfield, the $2$-torsion of $E_d$ must be contained in a quadratic extension. Hence, there exists a quadratic field $K$ such that
\begin{equation*}
    E_d(K)_{\text{tors}} \cong \Z/2\Z \times \Z/10\Z
\end{equation*}
where $K$ is the field of definition of the $2$-torsion.  

The splitting field argument allows the possibility $K=\Q$, but this would contradict Mazur's theorem because no elliptic curve over $\Q$ has torsion $\Z/2\Z \times \Z/10\Z$.  
Thus $K$ must be a quadratic subfield of $\Q(\zeta_n)$, i.e., $K = \Q(\sqrt{D})$ for some square-free integer $D$. By Proposition \ref{multipleprimesfactored}, $D$ has a specific form, but Corollary \ref{noZ2Z10overmultiple} then implies that there is no elliptic curve over $\Q$ with 
\begin{equation*}
    E(\Q(\sqrt{D}))_{\text{tors}} \cong \Z/2\Z \times \Z/10\Z,
\end{equation*}
and therefore such a torsion subgroup cannot occur over $\Q(\zeta_n)$.
\end{proof}

\begin{remark}
   We can see that by reducing the density of the primes in our set from $3/16$ to $1/8$, we are able to eliminate one group structure. 
\end{remark}

\begin{remark}
We initially wanted to prove a similar theorem by replacing the condition 
$p \not\equiv 4 \pmod{5}$ with $p \not\equiv 7 \pmod{8}$, in order to show that 
$\Z/2\Z \times \Z/12\Z$ is not realized. This motivated the proof of Proposition \ref{twoprimes12}, which allows us to conclude that 
$\Z/2\Z \times \Z/12\Z$ is not realized over $\Q(\sqrt{d})$ for any 
$d \in \{\sqrt{-p},\sqrt{-pq}\}$ where $p,q$ are primes of the form $24k+11$.  

However, unlike the $\Z/2\Z \times \Z/10\Z$ case, the torsion $\Z/2\Z \times \Z/12\Z$ can still be realized over the biquadratic field 
$\Q(\sqrt{-p},\sqrt{-q})$ without being realized over any of its quadratic subfields.  

As an illustration (not a counterexample for our application, but giving the 
idea), consider the elliptic curve \lmfdbecx{30}{a}{6}. This curve has the desired 
torsion subgroup over three distinct biquadratic fields, yet its torsion over each 
quadratic subfield remains $\Z/2\Z \times \Z/6\Z$.
\end{remark}

\begin{theorem}\label{metabelian}
   Let $E/\Q$ be an elliptic curve, $p>7$ a prime, and $d$ an integer. Then
   \begin{equation*}
       E(\Q(\mu_{p^\infty},d^{1/p^\infty}))_{\text{tors}} = E(\Q(\zeta_p))_{\text{tors}}.
   \end{equation*}
\end{theorem}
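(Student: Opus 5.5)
Let $F=\Q(\mu_{p^\infty},d^{1/p^\infty})$. The strategy is to reduce in two steps: first from $F$ to the cyclotomic field $\Q(\mu_{p^\infty})$, using Lemma \ref{upgradedfourpointpairing} and the structure of $\Gal(F/\Q(\mu_{p^\infty}))$; then from $\Q(\mu_{p^\infty})$ to $\Q(\zeta_p)$, using the result of Guzvi\'c and Krijan quoted in the introduction, $E(\Q(\mu_{p^\infty}))_{\text{tors}}=E(\Q(\zeta_p))_{\text{tors}}$ for $p>3$.

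Put $K=\Q(\mu_{p^\infty})$. The extension $F/K$ is Galois. By Kummer theory its group embeds into $\Z_p$, so it is either trivial or isomorphic to $\Z_p$. Thus $F/K$ is a pro-$p$ extension, and every element of the finite quotients $\Gal(M/K)$, for $K\subset M\subset F$ finite, has $p$-power order. Since $E(F)_{\text{tors}}$ is a finitely generated torsion group, it is finite and is already defined over such a finite layer $M$. We may therefore work with $\Gal(M/K)$, which is a cyclic $p$-group.

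\textbf{Step 1: odd primary torsion.} Let $q=\ell^k$ be a prime power with $\ell$ odd and $\ell\neq p$, or let $\ell=2$. Then $\phi(q)=\ell^{k-1}(\ell-1)$. The condition $\gcd(\phi(q),p^j)=1$ can fail only if $p\mid \ell-1$ or $\ell=p$, assuming $k\ge 1$. The possible torsion primes $\ell$ for $E/\Q$ over abelian-like fields are small; I would bound them using Theorem \ref{rationalisogeny} together with Lemma \ref{n-isogeny} applied to $M$, or Merel-type bounds over $M$. In every case $p>7$ cannot divide $\ell-1$ for $\ell\in\{3,5,7,11,13,17,19,37,\dots\}$ unless $\ell=p$ itself is possible. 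For instance $p=11$ divides $22$ but $23$ is not an isogeny degree, and $p=13$, $\ell=53$ is likewise ruled out.

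\textbf{Step 2: cyclic versus full torsion.} Lemma \ref{upgradedfourpointpairing}(1) handles the cyclic $\ell$-parts of $E(M)$, giving $E(M)[q]=E(K)[q]$, but it assumes $E(M)[q]$ is cyclic. For non-cyclic parts, i.e.\ full $\ell$-torsion, I would instead argue directly. The action of $\Gal(M/K)$, a $p$-group, on $E[\ell^\infty](M)/E[\ell^\infty](K)$ is constrained: a $p$-group acting on $\ell$-groups with $\ell\neq p$ has order coprime to the relevant quotient $\GL_2(\Z/\ell^k)$ order unless $p\mid \ell(\ell^2-1)$. This again forces a large $\ell$, which isogeny bounds exclude.

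\textbf{Step 3: the prime $p$ itself.} The main obstacle is $p$-power torsion. If $E(M)[p]\neq 0$, then $E$ has a rational $p$-isogeny, or a point of order $p$ over an abelian field. For $p>7$, Theorem \ref{rationalisogeny} leaves only $p\in\{11,13,17,19,37,43,67,163\}$. For these finitely many $j$-invariants I would invoke the field-of-definition table \cite[Table 2]{Omer}, as in Theorem \ref{classificationtheoremmain}(vii). Since $M/K$ is a $p$-extension, the kernel field degree, which divides $p-1$ times $2$, forces the point to be already over $K$. Then the Guzvi\'c--Krijan equality finishes. This last step is where I expect real care, particularly for $p=13$ and the CM cases.
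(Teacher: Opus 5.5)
There is a genuine gap: your plan never rules out full $p$-torsion, i.e.\ $E[p]\subseteq E(F)$, and almost all of the paper's work goes into exactly this case.

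\begin{itemize}
\item Lemma \ref{upgradedfourpointpairing} only treats cyclic $E(L)[q]$.
\item Your $\GL_2$-order argument in Step 2 explicitly excludes $\ell=p$.
\item The assertion opening Step 3 (``if $E(M)[p]\neq 0$ then $E$ has a rational $p$-isogeny'') holds only when $E(F)[p]$ is a line. Full $p$-torsion over $F$ does not hand you a Galois-stable line.
\end{itemize}

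Nor can a group-order argument force the pro-$p$ group $\Gal(M/K)$ to act trivially. $\GL_2(\mathbb{F}_p)$ contains unipotent elements of order $p$, and $\Gal(\Q(\zeta_p,d^{1/p})/\Q)\cong C_p\rtimes C_{p-1}$ embeds in a Borel subgroup. So $\Q(E[p])=\Q(\zeta_p,d^{1/p})$ is not excluded a priori. The paper's final remark exhibits full $7$-torsion over such a Kummer extension with $d\in\Q(\zeta_7)$, and this is exactly where $p>7$ is needed.

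The paper's argument runs as follows. The Weil pairing gives $\Q(\zeta_p)\subseteq\Q(E[p])\subseteq F$. Since $\Gal(F/\Q(\zeta_p))$ is pro-$p$, $|\Gal(\Q(E[p])/\Q)|$ is $p-1$ or $p(p-1)$.
\begin{itemize}
\item In the first case $\Q(E[p])/\Q$ is abelian, which is impossible for $p\ge 7$ by the abelian division field theorem.
\item In the second case the normal Sylow $p$-subgroup puts the image in a Borel subgroup, so $E$ has a rational $p$-isogeny. The primes $p\in\{11,13,17,19,37,43,67,163\}$ are then eliminated one by one: CM theory in the CM cases, fields of definition of isogeny kernels for $17$ and $37$, an explicit Kummer computation for $11$, and the classification of $13$-adic images for $13$.
\end{itemize}
Your proposal needs this step or a substitute for it.

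There are also smaller errors.
\begin{itemize}
\item In Step 1, the claim that $p>7$ never divides $\ell-1$ for an isogeny prime $\ell$ is false. We have $11\mid\phi(67)=66$, and $67$ is a rational isogeny degree, so Lemma \ref{upgradedfourpointpairing}(1) does not apply to $(p,q)=(11,67)$; the paper treats this case separately. In your framework all cyclic parts can be handled at once. If $E(F)[\ell^\infty]$ is cyclic, $G_\Q$ acts on it through a character that is trivial on $G_F$. That character therefore factors through $\Gal(F/\Q)^{\mathrm{ab}}$, which cuts out $\Q(\mu_{p^\infty})$, because complex conjugation acts by inversion on the Kummer part.
\item In Step 2, full $\ell$-torsion for odd $\ell\neq p$ cannot be excluded by isogeny bounds, since full torsion yields no isogeny. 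The Weil pairing does it: $\zeta_\ell\in F$ forces $\ell=p$.
\item $E(F)$ is not known to be finitely generated, since $F/\Q$ is infinite. Finiteness of $E(F)_{\text{tors}}$ must come out of your bounds rather than be assumed.
\item The detour through $\Q(\mu_{p^\infty})$ and Guzvi\'c--Krijan is harmless but unnecessary. $\Gal(F/\Q(\zeta_p))$ is already pro-$p$, which is why the paper applies Lemma \ref{upgradedfourpointpairing} directly with $K=\Q(\zeta_p)$.
\end{itemize}
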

\begin{proof}
The proof consists of two parts. In the first part, we show that $E$ cannot 
have full $p$-torsion over $\Q(\mu_{p^\infty},d^{1/p^\infty})$ by proving that if 
it did, then $E$ would have a rational $p$-torsion point, which contradicts 
Mazur's theorem. In the second part, we show that
\begin{equation*} 
E(\Q(\mu_{p^\infty},d^{1/p^\infty}))_{\mathrm{tors}} \cong \Z/N\Z 
\end{equation*}
or
\begin{equation*} 
E(\Q(\mu_{p^\infty},d^{1/p^\infty}))_{\mathrm{tors}} \cong \Z/2\Z \times \Z/2N\Z 
\end{equation*} 
for some positive integer $N$ such that
\begin{equation*}
    N\leq 19, \quad \text{or} \quad N\in\{21,25,27,37,43,67,163\}.
\end{equation*}
We then show that all of this torsion must be defined over $\Q(\zeta_p)$. 
Since $\Q(\mu_{p^\infty},d^{1/p^\infty})$ does not contain $\sqrt{-1}$, we know 
that $E$ cannot have full $4$-torsion over this field by the Weil pairing. Assume 
that 
\begin{equation*}
E(\Q(\mu_{p^\infty},d^{1/p^\infty}))[\ell] \cong \Z/\ell\Z \times \Z/\ell\Z
\end{equation*} 
for some odd prime $\ell$. Again, by the Weil pairing, our field must contain a 
primitive $\ell$-th root of unity, which implies that $\ell = p$. 

Next, we show that if $p>7$, then $E$ cannot have full $p$-torsion over this 
field. From Galois representations, we have the injection 
\begin{equation*} 
\Gal(\Q(E[p])/\Q) \hookrightarrow \GL_2(\Z/p\Z).
\end{equation*} 
Combining our assumption with the Weil pairing, we obtain
\begin{equation*} 
\Q(\zeta_p) \subset \Q(E[p]) \subset \Q(\mu_{p^\infty},d^{1/p^\infty}). 
\end{equation*} 
Hence, $\Z/(p-1)\Z$ is a quotient of $\Gal(\Q(E[p])/\Q)$, which is in turn a quotient of 
\begin{equation*}
\Gal(\Q(\mu_{p^\infty},d^{1/p^\infty})/\Q).
\end{equation*} 
This implies that the order of $\Gal(\Q(E[p])/\Q)$ is $(p-1)p^k$ for some $k \ge 0$. 
Since the order of $\GL_2(\Z/p\Z)$ is $p(p-1)^2(p+1)$, the order of 
$\Gal(\Q(E[p])/\Q)$ must be either $p-1$ or $p(p-1)$.

Let 
\begin{equation*} 
G = \rho(\Gal(\overline{\Q}/\Q)) \cong \Gal(\Q(E[p])/\Q)
\end{equation*} 
be the image of the Galois representation. We use the classification of maximal 
subgroups of $\GL_2(\Z/p\Z)$ due to Serre \cite{Serremaximal}, as well as \cite[Theorem 7.2]{maximalsubgroups}. 

If the order of $G$ is not divisible by $p$, then $G \cong \Z/(p-1)\Z$. Let $H$ be 
the projective image of $G$ in $\PGL_2(\Z/p\Z)$. Clearly, $H$ is cyclic, so $G$ is 
contained in a Cartan subgroup. We will show that $G$ must in fact be contained in 
a split Cartan subgroup. 

If the order of $G$ is divisible by $p$, then $G$ is contained in a Borel subgroup 
of $\GL_2(\Z/p\Z)$. In the first case, $G$ is cyclic of order $p-1$, hence abelian. 
Then, by \cite[Theorem 1.1]{abeliandivision}, we conclude that $p \geq 7$ is not 
possible. In the second case, by choosing a suitable basis for the $p$-torsion, we 
can write 
\begin{equation*} 
G \leq \Big\{ \begin{bmatrix} a & b\\ 0 & d \end{bmatrix} : a,b,d \in \Z/p\Z, \; a,d \not\equiv 0 \pmod{p} \Big\}.
\end{equation*} 

If the chosen basis for the $p$-torsion is given by points $P, Q \in E[p]$, then 
the action of $\Gal(\overline{\Q}/\Q)$ always sends $Q$ to a multiple of itself. 
Therefore, $\langle Q \rangle \subset E[p]$ is fixed by $\Gal(\overline{\Q}/\Q)$, 
and hence it is the kernel of a rational $p$-isogeny of $E$. This implies that $E$ 
must admit a rational $p$-isogeny, so by Theorem \ref{rationalisogeny} we obtain 
\begin{equation*}
p \le 19 \quad \text{or} \quad p \in \{37,43,67,163\}.
\end{equation*} 
If $p \in \{19,43,67,163\}$, then $E$ has complex multiplication by Theorem 
\ref{rationalisogeny}. Moreover, for each of these primes, there is a unique 
rational $j$-invariant corresponding to a rational $p$-isogeny \cite[Table 2]{Omer}.
Checking their geometric endomorphism rings on LMFDB, 
we see that they are 
\begin{equation*}
O_K = \Z\Big[\frac{1+\sqrt{-p}}{2}\Big], \quad \text{where} \quad K = \Q(\sqrt{-p}).
\end{equation*} 

From CM theory \cite[Corollary 5.7]{SilvermanA}, we know that if we adjoin all 
the $x$-coordinates of the torsion points of $E$ to $K$, we obtain the maximal 
abelian extension of $K$. Specifically, in our situation, the Galois group 
\begin{equation*}
\Gal(\Q(x(E[p]))/\Q(\sqrt{-p}))
\end{equation*} 
must be abelian. Observe that this extension has degree $p(p-1)/2$, which is odd. 

We also know that the extension 
\begin{equation*}
\Q(E[p])/\Q(x(E[p]))
\end{equation*} 
is Galois, with degree $1$, $2$, or $4$. It follows that 
\begin{equation*} 
\Q(E[p]) = \Q(x(E[p])),
\end{equation*} 
so that
\begin{equation*} 
\Gal(\Q(E[p])/\Q(\sqrt{-p})) 
\end{equation*} 
is abelian. However, by \cite[Theorem 1.1]{abeliandivision}, $G$ cannot be 
abelian if $p \ge 7$, so $G$ cannot be isomorphic to $C_{p(p-1)}$. This implies 
\begin{equation*}
\Q(E[p]) \cap \Q(\mu_{p^\infty}) = \Q(\zeta_p).
\end{equation*} 

Consequently, we must have 
\begin{equation*}
\Q(E[p]) = \Q(\zeta_p, d^{1/p})
\end{equation*} 
for some integer $d$, so that 
\begin{equation*}
G \cong C_p \rtimes C_{p-1}.
\end{equation*} 
Then
\begin{equation*}
\Gal(\Q(E[p])/\Q(\sqrt{-p})) \cong C_p \rtimes C_{(p-1)/2},
\end{equation*} 
which is non-abelian for any $p > 3$. Hence, the field of definition of the
$p$-torsion cannot be contained in $\Q(\mu_{p^\infty}, d^{1/p^\infty})$ if $p \in \{19,43,67,163\}$.

If $p=17$, there are two rational $j$-invariants corresponding to elliptic curves 
with a rational $17$-isogeny \cite{Omer}. The field of 
definition of such a rational $17$-isogeny is contained in $\Q(\zeta_{85})$, but 
not in $\Q(\zeta_5)$ or $\Q(\zeta_{17})$. Moreover, the degree of this extension 
divides $16$. Therefore, for any field of the form $\Q(\mu_{17^\infty}, 
d^{1/17^\infty})$, the intersection with the field of definition of the 
$17$-isogeny is contained in $\Q(\zeta_{17})$. This follows because the Sylow
$17$-subgroup of $\Gal(\Q(\mu_{17^\infty}, d^{1/17^\infty})/\Q)$ is unique,
so there is 
a unique subfield of degree $16$, namely $\Q(\zeta_{17})$. 

In conclusion, the full $17$-torsion cannot be contained in a field of the form 
$\Q(\mu_{17^\infty}, d^{1/17^\infty})$, since not even the kernel of the rational 
$17$-isogeny is contained there.  

A similar argument applies for $p=37$. There are two rational $j$-invariants 
corresponding to elliptic curves with a rational $37$-isogeny. For one of these 
curves, the field of definition of the $37$-isogeny is $\Q(\zeta_{35})^+$, and it 
is contained in $\Q(\zeta_{1295})$, but not in any $\Q(\zeta_n)$ where $n$ is a 
proper divisor of $1295$. Using the same reasoning as above, we again conclude 
that the full $37$-torsion cannot be contained in a field of the form 
$\Q(\mu_{37^\infty}, d^{1/37^\infty})$.  

Note that in \cite{Omer} we worked with the field of definition of the
$x$-coordinates of the torsion points. Hence, these fields do not become smaller even 
if we consider a quadratic twist of the same elliptic curves.

If $p=11$, the situation is slightly more complicated, since the field of 
definition of the kernel of the $11$-isogeny is contained in $\Q(\zeta_{11})$, so 
we cannot use the same arguments as in the $p=17$ or $p=37$ cases. Fortunately, 
there are only three different rational $j$-invariants, so we can use \texttt{SageMath} to 
compute the fields of definition of their $11$-torsion.  

The elliptic curve \lmfdbecx{121}{b}{2} has CM over $\Z\Big[\frac{1+\sqrt{-11}}{2}\Big]$, so we can apply the same argument as in the $p \in \{19,43,67,163\}$ 
case. For the other two curves, \texttt{SageMath} shows that the field of definition of the
$11$-torsion is of the form 
\begin{equation*}
\Q(\zeta_{11}, d^{1/11})
\end{equation*} 
for some $d \in \Q(\zeta_{11})$, but $d$ is not equivalent to a rational number up 
to $11$-th powers in $\Q(\zeta_{11})$. Therefore, this rules out the case $p=11$.

Finally, if $p=13$, the situation becomes more technical. We have shown that the 
order of $G$ is divisible by $13$, so $G$ is contained in a Borel subgroup and 
therefore $E$ has a rational $13$-isogeny. We know that there is no rational 
elliptic curve with CM and a $13$-rational isogeny simultaneously.  

If $E$ does not have CM, the possible images of the Galois representation are 
known by the work of Balakrishnan et al. \cite{Balakrishnan1, Balakrishnan2}. 
Another useful reference is the work of Rouse et al. \cite{PossibleGalois}, where 
they address the problem of determining $\ell$-adic images of Galois for rational 
elliptic curves for all primes $\ell$. Since $G$ cannot be abelian 
\cite{abeliandivision}, it would have to be isomorphic to 
\begin{equation*}
C_{13} \rtimes C_{12},
\end{equation*} 
but this is not possible by the classification of $13$-adic images of Galois.  

Hence, we conclude that the torsion subgroup 
\begin{equation*}
E(\Q(\mu_{p^\infty}, d^{1/p^\infty}))_{\mathrm{tors}}
\end{equation*} 
must be one of the forms described previously.

In the rest of the proof, we first show that all $2^\infty$-torsion belongs to $\Q(\zeta_p)$, and then we show that all odd-order torsion belongs to $\Q(\zeta_p)$.  

Let us show that 
\begin{equation*}
E(\Q(\mu_{p^\infty}, d^{1/p^\infty}))[2^\infty] = E(\Q(\zeta_p))[2^\infty].
\end{equation*} 
Since our field does not contain $\sqrt{-1}$, we have 
\begin{equation*} 
E(\Q(\mu_{p^\infty}, d^{1/p^\infty}))[2^\infty] \cong \Z/2^k\Z \quad \text{or} \quad 
E(\Q(\mu_{p^\infty}, d^{1/p^\infty}))[2^\infty] \cong \Z/2\Z \times \Z/2^{k+1}\Z
\end{equation*} 
for some $k \ge 0$. By Lemma \ref{n-isogeny}, $E$ has a rational $2^k$-isogeny, so $k \le 4$ by Theorem \ref{rationalisogeny}. Therefore, 
\begin{equation*} 
E(\Q(\mu_{p^\infty}, d^{1/p^\infty}))[2^\infty] \subseteq E[32].
\end{equation*} 

Let 
\begin{equation*} 
L = \Q(E[32]) \cap \Q(\mu_{p^\infty}, d^{1/p^\infty}).
\end{equation*} 
We will show that the order of $\Gal(L/\Q)$ is not divisible by $p$. This implies 
that $\Gal(L/\Q)$ has order dividing $p-1$. Moreover, $\Gal(L/\Q)$ is a quotient of 
\begin{equation*} 
\Gal(\Q(\mu_{p^\infty}, d^{1/p^\infty})/\Q),
\end{equation*} 
so its kernel must contain the unique Sylow $p$-subgroup. Hence, $L \subset \Q(\zeta_p)$.  

To show that $\Gal(L/\Q)$ is not divisible by $p$, it suffices to show that 
\begin{equation*} 
\Gal(\Q(E[32])/\Q)
\end{equation*} 
has order not divisible by $p$, since $\Gal(L/\Q)$ is a quotient of this group. 
We know that 
\begin{equation*} 
\Gal(\Q(E[32])/\Q) = 3 \cdot 2^{17},
\end{equation*} 
so $p$ does not divide this order for all $p>3$. Therefore, all $2$-primary torsion over $\Q(\mu_{p^\infty}, d^{1/p^\infty})$ is defined over $\Q(\zeta_p)$.  

Now let $q$ be an odd prime power dividing $N$, where $N$ is the integer appearing 
in the description of $E(\Q(\mu_{p^\infty}, d^{1/p^\infty}))_{\mathrm{tors}}.$
We will show that 
\begin{equation*} 
E(\Q(\mu_{p^\infty}, d^{1/p^\infty}))[q] = E(\Q(\zeta_p))[q].
\end{equation*} 
Observe that $E$ has a rational $N$-isogeny by Lemma \ref{n-isogeny}, so 
\begin{equation*} 
q \in \{1,3,5,7,11,13,17,19,25,27,37,43,67,163\}.
\end{equation*} 
If $q=67$, the field of definition of the kernel of the rational $67$-isogeny 
contains $\Q(\zeta_{67})^+$ \cite{Omer}, so it is not contained in a 
field of the form $\Q(\mu_{11^\infty}, d^{1/11^\infty})$. In all other cases, $p$ 
does not divide $\phi(q)$.  

Since $E(\Q(\mu_{p^\infty}, d^{1/p^\infty}))_{\mathrm{tors}}$ is finite, let $L$ 
be the field of definition of this torsion subgroup. Then $L/\Q$ is a Galois 
extension. Moreover, $\Gal(L/\Q(\zeta_p))$ is the unique Sylow $p$-subgroup of 
$\Gal(L/\Q)$, so it is normal. Hence, $L/\Q(\zeta_p)$ is Galois.  

Finally, the result follows immediately from Lemma \ref{upgradedfourpointpairing}, 
taking $K = \Q(\zeta_p)$, since $\Gal(L/\Q(\zeta_p))$ is a $p$-group and the order 
of any element is a power of $p$. Since $p$ does not divide $\phi(q)$, we can 
apply the first part of the lemma to conclude.
\end{proof}

\begin{remark}
  The problem of whether $E$ can have full $p$-torsion over the field $\Q(\mu_{p^\infty}, d^{1/p^\infty})$
is an interesting one. We have shown that this is not possible when $p > 7$. There are examples of elliptic curves with full $p$-torsion over $\Q(\zeta_p)$ for $p = 2, 3, 5$, which settles the problem for these primes.  

The case $p = 7$ is particularly interesting. Full $7$-torsion cannot be realized 
over $\Q(\zeta_7)$ \cite{abeliandivision}, but in principle it could be realized over
a field of the form $\Q(\mu_{7^\infty}, d^{1/7^\infty}).$
Indeed, if we allow $d$ to lie in $\Q(\zeta_7)$ rather than in $\Q$, the elliptic 
curve \lmfdbecx{26}{b}{2} has full $7$-torsion over a Kummer extension $\Q(\zeta_7, d^{1/7})$
for some $d \in \Q(\zeta_7)$, as confirmed by our \texttt{SageMath} calculations. The extension has the same group-theoretical properties whether $d$ is rational or not: it is a degree $42$ extension with Galois group isomorphic to $C_7 \rtimes C_6.$

However, the question of whether there exists an elliptic curve $E/\Q$ with full $7$-torsion over $\Q(\zeta_7, d^{1/7})$
for some \emph{rational} $d$ remains open and is beyond the scope of our work. This problem is considerably harder than the $p=11$ case, because there are only three different rational $j$-invariants with a rational $11$-isogeny, whereas there are infinitely many different rational $j$-invariants with a rational $7$-isogeny. Therefore, we cannot check all of them manually as we did in the proof of the $p=11$ case.
\end{remark}

Another interesting application of Lemma \ref{upgradedfourpointpairing} concerns $\Z_p$-extensions. Let $K$ be a number field, and let $K_{\Z_p}$ denote the composite of all $\Z_p$-extensions of $K$.  

We know that $K_{\Z_p}/K$ is abelian, and 
\begin{equation*}
\Gal(K_{\Z_p}/K) \cong \Z_p^a
\end{equation*} 
for some nonnegative integer $a = a_p(K)$ satisfying
\begin{equation*}
r_2 + 1 \le a_p(K) \le d,
\end{equation*} 
where $r_2$ denotes the number of pairs of complex conjugate embeddings of $K$, and $d = [K:\Q]$ is the degree of $K$ over $\Q$ \cite[Theorem 2-3]{iwasawa}.  

This allows us to state torsion classification theorems in terms of the composite of all $\Z_p$-extensions of $K$, providing a natural setting to analyze $p$-primary torsion over infinite abelian extensions.

\begin{theorem}\label{Zp-extensions}
Let $E/\Q$ be an elliptic curve. Let $K$ be a number field, and let $K_{\Z_p}$ denote the composite of all $\Z_p$-extensions of $K$ for some prime $p>5$. Let $q$ be an odd prime power such that
\begin{equation*}
E(K_{\Z_p})[q] \cong \Z/q\Z.
\end{equation*}
Then
\begin{equation*}
E(K_{\Z_p})[q] = E(K)[q].
\end{equation*}
\end{theorem}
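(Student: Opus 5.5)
We apply part (1) of Lemma \ref{upgradedfourpointpairing} with base field $K$ and $L=K_{\Z_p}$. Since $E$ is defined over $\Q$, we may regard it as an elliptic curve over $K$. The extension $K_{\Z_p}/K$ is Galois, and by Iwasawa's theorem $\Gal(K_{\Z_p}/K)\cong \Z_p^{a}$ with $a=a_p(K)$.

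The first issue is that the lemma assumes every element of $\Gal(L/K)$ has finite order, which fails for the pro-$p$ group $\Z_p^a$. To avoid this, we pass to a finite layer. The group $E(K_{\Z_p})[q]\cong \Z/q\Z$ is finite, so it is defined over a finite subextension $K\subset F\subset K_{\Z_p}$. We take $F$ to be the field generated over $K$ by the coordinates of these points. This $F$ is Galois over $K$, because $\Gal(K_{\Z_p}/K)$ is abelian and so every subextension is normal. Its group $\Gal(F/K)$ is a finite quotient of $\Z_p^a$, hence a finite abelian $p$-group, and every element has order a power of $p$.

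We also need $E(F)[q]\cong\Z/q\Z$. Since $F\subset K_{\Z_p}$, we have $E(F)[q]\subseteq E(K_{\Z_p})[q]$. Conversely, all points of $E(K_{\Z_p})[q]$ are defined over $F$ by construction, so the two groups are equal.

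Next we check the coprimality condition $\gcd(\phi(q),\ord(\sigma))=1$ for all $\sigma\in\Gal(F/K)$. Since $\ord(\sigma)$ is a power of $p$, it suffices to show $p\nmid\phi(q)$. Write $q=\ell^k$ with $\ell$ an odd prime, so $\phi(q)=\ell^{k-1}(\ell-1)$.

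This is the main obstacle: the statement only assumes $p>5$, so a priori $p\mid \ell-1$ or $p=\ell$ could occur. We handle it using the rational isogeny bound, as in the proof of Theorem \ref{metabelian}. Since $E(F)[q]\cong\Z/q\Z$ with $F/\Q$ possibly non-Galois, we instead work over the Galois closure, or over $K_{\Z_p}$ viewed as a sub-field of a Galois extension of $\Q$. Either way, Lemma \ref{n-isogeny} suggests that $E$ admits a rational $q$-isogeny, and then Theorem \ref{rationalisogeny} gives
\begin{equation*}
q\in\{3,5,7,9,11,13,17,19,25,27,37,43,67,163\}.
\end{equation*}
For each such $q$ we compute $\phi(q)$ and find that the only prime factors are $2,3,5,7,11$, and $67\mid\phi(163)$. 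For example $\phi(163)=162$, $\phi(67)=66$, $\phi(43)=42$, $\phi(37)=36$ and $\phi(27)=18$.

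This list leaves the cases $p=7$ (from $q\in\{43\}$), $p=11$ (from $q=67$), and possibly $p=q=\ell$. I expect these residual cases to be the delicate part. I would treat them as Theorem \ref{metabelian} does, using the known fields of definition of the kernels of the isogenies from \cite[Table 2]{Omer}. For instance, the kernel of the $67$-isogeny needs $\Q(\zeta_{67})^+$, which has degree $33$ and so cannot lie inside the pro-$p$ tower $K_{\Z_p}/K$ beyond $K$ itself. If needed, we would handle the issue by shrinking to $F\cap$ the relevant field.

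Once $p\nmid\phi(q)$ is established, Lemma \ref{upgradedfourpointpairing}(1) gives $E(F)[q]=E(K)[q]$, and hence $E(K_{\Z_p})[q]=E(K)[q]$.
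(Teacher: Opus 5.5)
Your outline is the paper's. You get a rational $q$-isogeny from Lemma~\ref{n-isogeny}, use Theorem~\ref{rationalisogeny} to get $p\nmid\phi(q)$ except for $(p,q)=(7,43),(11,67)$, apply Lemma~\ref{upgradedfourpointpairing}(1) on a finite layer, and treat the two exceptions via the fields of definition of the isogeny kernels. Your layer $F=K(E(K_{\Z_p})[q])$ is a better choice than the paper's $L$ (the Galois closure over $\Q$), because $\Gal(F/K)$ is visibly a finite quotient of $\Z_p^a$. Two steps, however, are not established.

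\textbf{The isogeny step.} Passing to the Galois closure $\widetilde F$ of $F$, or to a Galois overfield of $K_{\Z_p}$, does not let you use Lemma~\ref{n-isogeny}. What that lemma really uses is that $E(M)[q]$ is $\Gal(\overline{\Q}/\Q)$-stable when $M/\Q$ is Galois, and you need $E(M)[q]$ itself to be cyclic of order $q$. That fails for $\widetilde F$: for $q=\ell$ prime, $E(\widetilde F)[\ell]$ contains every conjugate $\sigma(P)$. So it is cyclic only if $\langle P\rangle$ is already Galois-stable, which makes the argument circular; otherwise it is all of $E[\ell]$ and gives no isogeny. What you need is that $K_{\Z_p}$ itself is Galois over $\Q$. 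Then $E(K_{\Z_p})[q]\cong\Z/q\Z$ is Galois-stable and is the kernel of a rational $q$-isogeny. The paper simply asserts that $K_{\Z_p}/\Q$ is Galois. This holds when $K/\Q$ is Galois, since any $\sigma$ with $\sigma(K)=K$ permutes the $\Z_p$-extensions of $K$, and that covers the quadratic fields used later. For non-Galois $K$, neither your argument nor the paper's produces the isogeny. The $K$-rational isogeny you do get for free bounds nothing, so you should restrict to $K/\Q$ Galois.

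\textbf{The exceptional cases.} Your claim that $\Q(\zeta_{67})^+$ cannot lie in the pro-$p$ tower beyond $K$ does not follow from degrees alone. If $K\cap\Q(\zeta_{67})^+$ is the cubic subfield, then $K\Q(\zeta_{67})^+/K$ is cyclic of degree $11$. A correct argument goes as follows. Suppose $P\in E(K_{\Z_p})$ has order $q\in\{43,67\}$. Then $\Q(x(P))=\Q(\zeta_q)^+$, so $[K\Q(\zeta_q)^+:K]$ is a power of $p$ dividing $(q-1)/2=3p$, hence equals $1$ or $p$.
\begin{itemize}
\item If the degree is $1$, then $x(P)\in K$, so $[K(P):K]\le 2$, and $K(P)\subseteq K_{\Z_p}$ forces $P\in E(K)$.
\item If $3\nmid[K:\Q]$ (e.g.\ $K$ quadratic), the degree is divisible by $3$, so no such $P$ exists.
\item The case of degree exactly $p$ needs extra input, for example ramification above $q$ against the fact that $\Z_p$-extensions are unramified outside $p$.
\end{itemize}
Two slips as well. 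First, $67\nmid\phi(163)=162=2\cdot 3^4$. Second, the case $p=q=\ell$ never arises: $p\mid\ell^{k-1}$ forces $k\ge 2$, i.e.\ $q\in\{9,25,27\}$, where $\ell\le 5<p$.
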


\begin{proof}
We know that $K_{\Z_p}/\Q$ is a Galois extension. By Lemma \ref{n-isogeny}, it follows that $E$ has a rational $q$-isogeny. From Theorem \ref{rationalisogeny}, we know all possible values of $q$, and any prime factor of $\phi(q)$ is at most $11$. Assuming $p>11$, we ensure that $p$ does not divide $\phi(q)$.  

Let $L$ denote the smallest Galois extension of $\Q$ containing both $K$ and $\Q(E(K_{\Z_p})[q])$. Then $L/K$ is a finite Galois extension, and $\Gal(L/K)$ is a $p$-group. Applying Lemma \ref{upgradedfourpointpairing}, we conclude that
\begin{equation*}
E(K_{\Z_p})[q] = E(K)[q].
\end{equation*}

In the exceptional cases $(p,q) = (7,43)$ or $(11,67)$, the theorem does not apply directly. 
However, the result still holds, since $K_{\Z_p}$ does not contain the field $\Q(\zeta_q)^+$ 
\cite[Theorem 1.1, proof]{Omer2}, \cite[Section 6]{Omer}.
\end{proof}

As a natural consequence of Theorem \ref{Zp-extensions}, we can generalize our 
results in \cite{Omer2}. Let $K$ be a quadratic field. If $K$ is real quadratic, then 
$K_{\mathrm{cyc}}$ is the unique $\Z_p$-extension of $K$, which is the case we have 
already treated.  

Now, let $K$ be an imaginary quadratic field. In this case, there are infinitely many distinct $\Z_p$-extensions of $K$, all contained in the compositum
\begin{equation*}
K_{\Z_p} = K_{\mathrm{cyc}} K_{\mathrm{anti}},
\end{equation*}
satisfying
\begin{equation*}
\Gal(K_{\Z_p}/K) \cong \Z_p^2.
\end{equation*}

Instead of considering a single $\Z_p$-extension of $K$, we may work with the entire 
field $K_{\Z_p}$ and classify torsion over it. The following result can be proven in 
a manner similar to \cite[proof of Theorem 1.1]{Omer2}, but it also follows 
directly as a consequence of Theorem \ref{Zp-extensions}.

\begin{theorem}\label{Z_p^2}
Let $E/\Q$ be an elliptic curve. Let $K$ be an imaginary quadratic field, and let $p>5$ be a prime. Let 
\begin{equation*}
K_{\Z_p} = K_{\mathrm{cyc}} K_{\mathrm{anti}}
\end{equation*} 
denote the extension satisfying 
\begin{equation*}
\Gal(K_{\Z_p}/K) \cong \Z_p^2.
\end{equation*} 
Then
\begin{equation*}
E(K_{\Z_p})_{\mathrm{tors}} = E(K)_{\mathrm{tors}}.
\end{equation*}
\end{theorem}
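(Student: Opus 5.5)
Since $E(K)_{\mathrm{tors}} \subseteq E(K_{\Z_p})_{\mathrm{tors}}$ is automatic, the plan is to prove the reverse inclusion one primary component at a time. Throughout I use that $K_{\Z_p}/\Q$ is Galois, because $K_{\mathrm{cyc}}$ and $K_{\mathrm{anti}}$ are both Galois over $\Q$. I also use that every finite subextension $L/K$ of $K_{\Z_p}/K$ has Galois group a finite $p$-group. I will first pin down the possible shapes of $E(K_{\Z_p})_{\mathrm{tors}}$, then treat the $2$-primary part and the odd part separately.

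\textbf{Shape of the torsion.} Full $\ell$-torsion for an odd prime $\ell$ would force $\zeta_\ell \in K_{\Z_p}$ by the Weil pairing. But $K(\zeta_\ell)/K$ has degree dividing $\ell-1$, and if this degree is bigger than $1$ it is not a power of $p$; the exceptional case $\ell-1=p^k$ has $\ell=p+1$ even, so it cannot occur. Hence $\zeta_\ell\in K$, which for an imaginary quadratic $K$ means $\ell=3$ and $K=\Q(\sqrt{-3})$. In that case I need to show $E[3]$ is already defined over $K$. The field $K(E[3])/K$ has Galois group inside $\GL_2(\mathbb F_3)$, whose order is $48$, and a $p$-group with $p>5$ inside it is trivial. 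So full $3$-torsion over $K_{\Z_p}$ already lies over $K$.

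The same argument covers $4$-torsion. Full $4$-torsion over $K_{\Z_p}$ forces $i \in K_{\Z_p}$, so $i\in K$ because $[K(i):K]\le 2$ is prime to $p$. Since $\GL_2(\Z/4\Z)$ has order $96$, again prime to $p$, full $4$-torsion then lies over $K$.

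\textbf{The $2$-primary part.} I would avoid Theorem \ref{Zp-extensions} here, since that theorem only handles cyclic odd $q$. Instead I argue directly. The field $\Q(E[2^n])$ has Galois group a subgroup of $\GL_2(\Z/2^n\Z)$, whose order is $2^{4n-3}\cdot 3$, prime to $p$. Let $L=K(E[2^n])\cap K_{\Z_p}$. Then $\Gal(L/K)$ is simultaneously a quotient of a $p$-group and a subquotient of a group of order prime to $p$, so $L=K$. Applying this to the finite group $E(K_{\Z_p})[2^\infty]$ (finite because $K_{\Z_p}$ is contained in an abelian extension of a finite extension, or more simply because it is bounded by the rational isogeny bound from Lemma \ref{n-isogeny}) gives equality with $E(K)[2^\infty]$.

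\textbf{The odd part.} For an odd prime power $q$, the $q$-primary part is either $\Z/q\Z$ or, only when $K=\Q(\sqrt{-3})$, of the form $\Z/3\Z\times\Z/3^{k}\Z$. In the cyclic case, Theorem \ref{Zp-extensions} gives $E(K_{\Z_p})[q]=E(K)[q]$ directly. In the non-cyclic $3$-primary case, I can either rerun the $\GL_2$ order argument with $\GL_2(\Z/3^k\Z)$, whose order is $3^{4k-3}\cdot 2^4$ and hence prime to $p$, or apply Lemma \ref{upgradedfourpointpairing} after quotienting by $E[3]$. The $\GL_2$ argument is in fact uniform: for every prime $\ell\ne p$, the order of $\GL_2(\Z/\ell^k\Z)$ is divisible by $p$ only when $p$ divides $\ell-1$ or $\ell+1$.

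\textbf{Main obstacle.} The hard step is exactly these primes $\ell\equiv\pm1 \pmod p$, together with $\ell=p$ itself. For those I fall back on Theorem \ref{Zp-extensions}, which rests on the isogeny list of Theorem \ref{rationalisogeny} and the exceptional cases $(7,43)$ and $(11,67)$ treated there. Full $\ell$-torsion is excluded as shown above, so only cyclic $\ell$-parts arise in this range, and Theorem \ref{Zp-extensions} covers them. Assembling the primary components then yields $E(K_{\Z_p})_{\mathrm{tors}}=E(K)_{\mathrm{tors}}$.
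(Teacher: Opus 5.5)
Your route differs from the paper's. The paper imports from \cite{Omer2} three facts: the roots of unity in $K_{\Z_p}$, the shape $\Z/m\Z\times\Z/mN\Z$ of the torsion, and the fact that $2^\infty$- and $3^\infty$-torsion is already defined over $K$. It then applies Theorem \ref{Zp-extensions} to the remaining primes. You replace those citations by a self-contained count. For every prime $\ell$ with $p\nmid \ell(\ell^2-1)$, including $\ell=2,3$, the group $\Gal(K(E[\ell^k])/K)$ has order prime to $p$, so $K(E[\ell^k])\cap K_{\Z_p}=K$. That part is correct and arguably cleaner. However, it leaves $\ell=p$ and $\ell\equiv\pm1\pmod p$ to Theorem \ref{Zp-extensions}, which only treats cyclic $\ell$-parts. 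So in exactly that range everything rests on your exclusion of full $\ell$-torsion, and that step is wrong as written.

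You argue that $[K(\zeta_\ell):K]$, a divisor of $\ell-1$, cannot be a nontrivial power of $p$, but you only rule out $\ell-1=p^k$. Since $K$ is quadratic, $[K(\zeta_\ell):K]=(\ell-1)/2$ whenever $K=\Q(\sqrt{-\ell})$ with $\ell\equiv 3\pmod 4$, and $(\ell-1)/2=p^k$ does occur. For $p=11$, $\ell=23$, $K=\Q(\sqrt{-23})$ one has $[K(\zeta_{23}):K]=11$. Since $23\equiv 1\pmod{11}$, $11$ divides $|\GL_2(\mathbb{F}_{23})|$, so your counting argument gives nothing here. Theorem \ref{Zp-extensions} does not apply to a non-cyclic $23$-part either. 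So nothing in your proposal excludes $E[23]\subseteq E(K_{\Z_{11}})$.

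Degree bookkeeping cannot close this; you need arithmetic input. The standard fix is ramification. Every $\Z_p$-extension, hence $K_{\Z_p}$, is unramified outside the primes above $p$. For a prime $\ell\ne p$ with $\ell\ge 5$, the extension $K(\zeta_\ell)/K$ is ramified above $\ell$, because $\ell$ has ramification index $\ell-1>2$ in $\Q(\zeta_\ell)$ but at most $2$ in $K$. Together with your correct degree argument for $\ell=p$, this gives $\zeta_\ell\notin K_{\Z_p}$ for every prime $\ell\ge 5$. This is the roots-of-unity input the paper obtains by adapting \cite[Lemma 4.7]{Omer2}. With that substitution, the rest of your argument goes through.
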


\begin{proof}
Let $n$ be the number of roots of unity in $K_{\Z_p}$. By a similar argument to \cite[Lemma 4.7]{Omer2}, we have
\begin{equation*}
n = 
\begin{cases}
6 & \text{if } K = \Q(\sqrt{-3}),\\
4 & \text{if } K = \Q(\sqrt{-1}),\\
2 & \text{otherwise}.
\end{cases}
\end{equation*}

Torsion subgroup $E(K_{\Z_p})_{\mathrm{tors}}$ is finite and satisfies
\begin{equation*}
E(K_{\Z_p})_{\mathrm{tors}} \cong \Z/m\Z \times \Z/mN\Z,
\end{equation*}
where $m \in \{1,2,3,4,6\}$ and $N$ is one of the integers appearing in Theorem \ref{rationalisogeny} \cite[Lemma 4.1]{Omer2}.  

All $2^\infty$-torsion and $3^\infty$-torsion is defined over $K$ \cite[Lemma 4.3, 4.4, 4.5]{Omer2}. By Theorem \ref{Zp-extensions}, all $q$-torsion is defined over $K$ for any $q$ that is a power of a prime greater than $3$ and divides $N$.  

Therefore, all torsion over $K_{\Z_p}$ is already defined over $K$, which proves the theorem.
\end{proof}

Finally, we present a corollary that combines our results in \cite{Omer2}, Theorem \ref{Z_p^2}, and the results in Section \ref{descentsection}.

\begin{corollary}
Let $K = \Q(\sqrt{d})$ be a quadratic field, and let $p>5$ be a prime. Let $K_{\Z_p}$ denote the composite of all $\Z_p$-extensions of $K$. Then:
\begin{enumerate}[(i)]
    \item If $d$ is a product of $-p_i$ for $i=1,\dots,n$, where $p_i$ are 
    distinct primes satisfying $p_i \equiv 3,7 \pmod{20}$, then there is no 
    elliptic curve $E/\Q$ with $E(K_{\Z_p})_{\mathrm{tors}} \cong \Z/2\Z \times \Z/10\Z.$

    \item If $d$ is of the form $-p_1$ or $p_1 p_2$, where $p_1, p_2$ are distinct
    primes satisfying $p_1, p_2 \equiv 11 \pmod{24}$, then there is no 
    elliptic curve $E/\Q$ with $  E(K_{\Z_p})_{\mathrm{tors}} \cong \Z/2\Z \times \Z/12\Z.$

    \item If $d = -p$ for any prime number $p$, then there is no elliptic curve $E/\Q$ with $  E(K_{\Z_p})_{\mathrm{tors}} \cong \Z/16\Z.$
\end{enumerate}
\end{corollary}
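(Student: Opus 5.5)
The plan is to reduce every part to the statement $E(K_{\Z_p})_{\mathrm{tors}} = E(K)_{\mathrm{tors}}$ and then invoke the quadratic-field results of Section \ref{descentsection}. The reduction differs with the sign of $d$.

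\emph{Imaginary $K$.} Theorem \ref{Z_p^2} gives $E(K_{\Z_p})_{\mathrm{tors}} = E(K)_{\mathrm{tors}}$ directly.

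\emph{Real $K$.} Here $K_{\Z_p} = K_{\mathrm{cyc}}$, since $a_p(K) = r_2 + 1 = 1$ (Leopoldt holds for abelian fields). Our earlier results on cyclotomic $\Z_p$-extensions \cite{Omer2} give $E(K_{\mathrm{cyc}})_{\mathrm{tors}} = E(K)_{\mathrm{tors}}$ for $p>5$. Alternatively, one can argue directly:
\begin{itemize}
\item[\textbullet] Theorem \ref{Zp-extensions} handles the odd cyclic part, since $K_{\Z_p}$ contains no $\zeta_\ell$ for odd $\ell\neq p$, and $\zeta_p\notin K_{\Z_p}$.
\item[\textbullet] For the $2$-primary part, $\Gal(\Q(E[2^k])/\Q)$ has order $2^a3^b$, so its intersection with the pro-$p$ tower over $K$ is $K$ itself.
\end{itemize}

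\emph{Part (i).} Here $d=\prod(-p_i)$. If $E(K_{\Z_p})_{\mathrm{tors}}\cong \Z/2\Z\times\Z/10\Z$, then $E(K)_{\mathrm{tors}}\cong\Z/2\Z\times\Z/10\Z$ by the reduction. This contradicts Corollary \ref{noZ2Z10overmultiple}. Note that $K$ is real or imaginary according to the parity of $n$, so both branches of the reduction are needed.

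\emph{Part (ii).} For $d=-p_1$, the reduction plus Corollary \ref{prime12} gives the contradiction. For $d=p_1p_2$, $K$ is real, and the contradiction comes from Corollary \ref{twoprimes12}.

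\emph{Part (iii).} Here $d=-p$ with $p$ any prime. The reduction gives $E(\Q(\sqrt{-p}))_{\mathrm{tors}}\cong\Z/16\Z$, which Derickx \cite[Theorem 5.2]{derickx16} rules out for every prime.

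There is a notational clash: the $p$ in $d=-p$ need not be the $p$ of $K_{\Z_p}$. I would rename it and note that the argument never uses a relation between the two.

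\emph{Main obstacle.} The delicate point is the reduction itself in the imaginary case, specifically whether Theorem \ref{Z_p^2} really controls the $2$-primary part. Its proof cites \cite[Lemmas 4.3--4.5]{Omer2} for $2^\infty$- and $3^\infty$-torsion being defined over $K$. For the $16$-torsion in (iii), I would check this directly. The $2$-power division fields have Galois group of order $2^a3^b$ over $\Q$, and $\Gal(K_{\Z_p}/K)$ is pro-$p$ with $p>5$. Hence $\Q(E[2^\infty])\cap K_{\Z_p}=K$, so any point of order $16$ over $K_{\Z_p}$ is already over $K$. The same coprimality argument backs up the real case.
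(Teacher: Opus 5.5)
Your proposal is correct and follows the paper's proof: reduce to $E(K_{\Z_p})_{\mathrm{tors}} = E(K)_{\mathrm{tors}}$ using \cite[Theorem 1.1]{Omer2} for real $K$ (where $K_{\Z_p}=K_{\mathrm{cyc}}$) and Theorem \ref{Z_p^2} for imaginary $K$, then invoke Corollaries \ref{noZ2Z10overmultiple}, \ref{prime12}, \ref{twoprimes12} and \cite[Theorem 5.2]{derickx16}. Your extra coprimality check is sound, except that it gives $\Q(E[2^\infty])\cap K_{\Z_p}\subseteq K$ rather than $=K$; renaming the prime in part (iii) usefully fixes the paper's clash of notation.
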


\begin{proof}
From \cite[Theorem 1.1]{Omer2} and Theorem \ref{Z_p^2}, we have
\begin{equation*}
E(K_{\Z_p})_{\mathrm{tors}} = E(K)_{\mathrm{tors}}.
\end{equation*}
The conclusion then follows from Corollaries \ref{noZ2Z10overmultiple}, \ref{prime12}, \ref{twoprimes12}, and \cite[Theorem 5.2]{derickx16}.
\end{proof}

\bibliography{references}

\end{document}